\definecolor{darkblue}{rgb}{0,0,0.4}
\DeclareMathOperator*{\colim}{colim}
\DeclareMathOperator*{\tele}{telescope}
\DeclareMathOperator*{\cc}{\Box}
\newtheorem{theorem}{Theorem}[section] 
\newtheorem{lemma}[theorem]{Lemma}     
\newtheorem{corollary}[theorem]{Corollary}
\newtheorem{proposition}[theorem]{Proposition}
\theoremstyle{definition}
\newtheorem{definition}[theorem]{Definition}
\newtheorem{remark}[theorem]{Remark}
\newtheorem{question}[theorem]{Question}
\newtheorem{examples}[theorem]{Examples}
\newcommand{\Top}{{\bf CGTop}}
\newcommand{\lra}[1]{\langle #1 \rangle}
\newcommand{\srou}[2]{\underset{#2}{\overset{#1}{\maps}}}
\newcommand{\srlou}[2]{\underset{#2}{\overset{#1}{\longleftarrow}}}
\newcommand{\br}[1]{\left[ #1 \right]}
\renewcommand{\br}[1]{\left[ #1 \right]}
\newcommand{\bt}{\bullet}
\newcommand{\rh}{\widetilde{\Pi}} 
\newcommand{\ul}{\underline}
\newcommand{\bbC}{\mathbb{C}}
\newcommand{\N}{\mathbb{N}}
\newcommand{\bbN}{\mathbb{N}}
\newcommand{\bbR}{\mathbb{R}}
\newcommand{\bbZ}{\mathbb{Z}}
\newcommand{\bbQ}{\mathbb{Q}}
\newcommand{\ignore}[1]{}
\newcommand{\mC}{\mathcal{C}}
\newcommand{\qcd}{\bbQ cd}
\newcommand{\bS}{\mathbf{S}}
\newcommand{\leqs}{\leqslant}
\newcommand{\geqs}{\geqslant}
\newcommand{\heq}{\simeq}
\newcommand{\maps}{\longrightarrow}
\newcommand{\injects}{\hookrightarrow}
\newcommand{\homeo}{\cong}
\newcommand{\isom}{\cong}
\newcommand{\cross}{\times}
\newcommand{\wt}[1]{\widetilde{#1}} 
\newcommand{\Rdef}{R^{\mathrm{def}}}
\newcommand{\Rep}{\mathrm{Rep}}
\newcommand{\Hom}{\mathrm{Hom}}
\newcommand{\GL}{\mathrm{GL}}
\newcommand{\U}{\mathrm{U}}
\newcommand{\hU}{h\mathrm{U}}
\newcommand{\hGL}{h\mathrm{GL}}
\newcommand{\K}{K^{\mathrm{def}}}
\newcommand{\hofib}{\mathrm{hofib}}
\newcommand{\Stab}{\mathrm{Stab}}
\newcommand{\Map}{\mathrm{Map}}
\newcommand{\Gr}{\mathrm{Gr}}
\newcommand{\ku}{\mathbf{ku}}
\newcommand{\Susp}{\Sigma}
\newcommand{\Id}{\mathrm{Id}}
\newcommand{\xmaps}{\xrightarrow}
\newcommand{\srm}[1]{\stackrel{#1}{\maps}}
\newcommand{\srt}[1]{\stackrel{#1}{\to}}
\newcommand{\sm}{\wedge}
\newcommand{\goesto}{\mapsto}
\newcommand{\nd}{\noindent}
\newcommand{\ol}[1]{\overline{#1}}
\def\co{\colon\thinspace}
\newcommand{\Img}{\mathrm{Im}}
 \newcommand{\s}[1]{\vspace{#1 in}}
\newcommand{\e}{\emph}
 \newcommand{\ra}{\rangle}
\newcommand{\la}{\langle}
\newcommand{\defn}{\mathrel{\mathop :}=}
\title[The homotopy groups of a homotopy group completion]
 {The homotopy groups of a homotopy group completion}
\author{Daniel A. Ramras}
\subjclass[2010]{55R35 (primary), 14D20, 14F43 (secondary)}
\thanks{The author was partially supported by the Simons Foundation (\#279007)}
\begin{document}

\begin{abstract} Let $M$ be a topological monoid with homotopy group completion $\Omega BM$.
Under a strong homotopy commutativity hypothesis on $M$, we show that $\pi_k (\Omega BM)$ 
is the quotient of the monoid of free homotopy classes $[S^k, M]$ by its submonoid of nullhomotopic maps.
 
We give two  applications. First, this result gives a concrete description of the Lawson homology of a complex projective variety in terms of point-wise addition of spherical families of effective algebraic cycles.
Second, we apply this result to monoids built from the unitary, or general linear, representation spaces of discrete groups, leading to results about lifting continuous families of characters to continuous families of representations.
 \end{abstract}

\maketitle

\tableofcontents

\section{Introduction}
For each topological monoid $M$, there is a well-known isomorphism of monoids 
\begin{equation}\label{pi0}\pi_0 (\Omega BM)\isom \Gr (\pi_0 M),\end{equation}
where the left-hand side has the monoid structure induced by loop concatenation, and the right hand side is the Grothendieck group (that is, the ordinary group completion) of the monoid $\pi_0 M$.
Our main result extends this isomorphism to the higher homotopy groups of $\Omega BM$, under a strong homotopy commutativity condition on $M$. 

In order to state the result, we fix some terminology and notation. We will write $[X,Y]$ to denote the set of \e{unbased} homotopy classes of maps between spaces $X$ and $Y$. Note that if $M$ is a topological monoid, then $[X, M]$ inherits the structure of a monoid, and if $M$ is homotopy commutative then $[X, M]$ is abelian. Given a (discrete) abelian monoid $A$ and a submonoid $B \leqs A$, the quotient monoid is defined by $A/B := A/{\sim}$, where $a\sim a'$ if and only if there exist $b, b'\in B$ such that $a+b = a'+b'$. The 
natural surjective monoid homomorphism 
 $A\to A/B$ has the expected universal property, making it the cokernel of the inclusion $B\injects A$ (in the category of monoids).

\begin{theorem}  $\label{gen-thm-intro}$ Let $M$ be a 
 topological monoid such that for each $x\in M$, there exists a homotopy between the maps $(m,n) \goesto mn$ and $(m,n)\goesto nm$ that is constant on $(x,x)$.  Then for each $k > 0$, there is a natural isomorphism
\begin{equation} \label{Gamma-intro} [S^k, M]/\pi_0 M \srm{\isom} \pi_k (\Omega BM),\end{equation}
where $\pi_0 M \leqs[S^k, M]$ is the  submonoid of nullhomotopic maps $S^k\to M$.
\end{theorem}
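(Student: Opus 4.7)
The plan is to construct an inverse to a natural map $\Phi$ by identifying $\Omega BM$ with a telescope model built from right translations in $M$. Build $\Phi$ from the unit $\iota \co M \to \Omega BM$: since $\Omega BM$ is a grouplike H-space, translation canonically identifies its path components, giving $[S^k,\Omega BM] \isom \pi_0(\Omega BM) \times \pi_k(\Omega BM)$. Projecting onto the second factor yields a monoid homomorphism $[S^k,M] \to \pi_k(\Omega BM)$ (the pointwise and loop-concatenation products on $\pi_k(\Omega BM)$ agreeing by Eckmann--Hilton), and nullhomotopic maps vanish, so this factors as $\Phi \co [S^k,M]/\pi_0 M \to \pi_k(\Omega BM)$.

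To invert $\Phi$, pick a sequence $\{x_i\}_{i\geqs 1}$ in $M$ whose classes are cofinal in $\pi_0 M$, and form the telescope
\[ M_\infty := \tele\bigl( M \srm{\cdot x_1} M \srm{\cdot x_2} M \srm{\cdot x_3} \cdots \bigr). \]
Then $\pi_0 M_\infty \isom \Gr(\pi_0 M)$, so $M_\infty$ is grouplike; the strong pointed commutativity assumption equips $M_\infty$ with a compatible homotopy commutative H-space structure. The classical group completion theorem (Quillen, McDuff--Segal) makes $M_\infty \to \Omega BM$ a homology equivalence, and since both sides are then simple H-spaces with matching $\pi_0$, Whitehead's theorem for simple spaces promotes this to a weak equivalence. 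Meanwhile the telescope formula gives $\pi_k(M_\infty) \isom \colim_n \pi_k(M,b_n)$ with $b_n = x_1 \cdots x_n$; cofinality of $\{x_i\}$ together with the pointed commutativity identifies this colimit with $[S^k,M]/\pi_0 M$, by matching the colimit relation ``$f \cdot c_{x_{n+1} \cdots x_N} \simeq g \cdot c_{x_{n'+1} \cdots x_N}$'' with the quotient relation ``$f \cdot c_y \simeq g \cdot c_z$ for some $y,z \in M$''.

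The main obstacle is equipping $M_\infty$ with an H-space structure compatible with the map $M \to \Omega BM$; this is precisely where the strong \emph{pointed} commutativity enters, since homotopies constant on the diagonal point $(x,x)$ are exactly what is needed to glue the multiplications at various telescope stages into a coherent structure on $M_\infty$. The same pointed homotopies also furnish the basepoint-respecting data needed in the cofinality argument identifying the colimit with $[S^k, M]/\pi_0 M$.
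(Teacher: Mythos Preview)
Your strategy matches the paper's: define the comparison map via $M\to\Omega BM$ and loop-translation, model $\Omega BM$ by a translation telescope via McDuff--Segal, argue the telescope is simple so the homology equivalence upgrades to a weak equivalence, then identify the colimit of $\pi_k$'s with $[S^k,M]/\pi_0 M$. Two points in your outline are genuine gaps.

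First, you assume a cofinal \emph{sequence} $\{x_i\}$ exists in $\pi_0 M$, but the theorem has no countability hypothesis. The paper avoids this by first proving the result when $\pi_0 M$ has a cofinal \emph{cyclic} submonoid $\{m_0^n\}$ (so the telescope has constant bonding map $\cdot\, m_0$), and then running a filtered-colimit argument over the directed poset of finite subsets $F\subset\pi_0 M$: the submonoid $\ol{F}\leqs M$ spanned by the components in $F$ is stably group-like with respect to the product of the elements of $F$, and both $[S^k,-]/\pi_0$ and $\pi_k\Omega B(-)$ commute with this colimit (the latter requiring a detour through singular simplices to interchange $B$ with the colimit).

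Second, your plan to equip $M_\infty$ with an H-space structure is not clearly realizable: the telescope stages carry no monoid index that would let you define $(m,n)\cdot(m',n')$, and gluing level-wise multiplications coherently across the mapping cylinders would require higher homotopies beyond the single pointed commutativity you have. The paper does not attempt this. It proves directly that $M_\infty(m_0)$ is simple by showing that any loop $\xi$ in the telescope acts trivially on $\pi_k$: one expresses the action in terms of the $M$-action on $M_\infty(m_0)$ and uses the hypothesis (a homotopy $\bt\heq\bt\circ\tau$ constant at $(m_0^n,m_0^n)$) to produce a \emph{based} homotopy $m_0^n\bt\xi\heq\xi\bt m_0^n$, which is exactly what trivializes the action. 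Your intuition that pointed commutativity makes the telescope behave like an H-space is correct, but the argument runs through simplicity, not through an actual multiplication on $M_\infty$.
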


We provide an explicit description of the isomorphism (\ref{Gamma-intro}) in Section~\ref{Gamma}.
Our proof of Theorem~\ref{gen-thm-intro} utilizes the McDuff--Segal approach to group completion~\cite{McDuff-Segal}.
The argument in fact requires only a somewhat weaker homotopy commutativity condition on $M$;
see Theorem~\ref{gen-thm} and Proposition~\ref{univ}.
Note that this result \e{does not} hold when $k=0$, because $[S^0, M]/\pi_0 M \isom \pi_0 M$, whereas $\pi_0 (\Omega BM)$ is the group completion of $\pi_0 M$.

To  avoid point-set hypotheses on the monoid $M$,  we form the classifying $BM$ using the ``thick" geometric realization of the simplicial bar construction on $M$,
and we work in the category of compactly generated spaces. In particular, when  forming the bar construction we use the compactly generated topology on the product $M^n$ before taking geometric realization.\footnote{Note that if $M$ is a topological monoid that is not compactly generated, and $k(M)$ denotes $M$ with the compactly generated topology, then $k(M)$ is also a topological monoid under the same multiplication map. 
Moreover, the left-hand side of the isomorphism (\ref{Gamma-intro}) is unaffected by replacing $M$ by $k(M)$ (more generally, for any space $X$ the identity map $k(X)\to X$ is always a weak equivalence, since a function from a compact Hausdorff space to $X$ is continuous if and only if it is continuous as a map to $k(X)$).}
 In the examples considered in this paper, the inclusion of identity element into $M$ is 
 a closed cofibration (in fact, in our examples the identity element is
 an isolated point), which implies in particular that the degeneracy maps in the simplicial bar construction are all closed cofibrations.   This in turn implies that the natural map from $BM$   to the ``thin"  realization of the bar construction is  homotopy equivalence (this is originally due to Segal~\cite{Segal-cat-coh}; see also Wang~\cite{Wang-realization}).

Another formulation of the conclusion of Theorem~\ref{gen-thm-intro} 
is that $\pi_k (\Omega BM)$ is isomorphic to the quotient of Grothendieck groups
\begin{equation}\label{Gr}\dfrac{\Gr[S^k, M]}{\Gr (\pi_0 M)}.\end{equation}
In this formulation, we recover the isomorphism (\ref{pi0}) when $k=0$.\footnote{We note that that homotopy commutativity of $M$ is important here: for example, if $M=G$ is a non-abelian group, then $\Gr[S^0, G] \isom G\cross G$, and the subgroup of constant maps is the diagonal, which is \e{not} normal.}
We will see that for $k>0$, every element of $\pi_k (\Omega BM)$ can be represented by some spherical family $S^k \to M$, because
each element in $[S^k, M]$ is invertible modulo $\pi_0 M$. This allows us to state Theorem~\ref{gen-thm-intro} without reference to Grothendieck groups.

The proof of Theorem~\ref{gen-thm-intro} starts with the case where $\pi_0 M$ contains a cofinal cyclic submonoid generated by some $m_0\in M$. Then $\Gr  (\pi_0 M)$ is the colimit of the sequence $\pi_0 M \xmaps{\bullet m_0} \pi_0 M \xmaps{\bullet m_0} \cdots$, where $\bullet m_0$ denotes multiplication by $m_0$. In this case, Proposition~\ref{SGL-prop} shows that the corresponding infinite mapping telescope is a model for $\Omega BM$, a fact which may be of independent interest.

It is important to note that the isomorphism in Theorem~\ref{gen-thm-intro} \e{does not} hold for all homotopy commutative topological monoids, 
and in fact  the situation we are considering in this paper is in a sense orthogonal to many of the standard applications of group completion, in which Quillen's plus construction plays a critical role.
  For instance, let  
$$M = \coprod_P B \mathrm{Aut} (P),$$
where $R$ is a ring and 
  $P$ runs over a set of representatives for the isomorphism classes of finitely generated projective $R$--modules.  Direct sum makes $M$ a homotopy commutative topological monoid, and the homotopy groups $\pi_* (\Omega BM)\isom K_* (R)$ are the algebraic $K$--theory groups of $R$~\cite{McDuff-Segal}.  However,
$B \mathrm{Aut} (P)$ is the classifying space of the discrete group $\mathrm{Aut} (P)$, so for
$k\geqs 2$ we have $\pi_k (B \mathrm{Aut} (P)) = 0$ and hence every map $S^k\to M$ is nullhomotopic.  Thus the groups (\ref{Gr}) are zero for $k\geqs 2$, whereas $K_* (R)$ is in general quite complicated.

We consider two main applications of Theorem~\ref{gen-thm-intro}: to representation spaces of discrete groups and to Lawson homology of complex projective varieties. 
Given a discrete group $G$, consider the monoids
$$\Rep(G) = \coprod_{n=0}^\infty \Hom(G, \U(n)) \textrm{\,\,\,\, and \,\,\,\,}  \Rep(G)_{\hU} = \coprod_{n=0}^\infty \Hom(G, \U(n))_{\hU(n)}$$
formed from the  unitary representation spaces of $G$
and from their homotopy orbit spaces (Borel constructions)  with respect to the conjugation action of $U(n)$.  (The monoid operations are induced by block sum of matrices.) The homotopy groups $\pi_m \Omega B (\Rep(G)_{\hU})$ are the deformation $K$--theory groups of $G$ introduced by Carlsson and studied by T. Lawson~\cite{Lawson-prod, Lawson-simul} and the author~\cite{Ramras-excision, Ramras-crystal, Ramras-stable-moduli, Ramras-TAS}. Using Theorem~\ref{gen-thm-intro}, we show that for $m>0$
$$ \K_m (G) := \pi_m \left( \Omega B \Rep(G)_{\hU} \right) \isom  \left([S^m, \Rep(G)]/\pi_0 \Rep (G) \right) \oplus K^{-m} (*),$$
where $K^{-m} (*)$ is the complex $K$--theory of a point (see  Theorem~\ref{Kdef}, where a general linear version is also established).
This lends some additional justification to the term \e{deformation} $K$--\e{theory}, by showing that these groups parametrize continuous spherical families (``deformations") of representations. The results in this article are used in the author's preprint~\cite{Ramras-TAS} to give an explicit description of the \e{coassembly map} linking deformation $K$--theory to topological $K$--theory, and to study spaces of \e{flat connections} over the 3--dimensional Heisenberg manifold.

One can also apply Theorem~\ref{gen-thm-intro} to the abelian monoid 
$$\ol{\Rep}(G) = \coprod_{n=0}^\infty \Hom(G, \U(n))/U(n)$$ 
of \e{character varieties}; see Section~\ref{Rdef-sec}.
In Section~\ref{char-sec}, we use this approach to study the problem of lifting a (spherical) family of characters to a family of representations. For instance, in Proposition~\ref{prop:surf} we show that when $G$ is the fundamental group of a compact, aspherical surface, every map from a sphere into $\Hom(G, \U(n))/U(n)$ can be lifted, up to homotopy, to a map into $\Hom(G, \U(N))$ for some $N\geqs n$.

In the case of Lawson homology, our results give a direct description of these groups in terms of the spherical families of algebraic cycles (see Section~\ref{Lawson-sec}), lending concrete meaning to Friedlander's
suggestion that the Lawson homology of a complex projective variety $X$ ``can be naively viewed as the group of homotopy classes of $S^i$--parameterized families of $r$--dimensional algebraic cycles on $X$"~\cite[p. 55]{Friedlander-91}. 

In the Appendices, we give a shorter proof of Theorem~\ref{gen-thm-intro}  in the abelian case (using Quillen's approach to group completion~\cite[Appendix Q]{Friedlander-Mazur} instead of the McDuff--Segal approach) and we prove a

 \vspace{.1in} 
 
\section*{Notation and conventions}   

Throughout the paper, we work in the category of compactly generated spaces as in~\cite{ERW, Lewis-thesis} and 
we use the  \e{thick} geometric realization $||-||$ of simplicial spaces (and sets).

For based spaces $(X, x_0)$ and $(Y, y_0)$, 
the set of based homotopy classes of based maps will be  denoted by $\la (X, x_0), (Y, y_0) \ra = \langle X, Y \rangle$, while 
the set of unbased homotopy classes of unbased maps $X\to Y$ will be denoted  $[X, Y]$.

The constant map $X\to \{y\}$ will  be written $c_y$, or sometimes simply $y$, when $X$ is clear from context.
The path component of $x\in X$ is denoted by $[x]$, and we write $x\heq x'$ to indicate that $[x] = [x']$.  
The based homotopy class of a based map $\phi\co (X,x_0) \to (Y, y_0)$  is denoted by $\langle \phi \rangle$, and  its unbased homotopy class by $[\phi]$.  We write $\phi \heq \phi'$ to indicate that $[\phi] = [\phi']$. 

For $m\geqs 1$, we view $\pi_m (X, x_0)$ as the group $\la (S^m, 1), (X, x_0)\ra$, with multiplication $\cc$ defined via concatenation in the first coordinate of 
$$(I^m/\partial I^m, [\partial I^m]) \homeo (S^m, 1).$$
For $\phi \co (S^m, 1) \to (X, x_0)$ with $m\geqs 1$,  we let 
$\overline{\phi}$ denote the reverse of $\phi$ with respect to the first coordinate of $I^m$ (so that $\langle \overline{\phi}\rangle   = 
\langle  \phi  \rangle^{-1}$ in $\pi_m (X, x_0)$).   
 
\section*{Acknowledgements}\label{ackref}
The results in this article had a long genesis, beginning with conversations among the author, Willett, and Yu in 2010 surrounding the work in~\cite{RWY}.
Weaker versions of the main result were included in a  draft of the author's joint paper with Tom Baird~\cite{Baird-Ramras-arxiv-v3}, with different arguments. Another version of these results, closer to the present version, appeared in the author's preprint~\cite{Ramras-TAS}.  
The author thanks Baird, Willett, and Yu, as well as  Peter May, for helpful discussions. Comments from the referees helped improve the exposition.

\section{The homotopy group completion}\label{htpy-sec}

In this section we formulate  our main result (Theorem~\ref{gen-thm}) describing the homotopy groups of the homotopy group completion for certain topological monoids.  This result applies to all (strictly) abelian monoids, and to the monoids of representations defined in the Introduction (see Section~\ref{Kdef-sec}).  
 
We begin by describing the general context.
Throughout, $M$ denotes a topological monoid, with monoid operation $(m,n)\goesto m \bullet n$
 (so $\bullet$ is continuous, associative, and there exists a strict identity element in $e\in M$).  We define
 $$m^k = \underbrace{m \bt \cdots \bt m}_k,$$ 
and by convention  $m^0 = e$. 
All of our results will require $M$ to be homotopy commutative, in the sense that there exists a (possibly unbased) homotopy between $\bt$ and $\bt\circ \tau$, where 
$$\tau \co M\cross M\to M\cross M$$
is the twist map $\tau (m,n) = (n,m)$.
The  \e{classifying space} of $M$, denoted $BM$, is the (thick) geometric realization of the topological category $\underline{M}$ with one object $*$ and with morphism space $M$.  Composition in $\underline{M}$ is given by $m\circ n = m\bullet n$.  The space $BM$ has a natural basepoint $*$ corresponding to the unique object in $\underline{M}$.  
We note that the nerve $N. \ul{M}$, which is the simplicial space underlying $BM$, is the \e{simplicial bar construction} on $M$, and has the form $[n]\goesto M^n$.
The  \e{homotopy group completion} of $M$ is the based loop space $\Omega BM$.  

There is a natural map 
$$\gamma \co M\to \Omega BM,$$
adjoint to the natural map $S^1 \sm M \to BM$ (see Section~\ref{Gamma} for further discussion of this map), and it is a standard fact that  
$\gamma$ induces an isomorphism 
\begin{equation}\label{Gr2} \Gr (\pi_0 M) \srm{\isom} \pi_0 ( \Omega BM),\end{equation}
 where $\Gr$ denotes the Grothendieck group.

  In this section, we present conditions on $M$  under which the higher homotopy groups of $\Omega BM$ can be described in a manner analogous to (\ref{Gr2}).
 Given a space $X$, the space $\Map(X, M)$ is a topological monoid, whose monoid operation  we denote simply by $\bullet$.  
If $M$ is homotopy commutative, which we assume from here on, then $[X,M]$ is an abelian monoid and $\Gr [X,M]$ is an abelian group, so we will use additive notation $+$ and $-$.

Choosing a basepoint $x\in X$ gives a monoid homomorphism
$$[X, M] \maps \pi_0 (M)$$
defined by restriction to $x$.  (We will usually view $\pi_0$ as the functor taking a space to its set of path components. For a based space $(X, x_0)$, we will sometimes use the isomorphic functor $X\goesto \pi_0 (X, x_0) = \la S^0, X\ra$.) This homomorphism is split by the homomorphism
$$\pi_0 (M) \to [X,M]$$
sending each path component to the homotopy class of a constant map into that component.  Hence $[X, M]$ contains a copy of $\pi_0 (M)$ (consisting of nullhomotopic maps), and it follows that $\Gr[X, M]$ contains a copy of $\Gr(\pi_0 M)$ as a direct summand (consisting of formal differences between nullhomotopic maps).

\begin{definition}$\label{a-d}$ For $k\geqs 0$, let $\Pi_k (M)$ denote the Grothendieck group  $\Gr [S^k , M]$, and define
$$\rh_k (M) = \Pi_k (M)/\Gr(\pi_0 M).$$
\end{definition}

Note that we have
  natural direct sum decompositions 
$$\Pi_k (M) \isom \rh_k (M) \oplus \Gr(\pi_0 M),$$ 
with $\rh_k (M)$ corresponding to the subgroup consisting of those formal differences $[\phi] - [\psi]$ for which $\phi(1)$ and $\psi(1)$ lie in the same path component of $M$.  Note also that if $\Gr(\pi_0 M)$ is abelian, then there is a natural isomorphism 
\begin{equation}\label{eq:k=0}\rh_0 (M) \isom \Gr(\pi_0 M),\end{equation}
induced by sending the class represented by $f\co S^0 = \{\pm 1\} \to M$ to $[f(-1)] - [f(1)]$.

In Section~\ref{Gamma}, we will construct a natural map 
$$\wt{\Gamma} \co   \rh_k (M) \maps \pi_k (\Omega BM),$$
and we will show that $\wt{\Gamma}$ is an isomorphism under certain conditions on $M$.  We now explain these conditions.

We will need to consider the action of $\pi_1 (M, m)$ on $\pi_k (M, m)$ ($k\geqs 1$).  We use the conventions in Hatcher~\cite[Section 4.1]{Hatcher}, so that this is a \e{left} action.  We note that if $[m]$ is invertible in $\pi_0 (M)$, then this action is trivial (this follows from~\cite[Example 4A.3]{Hatcher}, for instance, which shows that the identity component of an $H$--space is always simple).

\begin{definition}\label{a-def} Consider a topological monoid $M$ and a natural number $k\geqs 1$.  Given $m\in M$, let $[m]\subset M$ be its path component, viewed as a subspace of $M$.
We say that  $m$ is   $k$--\e{anchored} if there exists a homotopy 
$$H \co [m]\cross [m]\cross I\to [m^2]$$
 such that $H_0 = \bt$, $H_1 = \bt \circ \tau$, 
and the loop $\eta(t) = H (m, m, t)$ acts trivially on $\pi_k (M, m^{2})$.  When $k$ is clear from context, we will refer to $H$ as a \e{homotopy anchoring} $m$.  We say that $m$ is \e{strongly} $k$--anchored if there are infinitely many $n\in \bbN$ for which 
$m^n$ is $k$--anchored.

We say that $m$ is (strongly) \e{anchored} if it is (strongly) $k$--anchored for all $k\geqs 1$.
We say that a path component $C$ of $M$ is (strongly) $k$--anchored (or anchored) if there exists an element 
  $m\in C$ that is (strongly) $k$--anchored (respectively, anchored).
  
To simplify some subsequent statements, all $m\in M$ are considered to be (strongly) $0$--anchored.
\end{definition}

\begin{remark} It is an elementary exercise to check that if $m_0\heq m_1$ in $M$, then $m_0$ is (strongly) $k$--anchored if and only if $m_1$ is (strongly) $k$--anchored.
\end{remark}

\begin{examples}  If $M$ is (strictly) abelian, then every element of $M$ is  strongly anchored, since we can take $H$ to be the constant homotopy.

If $M$ is homotopy commutative, then every path component of $M$ with abelian fundamental group  is 1--anchored (since the action of $\pi_1$ on itself is conjugation). If $M$ is homotopy commutative and every path component of $M$ is a simple space (e.g. if $\pi_0 (M)$ is a group, or if all components are simply connected), then every element in $M$ is strongly anchored.
\end{examples}

We will see more interesting examples in Section~\ref{Kdef-sec}.

\begin{remark}$\label{wa-rmk}$ In \cite{Ramras-excision}, an element $m_0\in M$ is called  anchored  if all powers of $m_0$ are anchored and the loops $\eta$ described in Definition~\ref{a-def} are all \e{constant}.
Lemma~\ref{simple} below provides a strengthening of \cite[Corollary 3.14]{Ramras-excision}, and shows that the results in that article hold if one simply requires   $m_0$ to be strongly $1$--anchored in the sense defined above $($see also \cite[Remark 3.7]{Ramras-excision}$)$.  
\end{remark}

In order to motivate the construction of the map $\wt{\Gamma}$ in Section~\ref{Gamma}, we now formulate the strongest version of  our main result.

Recall that a subset $S$ of a monoid $N$ is called \e{cofinal} if for each $n\in N$ there exists $n'\in N$ such that $n\bt n' \in S$.

\begin{theorem}  $\label{gen-thm}$ Let $M$ be a  
homotopy commutative  
 topological monoid such that the subset of strongly 1--anchored components 
 and the subset of strongly $k$--anchored components 
are both cofinal in $\pi_0 (M)$ (for some $k\geqs 0$). Then 
the natural map
$$\wt{\Gamma}\co \rh_k (M) \maps \pi_k (\Omega BM)$$
is an isomorphism.
\end{theorem}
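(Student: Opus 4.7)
The plan is to reduce to the case when $\pi_0 M$ contains a cofinal cyclic submonoid generated by a suitably anchored element, use a mapping telescope model for $\Omega BM$ in that case, and then identify the resulting colimit of based homotopy groups with $\rh_k(M)$. The case $k = 0$ reduces to the classical isomorphism (\ref{Gr2}) via the direct-sum decomposition (\ref{eq:k=0}), so I focus on $k \geqs 1$.

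First, using the two cofinality hypotheses, I would choose a single $m_0 \in M$ whose powers $\{[m_0^n]\}$ are cofinal in $\pi_0 M$ \emph{and} such that infinitely many $m_0^N$ are simultaneously strongly $1$-anchored and strongly $k$-anchored; such an $m_0$ is obtained by multiplying a cofinal strongly $1$-anchored representative with a cofinal strongly $k$-anchored representative. Proposition \ref{SGL-prop} then provides a weak equivalence
$$M_\infty := \tele\bigl(M \srm{\bt m_0} M \srm{\bt m_0} \cdots\bigr) \srm{\heq} \Omega BM,$$
the strong $1$-anchoring being used to upgrade the McDuff--Segal homology equivalence into a weak equivalence by forcing the $\pi_1$-action on higher homotopy of $M_\infty$ to be trivial.

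Next, since homotopy groups commute with sequential mapping telescopes, $\pi_k(\Omega BM) \isom \colim_n \pi_k(M, m_0^n)$, with transition maps induced by $\bt m_0$. The strong $k$-anchoring along a cofinal set of exponents $N$ makes these transition maps canonical homomorphisms of abelian groups, independent of the path used to shift the basepoint through $[m_0^N]$, because the swap-homotopies act trivially on $\pi_k(M, m_0^{2N})$ by definition of $k$-anchored. Under this identification, the natural map $\wt\Gamma$ from Section \ref{Gamma} corresponds to sending a based class $[\phi]\in \pi_k(M, m_0^n)$ to the difference $[\phi] - [c_{m_0^n}]\in \rh_k(M)$, and surjectivity follows because every class in $\rh_k(M)$ assumes this form after stabilizing $n$.

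The hardest step, and the main obstacle, is injectivity. Suppose $[\phi] - [c_{m_0^n}] = 0$ in $\rh_k(M)$; then in the abelian monoid $[S^k, M]$ one has $[\phi] + [c_a] = [c_b] + [c_{m_0^n}]$ for some $a, b \in M$, and multiplying through by a sufficiently large $m_0^N$ produces a \emph{free} nullhomotopy of $\phi \bt c_{m_0^{n+N}}$. Converting this into a \emph{based} nullhomotopy at $m_0^{n+N}$, so that $[\phi]$ dies in the telescope colimit, is precisely where strong $k$-anchoring is indispensable: it tames the $\pi_1$-orbit obstruction and lets one rebase the homotopy at the cost of multiplying by at most one further power of $m_0$. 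A secondary technical point is the Eckmann--Hilton check that the $\bt$-product on $\pi_k$ agrees with the usual concatenation group structure, but this is routine for any $H$-space.
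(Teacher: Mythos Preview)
Your proposal has a genuine gap at the very first step. You claim to ``choose a single $m_0 \in M$ whose powers $\{[m_0^n]\}$ are cofinal in $\pi_0 M$,'' but no such element need exist. The hypothesis of Theorem~\ref{gen-thm} is only that the \emph{set} of strongly $1$--anchored components (respectively strongly $k$--anchored components) is cofinal in $\pi_0(M)$; it does \emph{not} say that $M$ is stably group-like with respect to any single element. For a concrete obstruction, take $\pi_0(M)$ to be a free abelian monoid on infinitely many generators (e.g.\ the monoid $\mC_m(X)$ of effective cycles in Section~\ref{Lawson-sec} can have this feature): then every element involves only finitely many generators, so no cyclic submonoid is cofinal. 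Your sentence ``such an $m_0$ is obtained by multiplying a cofinal strongly $1$--anchored representative with a cofinal strongly $k$--anchored representative'' conflates cofinality of a subset with some property of an individual element, and does not produce the required $m_0$.

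What you have actually sketched is the proof of Proposition~\ref{sgl}, which is the special case in which $M$ \emph{is} stably group-like with respect to a strongly $1$--anchored element. The paper's proof of Theorem~\ref{gen-thm} bridges the gap between Proposition~\ref{sgl} and the general statement by a colimit argument: one considers the directed system of submonoids $\ol{F}\leqs M$ indexed by finite subsets $F\subset \pi_0(M)$, observes that each $\ol{F}$ is stably group-like with respect to the product $\sigma(F)$ of the elements of $F$, and applies Proposition~\ref{sgl} to each $\ol{F}$. One then has to check that both $\rh_k(-)$ and $\pi_k(\Omega B-)$ commute with this filtered colimit; the latter is nontrivial because $B$ does not obviously commute with such colimits of spaces, and the paper handles it by passing to the bisimplicial set $N_{\bullet\bullet}(S_\bullet M)$, where colimits of simplicial sets interact well with homotopy groups of realizations. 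Your outline is fine once you are inside a single $\ol{F}$, but you are missing this entire reduction-and-colimit layer.
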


We note that for $k=0$, the assumption on 1--anchored components in Theorem~\ref{gen-thm} is unnecessary (and the proof given below does not use this assumption when $k=0$). In fact, 
for every topological monoid $M$ the natural map $M\to \Omega BM$ induces an isomorphism $\Gr (\pi_0(M)) \isom \pi_0 (\Omega BM$),
and it follows from (\ref{eq:k=0}) that the map 
$$\wt{\Gamma}\co \rh_0 (M) \maps \pi_0 (\Omega BM)$$
is an isomorphism so long as $\Gr(\pi_0 M)$ is abelian. 

\

We end this section by establishing a helpful universal property of the natural map
\begin{equation}\label{pi}[S^k, M] \srm{i} \Gr[S^k, M] \srm{q} \rh_k (M),\end{equation}
where $i$ is the universal map from the monoid $[S^k, M]$ to its group completion, and $q$ is the quotient map.  We denote the composite (\ref{pi}) by $\pi$.

\begin{definition} Let $(A, +)$ be a (discrete) abelian monoid, and let $A'\leqs A$ be a submonoid. 
Define an equivalence relation $\sim$ on $A$ by $a_1 \sim a_2$ if and only if there exist $a'_1, a'_2 \in A'$ such that $a_1+a'_1 = a_2 +a'_2$.
The quotient monoid $A/A'$ is defined to be the quotient set $A/\!\sim$ with operation  $[a_1] + [a_2] = [a_1+a_2]$.
\end{definition}

The fact that $\sim$ is an equivalence relation follows from the assumption that $A$ is abelian, and it is immediate that $(A/A', +)$ is a monoid. Furthermore, the quotient map $\pi\co A\to A/A'$ is a monoid map, with the expected universal property: if $P$ is a (not necessarily abelian) monoid with identity element $0\in P$, and $f\co A\to P$ is a map of monoids satisfying $f(A') = 0$, then there exists a unique monoid map 
$\ol{f}\co A/A'\to P$ satisfying $\ol{f} \circ \pi = f$:
$$\xymatrix{ A' \ar[r] \ar@/^1.5pc/[rrr]^0 & A \ar[rr]^f \ar@{->>}[dr]^{\pi} & & P.\\
& & A/A' \ar@{-->}[ur]^{\exists !}}$$

\begin{proposition}$\label{univ}$ Fix $k\geqs 1$, and
let $M$ be a homotopy commutative topological monoid in which the subset of $k$--anchored components is cofinal.  Then 
  $$\pi \co [S^k, M] \to \rh_k (M)$$
is surjective, and in fact 
satisfies the universal property of the quotient map for the submonoid $\pi_0 M \leqs [S^k, M]$.
Consequently,
$$\rh_k (M)\isom [S^k, M]/\pi_0 M,$$
and
$\pi ([\phi]) = \pi ([\psi])$ if and only if there exist constant maps   $c, d\co S^k\to  M$ 
such that $\phi \bt c \heq \psi \bt d$.
\end{proposition}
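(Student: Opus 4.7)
The plan is to reduce the proposition to three claims: (a) $\pi$ is surjective; (b) $\pi$ vanishes on the submonoid $\pi_0 M \leqs [S^k, M]$; and (c) if $\pi([\phi]) = \pi([\psi])$, then $\phi \bt c \heq \psi \bt d$ for some constant maps $c,d\co S^k \to M$.  Claim (b) is immediate, since a nullhomotopic $f$ is homotopic to some constant $c_m$, whose class in $\Gr[S^k, M]$ lies in $\Gr(\pi_0 M)$.  Granting (a)--(c), the universal property of $\pi$ follows formally: a monoid map $f\co [S^k,M]\to P$ vanishing on $\pi_0 M$ is forced by (c) to be constant on the fibers of $\pi$ (since $f$ kills the constants $c$ and $d$), so it factors through a unique $\bar f\co \rh_k(M) \to P$, with uniqueness coming from the surjectivity (a).  The isomorphism $\rh_k(M) \isom [S^k, M]/\pi_0 M$ and the ``if and only if'' description of $\pi$--fibers then follow by comparing universal properties.

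The main obstacle is an Eckmann--Hilton--style sub-lemma, which is where the $k$--anchored hypothesis is used: if $[n]\subset M$ is a $k$--anchored component and $\phi\co S^k\to M$ has image in $[n]$ with $\phi(1) = n$, then $\phi\bt\overline\phi \heq c_{n^2}$ in $[S^k,M]$.  To prove it I would exploit the interchange identity $(f\cc g)\bt(f'\cc g') = (f\bt f')\cc(g\bt g')$ between $\bt$ and loop concatenation $\cc$ in the first coordinate of $I^k$.  Unit-law based homotopies $\phi \heq \phi\cc c_n$ and $\overline\phi \heq c_n\cc\overline\phi$ produce a based homotopy $\phi \bt \overline\phi \heq (\phi \bt c_n)\cc(c_n \bt \overline\phi)$.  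The anchoring homotopy $H$ then provides a free homotopy $\phi\bt c_n \heq c_n\bt\phi$ whose trace at $1\in S^k$ is the loop $\eta(t) = H(n,n,t)$; the assumption that $\eta$ acts trivially on $\pi_k(M, n^2)$ promotes this to an equality of based homotopy classes.  Substituting, using that left multiplication by $n$ commutes with $\cc$ and the standard nullhomotopy $\phi\cc\overline\phi \heq c_n$ in $\pi_k(M,n)$, one concludes that $\phi\bt\overline\phi \heq c_{n^2}$ as based, and hence unbased, maps.

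With the sub-lemma in hand, surjectivity (a) is straightforward.  Given $[\phi]_G - [\psi]_G \in \rh_k(M)$, cofinality of $k$--anchored components yields constants $c_1, c_2\co S^k \to M$ such that $\phi_+(1)$ and $\psi_+(1)$ lie in $k$--anchored components, where $\phi_+ := \phi\bt c_1$ and $\psi_+ := \psi\bt c_2$; since $S^k$ is connected, the images of $\phi_+$ and $\psi_+$ lie in those anchored components.  Because $\pi$ kills constants, $\pi([\phi]) = \pi([\phi_+])$ and $\pi([\psi]) = \pi([\psi_+])$, while the sub-lemma yields $\pi([\overline{\psi_+}]) = -\pi([\psi_+])$.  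Hence $[\phi]_G - [\psi]_G$ and $\pi([\phi_+\bt\overline{\psi_+}])$ coincide in $\rh_k(M)$, providing a preimage in $[S^k,M]$.

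For claim (c), assume $\pi([\phi]) = \pi([\psi])$; unpacking $[\phi]_G - [\psi]_G \in \Gr(\pi_0 M)$ via the standard equality relation in the Grothendieck group of an abelian monoid yields constants $c_1, c_2$ and some $\alpha \in [S^k, M]$ with $\phi\bt c_2\bt\alpha \heq \psi\bt c_1\bt\alpha$.  Invoking cofinality once more, choose a constant $c_3$ so that $\alpha_+ := \alpha\bt c_3$ has image in a $k$--anchored component.  Multiplying both sides of the equation by $c_3$ and then by $\overline{\alpha_+}$, and invoking the sub-lemma to replace $\alpha_+\bt\overline{\alpha_+}$ by a constant up to homotopy, reduces the equation to $\phi\bt c \heq \psi\bt d$ for constants $c, d$.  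The converse direction is immediate from the definition of $\pi$, completing the plan.
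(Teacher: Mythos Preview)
Your proposal is correct and follows essentially the same approach as the paper: the key Eckmann--Hilton sub-lemma (that $\phi\bullet\overline{\phi}$ is nullhomotopic when $\phi(1)$ is $k$--anchored) and the cancellation argument via multiplying by $\overline{\alpha_+}$ match the paper's Lemma~\ref{EH} and the well-definedness step in its proof of Proposition~\ref{univ}. The only organizational difference is that you first prove the fiber characterization (c) and deduce the universal property, whereas the paper proves well-definedness of $\overline{f}$ directly and derives the fiber description as a consequence; these are equivalent rearrangements of the same argument.
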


For the proof of Proposition~\ref{univ}, we need a version of the Eckmann--Hilton argument, and first we record a basic fact regarding the action of the fundamental group on higher homotopy.

\begin{lemma}\label{act}  Consider a $($not necessarily based$)$ homotopy $\alpha_s$ of maps 
$$(S^k, 1) \to (X, x_0)$$ 
$(k\geqs 1)$, and let 
$\eta (t) = \alpha_t (1)$ be the track of this homotopy on the basepoint $1\in S^k$.  Then $\la \alpha_0 \ra = \la \eta \ra  \cdot \la \alpha_1\ra$ in $\pi_k (X, \alpha_0 (1))$.
\end{lemma}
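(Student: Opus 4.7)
The plan is to first unpack what $\langle \eta \rangle \cdot \langle \alpha_1 \rangle$ means in the statement. Following Hatcher's convention, the path $\eta$ from $\alpha_0(1)$ to $\alpha_1(1)$ induces a change-of-basepoint isomorphism $\beta_\eta \co \pi_k(X, \alpha_1(1)) \to \pi_k(X, \alpha_0(1))$, and I would read $\langle \eta \rangle \cdot \langle \alpha_1 \rangle$ as $\beta_\eta(\langle \alpha_1 \rangle)$. When $\alpha_0(1) = \alpha_1(1)$ this specializes to the usual left action of $\pi_1$ on $\pi_k$. So the task is to produce a based homotopy from $\alpha_0$ to some chosen representative of $\beta_\eta(\langle \alpha_1 \rangle)$.

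For that representative I would use the standard ``collar'' model. Identify $(S^k, 1)$ with $(I^k/\partial I^k, [\partial I^k])$ and fix a shrunken cube $J \subset I^k$, for example $J = [1/3, 2/3]^k$. Define $\eta \cdot \alpha_1 \co (S^k, 1) \to (X, \alpha_0(1))$ by rescaling $I^k$ linearly onto $J$ and applying $\alpha_1$ there, while on the collar $I^k \setminus J$ we follow $\eta$ in reverse, parametrized radially so that the inner boundary $\partial J$ maps to $\alpha_1(1) = \eta(1)$ and the outer boundary $\partial I^k$ maps to $\alpha_0(1) = \eta(0)$. By construction $\langle \eta \cdot \alpha_1 \rangle = \beta_\eta(\langle \alpha_1 \rangle)$ in $\pi_k(X, \alpha_0(1))$.

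Next I would construct a based homotopy $F \co I^k \times I \to X$ from $\alpha_0$ to $\eta \cdot \alpha_1$ with $F(\partial I^k \times I) = \{\alpha_0(1)\}$. At time $t$, let the inner cube $J_t$ interpolate linearly between $J_0 = I^k$ and $J_1 = J$. On $J_t$, linearly reparametrize to $I^k$ and apply $\alpha_t$ (using the given homotopy!); on the collar $I^k \setminus J_t$, follow $\eta$ reversed, with the time parameter depending radially on the distance from $\partial I^k$ to $\partial J_t$, so that $\partial I^k \mapsto \eta(0) = \alpha_0(1)$ and $\partial J_t \mapsto \eta(t) = \alpha_t(1)$. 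At $t = 0$ the collar is empty and $F$ is $\alpha_0$; at $t = 1$ we recover $\eta \cdot \alpha_1$ as defined above. The two pieces agree on $\partial J_t$ because both equal $\alpha_t(1)$.

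The main obstacle is purely bookkeeping: writing the piecewise formula for $F$ so that it is continuous across $\partial J_t$ for every $t$ and correctly recovers $\alpha_0$ and $\eta \cdot \alpha_1$ at the endpoints, while also checking that the convention for $\beta_\eta$ matches the conventions the paper has imported from Hatcher (this controls whether one uses $\eta$ or its reverse on the collar). The conceptual content is the standard fact — essentially the proof of the change-of-basepoint formula in Hatcher's Section~4.1 — that a free homotopy of based maps represents, via its track on the basepoint, the change-of-basepoint isomorphism applied to the target class. Once $F$ is written down, the lemma follows at once: $\langle \alpha_0 \rangle = \langle \eta \cdot \alpha_1 \rangle = \beta_\eta(\langle \alpha_1 \rangle) = \langle \eta \rangle \cdot \langle \alpha_1 \rangle$ in $\pi_k(X, \alpha_0(1))$.
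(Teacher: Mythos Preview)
Your proposal is correct and is essentially the same argument as the paper's: both construct the based homotopy by, at time $s$, shrinking $\alpha_s$ onto a concentric cube whose side length decreases linearly with $s$ and filling the collar radially with the partial path $\eta|_{[0,s]}$. The paper packages this more tersely by first defining an operation $\gamma\cdot\alpha$ for paths $\gamma\co [0,s]\to X$ of variable length (with the inner cube having side length $1-s/3$) and then writing the desired homotopy in one line as $s\mapsto \eta|_{[0,s]}\cdot\alpha_s$, but your interpolating-cubes description unwinds to exactly the same map.
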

\begin{proof} In general, the action of $\pi_1 (X, x_0)$ on $\pi_k (X, x_0)$ is induced by an operation which takes in a map $\gamma\co [0,s]\to X$ (for some $s\in [0,1]$)
 and a map $\alpha \co (I^k, \partial I^k) \to (X, \gamma(s))$ and produces a map 
$$\gamma \cdot \alpha\co (I^k, \partial I^k) \to (X, \gamma(0))$$ 
defined by shrinking the domain of $\alpha$ to a concentric cube  $C\subset I^k$ of side length $1-s/3$ and filling in the path $\gamma$ on each radial segment connecting $\partial C$ to $\partial I^k$ (compare with Hatcher~\cite[Section 4.1]{Hatcher}, for instance).
In this language, the desired homotopy is simply
$s\goesto \eta|_{[0,s]} \cdot \alpha_s$.
\end{proof}

\begin{lemma}$\label{EH}$
Fix $k\geqs 1$ and 
let $M$ be a  topological monoid and let $m\in M$ be $k$--anchored.
Then for any $\phi, \psi\co S^k \to M$ with $\phi(1) = \psi(1) = m$ we have a based homotopy
$$ \phi   \bullet \psi   \heq   ( \phi \bullet  c_m   )\cc ( \psi \bt c_m  ) = (\phi \cc \psi) \bt c_m.$$
In  particular, setting $\psi = \ol{\phi}$, we obtain based homotopies
$$\phi \bt \overline{\phi}  \heq \overline{\phi}   \bt   \phi   \heq  c_{m^{2}}.$$
\end{lemma}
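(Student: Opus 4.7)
The plan is to carry out an Eckmann--Hilton--style argument, using the anchoring datum precisely once to swap two ``middle'' factors. First, the standard based homotopies $\phi \heq \phi \cc c_m$ and $\psi \heq c_m \cc \psi$ (as based maps $(S^k, 1) \to (M, m)$) combine pointwise under $\bt$ to produce a based homotopy
$$\phi \bt \psi \heq (\phi \cc c_m) \bt (c_m \cc \psi)$$
of maps $(S^k, 1) \to (M, m^2)$. Since $\bt\co M\cross M\to M$ is a continuous binary operation, a direct check on each half of the first $I^k$--coordinate yields the strict interchange identity
$$(\phi \cc c_m) \bt (c_m \cc \psi) = (\phi \bt c_m) \cc (c_m \bt \psi).$$

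The crucial step, and the only one that uses the anchoring hypothesis, is to produce a based homotopy $c_m \bt \psi \heq \psi \bt c_m$. Since $k \geqs 1$, $S^k$ is connected and $\psi(S^k) \subseteq [m]$, so I can define an unbased homotopy $G\co S^k \cross I \to [m^2]$ by
$$G(x, t) = H(m, \psi(x), t),$$
where $H$ is the chosen anchoring homotopy. Then $G_0 = c_m \bt \psi$, $G_1 = \psi \bt c_m$, and the basepoint track is $G(1, t) = H(m,m,t) = \eta(t)$. By Lemma~\ref{act} this gives $\la c_m \bt \psi \ra = \la \eta \ra \cdot \la \psi \bt c_m \ra$ in $\pi_k(M, m^2)$, and the $k$--anchored hypothesis was crafted precisely so that $\la \eta \ra$ acts trivially; hence the two classes coincide, yielding the desired based homotopy. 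This is the main obstacle: anchoring is exactly what promotes the unbased homotopy arising from homotopy-commutativity to a based one.

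Concatenating the three homotopies gives $\phi \bt \psi \heq (\phi \bt c_m) \cc (\psi \bt c_m)$, and the final strict equality $(\phi \bt c_m) \cc (\psi \bt c_m) = (\phi \cc \psi) \bt c_m$ is another direct check by cases on the first coordinate. For the ``in particular'' clause, take $\psi = \overline{\phi}$: the standard null-homotopy $\phi \cc \overline{\phi} \heq c_m$ in $\pi_k(M, m)$ gives $(\phi \cc \overline{\phi}) \bt c_m \heq c_m \bt c_m = c_{m^2}$, so $\phi \bt \overline{\phi} \heq c_{m^2}$; applying the main assertion with the roles of $\phi$ and $\overline{\phi}$ swapped handles $\overline{\phi} \bt \phi$.
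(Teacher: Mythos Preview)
Your proof is correct and follows essentially the same route as the paper's: both use the interchange identity to reduce to showing $\la c_m \bt \psi\ra = \la \psi \bt c_m\ra$, then invoke the anchoring homotopy $H$ together with Lemma~\ref{act} to conclude that $\la \eta\ra$ acts trivially. You are slightly more explicit (writing out $G(x,t)=H(m,\psi(x),t)$ and noting $\psi(S^k)\subset [m]$), but the argument is the same.
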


\begin{proof}  Just as in the ordinary Eckmann--Hilton argument, the point is that $\bt$ is a homomorphism 
$$\pi_k (M, m) \cross \pi_k (M, m) \srm{\bt} \pi_k (M, m^2).$$
The relevant equation holds on the nose, not just up to homotopy: for all maps $\alpha, \beta, \alpha', \beta'\co S^k \to M$ satisfying
$\alpha(1) = \beta(1)$ and $\alpha'(1) = \beta'(1)$,
$$(\alpha\cc \beta) \bt (\alpha' \cc \beta') = (\alpha \bt \alpha') \cc (\beta \bt \beta').$$
Hence we have:
$$\phi \bt \psi \heq (\phi \cc c_m) \bt (c_m \cc \psi) = (\phi \bt c_m) \cc (c_m \bt \psi).
$$

To complete the proof, it suffices to show that $\la c_m \bt \psi \ra = \la \psi \bt c_m \ra$.
Let $H$ be a homotopy anchoring $m$, and set $\eta (t) = H(m, m, t)$.
By Lemma~\ref{act},   
$$\la c_m \bt \psi \ra  = \la \eta \ra \cdot \la \psi  \bt  c_m\ra,$$
and  since $H$ anchors $m$, we have $ \la \eta \ra \cdot \la \psi  \bt  c_m\ra  =    \la  \psi  \bt  c_m\ra$.\end{proof}

\begin{proof}[Proof of Proposition~\ref{univ}] 
Each element in $\Gr[S^k, M]$ has the form $[\phi]-[\psi]$ for some $\phi, \psi\co S^k \to M$.  By assumption, there exists $m\in M$ such that $\psi(1) \bt m$ is $k$--anchored.  Adding $[c_m]$ to both $[\phi]$ and $[\psi]$ if necessary, we may assume that $\psi(1)$ is $k$--anchored. 

By Lemma~\ref{EH}, $\psi\bt\ol{\psi}$ is nullhomotopic, so the element
$$[\phi]-[\psi] = [\phi\bt\ol{\psi}] - [\psi\bt\ol{\psi}] \in \Gr [S^k, M]$$
 is equivalent, modulo $\Gr (\pi_0 M)$, to $[\phi\bt\ol{\psi}]$, which is in the image of $\pi$.  Hence $\pi$ is surjective.

Now say $f \co [S^k, M] \to P$ is a 
homomorphism sending all nullhomotopic maps to the identity.  Since $\pi$ is surjective,  
$\ol{f}$ is completely determined by the equation $\ol{f} (\pi [\phi]) = f([\phi])$.  
To prove that $\ol{f}$ is well-defined, say
$\pi ([\phi]) = \pi ([\psi])$ for some $\phi, \psi\co S^k \to M$.
Then there exist $x, y\in M$ such that
$$[\phi] - [\psi] = [c_x] - [c_y],$$
in $\Gr[S^k, M]$, and hence there exists  $\tau\co S^k \to M$ such that
$$[\phi] + [c_y] + [\tau] = [c_x] + [\psi] + [\tau]$$
in $[S^k, M]$.  Again, we may assume without loss of generality that $\tau(1)$ is $k$--anchored.
 Adding $[\ol{\tau}]$ to both sides and applying $f$, we have
\begin{equation}\label{homom}
f([\phi]) + f([c_y]) + f([\tau\bt\ol{\tau}]) = f([c_x]) + f([\psi]) + f([\tau\bt\ol{\tau}]).
\end{equation}
By Lemma~\ref{EH}, $\tau\bt\ol{\tau}$ is nullhomotopic.  Since $f$ sends all nullhomotopic maps to the identity, Equation (\ref{homom}) reduces to $f([\phi]) = f([\psi])$, showing that $\ol{f}$ is well-defined.  It follows  from the equation $f = \ol{f} \circ \pi$ (together with surjectivity of $\pi$) that $\ol{f}$ is a homomorphism as well.
 \end{proof}

\section{Construction of the map $\wt{\Gamma}$}\label{Gamma}

We now give the details behind the construction of the natural map
$$\wt{\Gamma}\co \rh_k (M) = \Gr [S^k , M]/\Gr(\pi_0 M) \maps \pi_k (\Omega BM).$$

We begin by discussing an alternate model for the groups $\pi_k (\Omega BM)$.
Sending a based homotopy class to its unbased homotopy class defines a natural map
\begin{equation}\label{cn} J \co \pi_k \Omega BM = \la (S^k, 1), (\Omega BM, c_*) \ra \maps [S^k, \Omega BM]_0,\end{equation}
 where the right-hand side is the set of unbased homotopy classes of maps $f\co S^k\to \Omega BM$ such that $f(1)$ is homotopic  to the constant loop $c_*$. 
When $k=0$, it is immediate that $J$ is bijective.
Concatenation of loops makes $\Omega BM$ into an $H$--space with the constant loop $c_*$ as identity, so the identity component $\Omega_0 BM$ of $\Omega BM$ is simple. 
It now follows from Lemma~\ref{act} (see also \cite[Section 4.A]{Hatcher}) that $J$ is an isomorphism for $k \geqs 1$ as well; note also that in this case we have
$$[S^k, \Omega BM]_0 = [S^k, \Omega_0 BM].$$

There is a natural operation on $[S^k, \Omega BM]_0$  coming from the $H$--space structure of $\Omega BM$.  This operation is induced by the point-wise concatenation map
\begin{equation}\label{bd}\boxdot \, \co \Map(S^k, \Omega BM) \cross \Map(S^k, \Omega BM) \maps \Map(S^k, \Omega BM),
\end{equation}
defined by 
$$(\alpha \boxdot \beta) (z) =  \alpha(z) \cc \beta (z).$$
The map $\boxdot$ also induces an operation on $\la S^k, \Omega BM\ra = \pi_k (\Omega BM)$, and the bijection (\ref{cn}) is a homomorphism with respect to these operations.
Moreover, when $k\geqs 1$, the Eckmann-Hilton  argument~\cite{Eckmann-Hilton} shows that the operation on $\la S^k, \Omega BM\ra$ induced by $\boxdot$ agrees with the usual multiplication operation on $\pi_k (\Omega BM)$.  
With this understood, we may now view $J$ as a group isomorphism.

Recall that there is a natural map
\begin{equation}\label{eta} [0,1]\cross M\to BM\end{equation}
resulting from the fact that the simplicial space underlying $BM$ has $M$ as its space of 1-simplices\footnote{Our conventions on geometric realization come from Milnor~\cite{Milnor-realization}, and $(\ref{eta})$ is induced by the homeomorphism $I = [0,1] \to \Delta^1 = \{(t_0, t_1, t_2) \in \bbR^3\,|\, 0=t_0\leqs t_1 \leqs t_2 = 1\}$ given by $t\goesto (0, t, 1)$.}. 
The adjoint of  (\ref{eta}) is a map from $M$ to $ \Map ([0,1], BM)$,
and since $BM$ has a single $0$--simplex, 
this adjoint in fact lands in the based loop space of $BM$. We denote this map by
\begin{equation} \label{gamma} \gamma \co M\maps \Omega BM.\end{equation}
Note that $\gamma$ is natural with respect to continuous homomorphisms of topological monoids.
One might like to define a map 
$$[S^k , M] \maps [S^k, \Omega BM]_0 \isom \pi_k (\Omega BM)$$
 via composition with $\gamma$, but some correction is needed to make this map land in $ [S^k, \Omega BM]_0$.

Given $\alpha\in \Omega BM$ and $g\co S^k \to \Omega BM$, we simplify notation by writing $\alpha \boxdot g$ in place of $c_\alpha \boxdot g$.
For each $k\geqs 0$, we now define  
$$\Gamma \co [S^k , M] \maps [S^k, \Omega BM]$$
$$\hspace{.6in} [f]\goesto [\ol{\gamma(f(1))} \boxdot (\gamma \circ f)].$$ 
We note that there is a potential ambiguity in this notation: the symbol $\ol{\gamma(f(1))}$ refers to the constant map with image $\ol{\gamma (f(1))}\in \Omega BM$, not to the reverse of the constant map with image $\gamma(f(1))$ (of course a constant map is its own reverse).  We will continue to use this notation throughout the section.

It is straightforward to check that $\Gamma$ is well-defined on unbased homotopy classes, and $\Gamma ([f]) \in [S^k, \Omega_0 BM]$ for every $[f]$, since evaluating at $1\in S^k$ gives the loop $\ol{\gamma( f(1))} \cc \gamma (f(1)) \heq c_*$.

\begin{proposition}$\label{Gamma-hom}$ For each $k\geqs 0$, the function $\Gamma$ is a monoid homomorphism, natural in $M$. Moreover $\Gamma$ induces a natural homomorphism
$$\ol{\Gamma}_M \co \rh_k (M) \maps [S^k, \Omega BM]_0.$$
\end{proposition}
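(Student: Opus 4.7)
The plan is to verify the statements in order: (i) that $\Gamma$ descends to unbased homotopy classes; (ii) that it respects the monoid operations $\bt$ and $\boxdot$; (iii) that it is natural in $M$; and (iv) that it factors through $\rh_k(M)$.

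Step (i) is routine: an unbased homotopy $H \co S^k \times I \to M$ from $f$ to $f'$ has track $t \mapsto H(1,t)$, and applying $\gamma$ to $H$ and to this track and combining them yields a homotopy from $\ol{\gamma(f(1))} \boxdot (\gamma \circ f)$ to $\ol{\gamma(f'(1))} \boxdot (\gamma \circ f')$.

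Step (ii) is the main technical content. The standard $2$-simplex of the simplicial bar construction supplies, for each $(m,n)\in M\times M$, a path in $BM$ witnessing a homotopy between $\gamma(m \bt n)$ and $\gamma(m)\cc\gamma(n)$, continuous in $(m,n)$. Pulling back along $(f,g)\co S^k \to M\times M$ and inverting at $z=1$, the assertion $\Gamma([f]\bt[g]) = \Gamma([f])\boxdot \Gamma([g])$ reduces to showing the two maps
\begin{equation*}
z\mapsto \ol{\gamma(g(1))}\cc\ol{\gamma(f(1))}\cc\gamma(f(z))\cc\gamma(g(z))
\end{equation*}
and
\begin{equation*}
z\mapsto \ol{\gamma(f(1))}\cc\gamma(f(z))\cc\ol{\gamma(g(1))}\cc\gamma(g(z))
\end{equation*}
are homotopic as unbased maps $S^k \to \Omega BM$. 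This is the main obstacle, since loop concatenation in $\Omega BM$ is not commutative and the rearrangement is not formal. To surmount it, note that both maps land in $\Omega_0 BM$ and, under the bijection $J$ of (\ref{cn}), correspond to elements of $\pi_k(\Omega BM)$. This group is abelian: for $k\geqs 1$ by the Eckmann--Hilton argument applied to the $H$-space structure on $\Omega BM$, and for $k = 0$ because $\pi_0(\Omega BM) = \Gr(\pi_0 M)$, which is abelian by homotopy commutativity of $M$. Combined with the observation earlier in the section that $\boxdot$ on $[S^k, \Omega BM]_0$ agrees with the $\pi_k$-product under $J$, this legitimizes the rearrangement.

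For (iii), naturality follows from the stated naturality of $\gamma$ by pasting pointwise in $z$. For (iv), evaluation at $1 \in S^k$ gives the null-homotopic loop $\ol{\gamma(f(1))}\cc\gamma(f(1))$, so $\Gamma$ takes values in $[S^k,\Omega BM]_0$, which is a group under $\boxdot$ since $\Omega BM$ is group-like. Moreover, a constant map $c_m \co S^k \to M$ is sent under $\Gamma$ to the class of the constant map with value $\ol{\gamma(m)}\cc\gamma(m)\heq c_*$, which is the identity of $[S^k,\Omega BM]_0$. Hence by the universal property of the Grothendieck group, $\Gamma$ extends uniquely to a homomorphism $\Pi_k(M) \to [S^k,\Omega BM]_0$ that vanishes on $\Gr(\pi_0 M)$, and so descends to the desired $\ol{\Gamma}_M \co \rh_k(M) = \Pi_k(M)/\Gr(\pi_0 M) \to [S^k,\Omega BM]_0$.
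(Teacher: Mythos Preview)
Your step (ii) contains a genuine gap. You correctly reduce the homomorphism property to showing that the two maps
\[
z\longmapsto \ol{\gamma(g(1))}\cc\ol{\gamma(f(1))}\cc\gamma(f(z))\cc\gamma(g(z))
\quad\text{and}\quad
z\longmapsto \ol{\gamma(f(1))}\cc\gamma(f(z))\cc\ol{\gamma(g(1))}\cc\gamma(g(z))
\]
are homotopic, and you correctly observe that both land in $\Omega_0 BM$. But your resolution --- that abelianness of $\pi_k(\Omega BM)\cong [S^k,\Omega BM]_0$ under $\boxdot$ ``legitimizes the rearrangement'' --- does not do the job. The rearrangement you need is to commute the constant map $c_{\ol{\gamma(g(1))}}$ past $\Gamma([f]) = c_{\ol{\gamma(f(1))}}\boxdot(\gamma\circ f)$ inside $[S^k,\Omega BM]$. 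The factor $c_{\ol{\gamma(g(1))}}$ lies in the component indexed by $-[g(1)]\in\Gr(\pi_0 M)=\pi_0(\Omega BM)$, which is the identity component only when $[g(1)]$ is trivial. Commutativity of the submonoid $[S^k,\Omega BM]_0$ says nothing about commuting such a factor with another element of $[S^k,\Omega BM]$, and neither of your two displayed maps is visibly a $\boxdot$--product of elements of $[S^k,\Omega BM]_0$ that differ only by order. (Equivalently: $[S^k,\Omega BM]$ with $\boxdot$ identifies with $\pi_1(\Map(S^k,BM),c_*)$, and there is no reason for this fundamental group to be abelian from Eckmann--Hilton alone, since $BM$ is not known to be an $H$--space for merely homotopy--commutative $M$.)

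The paper closes this gap by invoking the homotopy commutativity of $M$ directly rather than abstract properties of $\Omega BM$. Having already shown $\gamma(m\bt n)\heq \gamma(m)\cc\gamma(n)$ naturally in $(m,n)$ (the $2$--simplex argument you also use), one composes with the homotopy $\bt\heq\bt\circ\tau$ to obtain a homotopy $\gamma(m)\cc\gamma(n)\heq\gamma(n)\cc\gamma(m)$, again natural in $(m,n)$. Pulling back along pairs such as $(c_{g(1)},f)\co S^k\to M\times M$ shows that the specific classes $[c_{\ol{\gamma(f(1))}}]$, $[c_{\ol{\gamma(g(1))}}]$, $[\gamma\circ f]$, $[\gamma\circ g]$ all commute with one another in $[S^k,\Omega BM]$ under $\boxdot$, which is exactly the rearrangement you need. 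Your steps (i), (iii), and (iv) are fine and match the paper's treatment.
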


We can now define the desired map $\wt{\Gamma}\co \rh_k (M) \maps \pi_k (\Omega BM)$.

\begin{definition}\label{wtGamma} Let $M$ be a homotopy commutative topological monoid.  For $k\geqs 0$, we define
$$\wt{\Gamma} = \wt{\Gamma}_M \defn J^{-1} \circ \ol{\Gamma}_M \co \rh_k (M) \maps \pi_k (\Omega BM),$$
where $J$ is the (natural) isomorphism (\ref{cn}). 
\end{definition}

 The proof of Proposition~\ref{Gamma-hom} will use the following elementary lemma.

\begin{lemma}\label{loops}
Let $(M, \bt)$ be a topological monoid.  Then the diagram
\begin{equation}\label{alpha-diag}
\xymatrix{ M\cross M \ar[r]^-{\bt} \ar[d]^{\gamma\cross\gamma} & M\ar[d]^\gamma\\
	\Omega BM \cross \Omega BM \ar[r]^-\cc &\Omega BM}
\end{equation}
is homotopy commutative.  Moreover, if $M$ is homotopy commutative  then
the maps
$$M\cross M \to \Omega BM$$
given by $(m, n) \goesto \gamma(m) \cc \gamma(n)$ and $(m, n) \goesto  \gamma(n)\cc \gamma(m)$ are homotopic.
\end{lemma}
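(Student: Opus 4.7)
The plan is to exhibit both homotopies directly as explicit maps built from the 2-simplices of the bar construction, using naturality of realization in the $(m,n)$ parameters.

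For the first claim, I would start by recalling that the simplicial space $N_{\bullet} \ul{M}$ underlying $BM$ satisfies $N_2 \ul{M} = M \cross M$, with face operators $d_0 (m,n) = n$, $d_1 (m,n) = m \bt n$, $d_2 (m,n) = m$. With the Milnor conventions used by $\gamma$, the characteristic map of the 2-skeleton provides a natural continuous map
\[ \Phi \co M \cross M \cross \Delta^2 \maps BM, \]
and for each fixed $(m,n)$ the restriction $\Phi(m,n,-) \co \Delta^2 \to BM$ is a singular 2-simplex whose three boundary 1-simplices are precisely the loops $\gamma(m)$ (the $d_2$-edge), $\gamma(n)$ (the $d_0$-edge) and $\gamma(m \bt n)$ (the $d_1$-edge), all based at the unique 0-simplex $*$. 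The 2-simplex itself now supplies a canonical null-homotopy of the boundary loop $\gamma(m) \cc \gamma(n) \cc \ol{\gamma(m \bt n)}$, which I would rephrase as a homotopy between $\gamma(m) \cc \gamma(n)$ and $\gamma(m\bt n)$ by a standard parametrization: sweep out $\Delta^2$ by paths from the terminal vertex that start on the broken side $d_2 \cup d_0$ and end on the straight side $d_1$. Since this parametrization is independent of $(m,n)$, running it through $\Phi$ gives a continuous homotopy of maps $M \cross M \to \Omega BM$ between $\gamma \circ \bt$ and $\cc \circ (\gamma \cross \gamma)$, establishing the first claim.

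For the second claim, the homotopy commutativity hypothesis supplies a continuous family $h_s \co M \cross M \to M$ with $h_0 = \bt$ and $h_1 = \bt \circ \tau$. Composing with $\gamma$ and then with the homotopy from the first part applied both to $(m,n)$ and $(n,m)$, I would concatenate the resulting three homotopies
\[ \gamma(m) \cc \gamma(n) \;\heq\; \gamma(m \bt n) \;\heq\; \gamma(n \bt m) \;\heq\; \gamma(n) \cc \gamma(m), \]
each of which is continuous in $(m,n)$, to obtain a homotopy of maps $M\cross M \to \Omega BM$ between $(m,n) \goesto \gamma(m) \cc \gamma(n)$ and $(m,n) \goesto \gamma(n) \cc \gamma(m)$.

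The main bookkeeping hurdle is getting the parametrization of the 2-simplex to line up with the definition of $\gamma$ as the adjoint of the map $I \cross M \to BM$ coming from $t \goesto (0,t,1) \in \Delta^1$; once one chooses a deformation of $\Delta^2$ that restricts to the concatenation parametrization $I \to \partial \Delta^2 \setminus \Int(d_1)$ at time $0$ and to the reparametrized edge $d_1$ at time $1$, everything is forced and continuous in $(m,n)$. All other steps are formal manipulations with concatenation of homotopies, so no further obstacles arise.
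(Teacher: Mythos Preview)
Your proposal is correct and follows essentially the same route as the paper: the paper also uses the characteristic map $M\cross M\cross \Delta^2\to BM$ from the $2$--simplices of the bar construction, and writes down an explicit reparametrization $H\co I\cross I\to\Delta^2$ that at time $0$ traces the broken edge $d_2\cup d_0$ and at time $1$ traces the diagonal edge $d_1$---precisely the ``sweep'' you describe. For the second statement the paper likewise concatenates the three homotopies $\cc\circ(\gamma\cross\gamma)\heq\gamma\circ\bt\heq\gamma\circ\bt\circ\tau\heq\cc\circ(\gamma\cross\gamma)\circ\tau$, just as you do.
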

\begin{proof}    The space of 2-simplices in the simplicial space $BM$ homeomorphic to $M\cross M$, with $(m, n)$ corresponding to the sequence of composable morphisms 
$$*\xmaps{(n, *)} * \xmaps{(m,*)} *.$$  

We  describe the desired homotopy $M\cross M\cross I\to \Omega BM$ by specifying its adjoint, which is induced by a map of the form
$$(M\cross M\cross I) \cross I = M\cross M \cross (I \cross I) \xmaps{\Id_M\cross \Id_M \cross H} M\cross M\cross \Delta^2 \srm{\pi} BM,$$
where $H\co I\cross I \to \Delta^2$ is defined below and $\pi$ is induced by 
the definition of geometric realization.
Set 
$$\Delta^2 = \{( t_1, t_2 )\in I\cross I \,:\,   t_1 \leqs t_2  \},$$
and define 
$$\vec{w}_t =(1-t)(0,1) + t(1/2, 1/2) = (t/2, 1-t/2) \in \Delta^2.$$
The map $H$ is defined by
$$H(t,s) = \begin{cases} 2s \vec{w}_t & \textrm{if} \,\, 0 \leqs s\leqs 1/2\\  (2s-1) (1, 1) + (2-2s) \vec{w}_t & \textrm{if} \,\, 1/2 \leqs s\leqs 1,\end{cases}$$ 
and one may check that it has the desired properties 
(note that we are using the conventions regarding (co)face  maps from~\cite{Milnor-realization}).  This proves commutativity  of 
(\ref{alpha-diag}).

When $M$ is homotopy commutative, the second statement in the lemma follows from the first:  we have
$$\cc \circ (\gamma\cross \gamma) \heq \gamma \circ \bt \heq \gamma \circ \bt \circ \tau \heq \cc \circ (\gamma\cross \gamma) \circ \tau.$$
\end{proof}

\begin{proof}[Proof of Proposition~\ref{Gamma-hom}] First we show that $\Gamma$ is a monoid homomorphism.   
  Given $\phi, \psi \co S^k \to M$, we must show that
$$\Gamma ([\phi \bt \psi])  = \Gamma ([\phi]) \boxdot \Gamma ([\psi]),$$
or in other words that
$$\ol{\gamma (\phi (1) \bt \psi (1))} \boxdot \left(\gamma \circ (\phi \bt \psi)\right)
\heq (\ol{\gamma (\phi (1))} \boxdot \gamma \circ \phi) \boxdot (\ol{\gamma (\psi (1))} \boxdot \gamma \circ \psi).$$
Applying Lemma~\ref{loops} gives
\begin{eqnarray*}
\ol{\gamma (\phi (1) \bt \psi (1))} \boxdot  \left(\gamma \circ (\phi \bt \psi)\right)
& \heq &\left( \ol{  \gamma (\phi (1) ) \cc \gamma(\psi  (1)) }  \right)
			\boxdot \left(\gamma \circ \phi \boxdot \gamma \circ \psi\right)\\
& = & \left( \ol{  \gamma (\psi  (1)) } \cc \ol{  \gamma (\phi  (1) )} \right) \boxdot 
	\left( \gamma \circ \phi \boxdot \gamma \circ \psi \right)\\
\end{eqnarray*}
Since the operation $\boxdot$ is homotopy associative, to complete the proof that $\Gamma$ is a homomorphism it remains only to show that $[\ol{  \gamma(\psi  (1) )}]$, $ [\ol{  \gamma(\phi (1) )}]$, and $[\gamma \circ \phi]$ commute with one another under the operation $\boxdot$.  
By Lemma~\ref{loops}, $[ \gamma(\psi  (1) )]$, $[\gamma(\phi  (1) )]$, and $[\gamma \circ \phi]$ commute with one another, which suffices because $[ \gamma(\psi  (1) )]$ and $[\gamma(\phi  (1) )]$ are the inverses of $[\ol{  \gamma(\psi  (1) )}]$ and $ [\ol{  \gamma(\phi (1) )}]$ (respectively) under $\boxdot$.

It follows from the definitions that $\Gamma$ sends all nullhomotopic maps to the identity element in $[S^k, \Omega BM]_0$. Since $\Gamma$ is a monoid homomorphism, it extends to a group homomorphism out of $\Pi_k (M) = \Gr [S^k, M]$, which must be zero on the subgroup $\Gr (\pi_0 M)$. 
This yields the desired homomorphism
$$\ol{\Gamma}\co \rh_k (M) = \Pi_k (M)/\Gr (\pi_0 M) \maps  [S^k, \Omega BM]_0.$$ 
Naturality of $\Gamma$, and hence of $\ol{\Gamma}$, follows from naturality of $\gamma$. 
\end{proof}

\section{Stably group-like monoids}$\label{sgl-sec}$
We now  show that under certain conditions, it is possible to construct an inverse to the homomorphism
$$\wt{\Gamma}\co \rh_k (M) \maps \pi_k (\Omega BM)$$
introduced in Definition~\ref{wtGamma}.
We recall some terminology from Ramras~\cite[Section 3]{Ramras-excision}.

\begin{definition}$\label{sg}$ A topological monoid $M$ is \e{stably group-like} with respect to an element $[m]\in \pi_0 (M)$ if the submonoid of $\pi_0 (M)$ generated by $[m]$ is cofinal.  More explicitly, $M$  is stably group-like with respect to $m$ if for every $x\in M$, there exists $x'\in M$ and $k\geqs 0$ such that $x\bt x'$ lies in the same path component as $m^k$.

Given $m_0 \in M$, we write $M\xmaps{\bt m_0} M$ to denote the map $m\goesto m\bt m_0$.
We define
$$M_\infty (m_0) = \tele \left(M\xmaps{\bt m_0} M \xmaps{\bt m_0} M\xmaps{\bt m_0} \cdots\right),$$
where the right-hand side is the infinite mapping telescope of this sequence.
As in~\cite{Ramras-excision}, we write points in $M_\infty (m_0)$ as equivalence classes of triples $(m,n,t)$, where $m\in M$, $n\in \bbN$, $t\in [0,1]$, and 
$$(m,n,1) \sim (m\bt m_0, n+1, 0)$$ 
for each $n\in \bbN$.  We always use $e:= [(e, 0, 0)]\in M_\infty (m_0)$ as the basepoint.
\end{definition}

Our next goal is to prove the following special case of Theorem~\ref{gen-thm}, from which the full result will be deduced in Section~\ref{sec:proof} using a colimit argument.

\begin{proposition}$\label{sgl}$ Let $M$ be a proper, homotopy commutative topological monoid that  is stably group-like with respect to a strongly $1$--anchored element $m_0\in M$. If the submonoid of strongly $k$--anchored components is cofinal in $\pi_0 (M)$ for some $k\geqs 0$, then 
$$\wt{\Gamma}\co \rh_k (M) \maps \pi_k (\Omega BM)$$
is an isomorphism.
\end{proposition}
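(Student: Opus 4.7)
The plan is to exploit Proposition~\ref{SGL-prop}, which supplies a natural weak equivalence $M_\infty(m_0) \simeq \Omega BM$ compatible with $\gamma$ and the canonical inclusion $M \injects M_\infty(m_0)$. Because $S^k$ is compact and the telescope is a sequential hocolim, this reduces the problem to identifying $\rh_k(M)$ with
$$\colim_n \pi_k(M, m_0^n),$$
where the colimit is taken along the maps induced by $\bt\, m_0 \co (M, m_0^n) \to (M, m_0^{n+1})$; the basepoint $e$ of $M_\infty(m_0)$ corresponds to the basepoint $m_0^n$ at stage $n$ because the path from $(e,0,0)$ through $n$ cylinders of the telescope terminates at $(m_0^n, n, 0)$.

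Next I would construct an inverse $\Psi$ to $\wt\Gamma$ by sending a based representative $\beta \co (S^k, 1) \to (M, m_0^n)$ to the class of its underlying unbased map in $[S^k, M]/\pi_0 M \isom \rh_k(M)$; the latter identification is Proposition~\ref{univ}, which invokes the strongly $k$--anchored cofinality hypothesis. This descends to the colimit because passing from stage $n$ to stage $n+1$ replaces $\beta$ by $\beta \bt c_{m_0}$, which merely adds a constant. For compatibility with $\wt\Gamma$, given $[\phi] \in \rh_k(M)$ the stably group-like hypothesis supplies $y \in M$ and $n \geqs 0$ with $\phi(1)\bt y$ in the component of $m_0^n$; homotoping $\phi \bt c_y$ so that its value at $1$ is exactly $m_0^n$ gives a based representative, and one checks by tracing through Definition~\ref{wtGamma} and the factorization $\gamma \co M \to M_\infty(m_0) \to \Omega BM$ that $\Psi \circ \wt\Gamma [\phi]$ recovers the unbased class of this shifted representative, which agrees with $[\phi]$ in $\rh_k(M)$ since the constant $c_y$ is killed. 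In the opposite direction, every colimit element is by definition represented by some based $\beta$, whose image under $\wt\Gamma \circ \Psi$ tautologically returns $\beta$.

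The critical verification is that $\Psi$ is a group homomorphism, comparing the loop-concatenation operation $\cc$ on $\pi_k(M, m_0^n)$ with the pointwise operation $\bt$ on $[S^k, M]$. This is exactly the content of Lemma~\ref{EH} at a $k$--anchored basepoint. Since $m_0$ itself is only assumed strongly $1$--anchored, I would use cofinality to pick $m' \in M$ with $m_0^n \bt m'$ strongly $k$--anchored, shift every relevant based map by $\bt\, c_{m'}$ to land at such an anchored basepoint, apply Lemma~\ref{EH} there, and then reabsorb $m'$ into a later stage of the telescope using stably-group-like. The main obstacle is managing these intertwined basepoint and shift manipulations: the strongly $1$--anchored condition on $m_0$ is what makes Proposition~\ref{SGL-prop} (and hence the telescope model) available; the strongly $k$--anchored cofinality is needed both for Proposition~\ref{univ} and for each invocation of Lemma~\ref{EH}; and the stably-group-like property is what lets us move freely between components of $M$ and stages of $M_\infty(m_0)$ without disturbing the class in $\rh_k(M)$.
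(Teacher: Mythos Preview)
Your proposal is correct and follows essentially the same approach as the paper: pass to the telescope via Proposition~\ref{SGL-prop}, identify $\pi_k(M_\infty(m_0))$ with $\colim_n \pi_k(M,m_0^n)$, define the inverse $\Psi$ by $\langle\phi\rangle\mapsto[\phi]$, and check the two composites are the identity (the paper makes your ``factorization'' remark precise as Lemma~\ref{MS-iso}). One simplification worth noting: since Proposition~\ref{univ} shows every element of $\rh_k(M)$ is already of the form $[\phi]$ for a single $\phi$, checking $\Phi\circ\wt\Gamma$ element by element already covers the whole group, so your ``critical verification'' that $\Psi$ is a homomorphism---and the attendant basepoint-shifting to $k$--anchored points---can be bypassed entirely.
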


Before giving the proof, we need to review some facts surrounding the Group Completion Theorem of McDuff and Segal~\cite{McDuff-Segal}. A detailed account of this result and its proof appears in~\cite{ERW} (see also~\cite{Miller-Palmer, Randal-Williams-scholium}). The Group Completion Theorem provides an isomorphism
\begin{equation}\label{MS} \pi_k (\Omega BM) \srm{\isom} \pi_k (M_\infty (m_0))\end{equation}
under the conditions in Proposition~\ref{sgl}.
 We will give an explicit description of this isomorphism in Lemma~\ref{MS-iso} below.  The proof of Proposition~\ref{sgl} will then proceed by 
 constructing another map
$$\Psi\co \pi_k (M_\infty (m_0)) \maps \rh_k (M)$$
so that the composite
$$\pi_k (\Omega BM) \srm{\isom} \pi_k (M_\infty (m_0))\srm{\Psi} \rh_k (M)$$
is inverse to $\wt{\Gamma}$.

If $M$ is 
 stably group-like with respect to a strongly 1--anchored component $[m_0]$, then (\ref{MS}) is induced by a zig-zag of weak equivalences, as we now explain.
The monoid $M$ acts continuously on $M_\infty (m_0)$ via 
$$m \cdot [(m', n, t)] = [(m\bt m', n, t)].$$
An action of $M$ on a space $X$ gives rise to  a category (internal to $\Top$) with object space $X$ and morphism space $M \cross X$; the morphism 
$(m, x)$ has domain $x$ and range $m\cdot x$, and composition is just multiplication in $M$: $(n, m\cdot x) \circ (m, x) = (n\bt m, x)$.  We denote the classifying space of this category by $X_M$.  Since $\{*\}_M \isom BM$ (where $\{*\}$ is the one-point space),  we get a canonical map
$$q\co X_M \to BM$$
induced by the projection $X\to \{*\}$.  When $X = M_\infty (m_0)$, we call this projection map $q(M, m_0)$.

The isomorphism (\ref{MS})
is induced by a zig-zag of weak equivalences of the form
\begin{equation}\label{MS1}\Omega BM \srou{f}{\heq} \hofib (q(M, m_0))
\srlou{g}{\heq} M_\infty (m_0).
\end{equation}
Here $\hofib (q(M, m_0))$ is the homotopy fiber of $q(M, m_0)$ over the basepoint $* \in BM$.  
Points in $\hofib (q(M, m_0))$ are pairs 
$$(z, \beta)\in (M_\infty (m_0))_M\cross \Map(I, BM)$$ 
with $\beta(0) = q(M, m_0) (z)$ and $\beta(1) = *$.  The basepoint of $\hofib (q(M, m_0))$ is the pair $([e, 0, 0], c_{*})$, where $[(e, 0, 0)] \in (M_\infty (m_0))_M$ corresponds to the point $[(e, 0, 0)]$ in the object space $M_\infty (m_0)$ of the category underlying $(M_\infty (m_0))_M$.  

The first map $f$ in (\ref{MS1}) is induced by sending a based loop $$\alpha\co S^1 \to BM$$ to the point $([(e, 0, 0)], \alpha) \in \hofib (q(M, m_0))$.  It is a weak equivalence because $(M_\infty (m_0))_M$ is weakly contractible (see~\cite[p. 281]{McDuff-Segal},~\cite[pp. 2251--2252]{Ramras-excision}, or~\cite[Section 6.4]{ERW}).  Note here that $\Omega BM \isom \hofib(* \to BM)$.

The second map $g$ in (\ref{MS1}) is the natural inclusion of the fiber of $q(M, m_0)$ over $*\in BM$ into the homotopy fiber.  The fact that $g$ is a weak equivalence is established in~\cite[Proof of Theorem 3.6]{Ramras-excision} under a stronger anchoring condition on $m_0$ (see Remark~\ref{wa-rmk}). The proof given in~\cite[Proof of Theorem 3.6]{Ramras-excision} relies on the fact that $g$ induces isomorphisms in homology for all local (abelian) coefficient systems. A full proof of this fact, due to Randal--Williams, appeared only recently~\cite{Randal-Williams-scholium}. In the present context, however, considerations of non-trivial coefficient systems can be avoided: we  show in Lemma~\ref{simple} below  that $M_\infty (m_0)$ is in fact a \e{simple space} whenever $m_0$ is strongly $1$--anchored. Since $\Omega BM$ is a group-like $H$--space, it too is simple, as is $\hofib(q(M,m_0))\heq \Omega BM$. Thus in the present situation, to conclude that $g$ is a weak equivalence
it suffices (by~\cite[Proposition 4.74]{Hatcher}) to show that $g$ induces an isomorphism in \e{integral} homology; this is proven, for instance, in~\cite[Section 6.4]{ERW}.

 For each $k\geqs 0$, there is a natural isomorphism
\begin{equation} \label{infty} \pi_k (M_\infty (m_0), e) \isom \colim \left(\pi_k (M, e) \xmaps{\bt m_0} \pi_k (M, m_0) \xmaps{\bt m_0}   \cdots\right),
\end{equation}
where the maps in the colimit on the right are those induced by (right) multiplication by the constant map $c_{m_0}$.  
We will denote the colimit on the right by 
\begin{equation}\label{colim}\colim \limits_{n\to \infty} \pi_k (M, m_0^n).\end{equation}
It will be helpful to describe the isomorphism (\ref{infty}) explicitly. Each finite stage 
of the mapping telescope $M_\infty (m_0)$ deformation-retracts, in a canonical manner, to its right end $M\cross \{n\}\cross\{0\}$  ($n=0,1,\ldots$). Let 
$$\xi_n\co [0,1]\to M_\infty (m_0)$$
 be the track of this deformation retraction on $[(e,0,0)]$, so that $\xi_n$ is a path from $[(e,0,0)]$ to $[(m_0^n, n, 0)]$.
Then for each $\la \alpha \ra \in \pi_k (M, m_0^n)$, the isomorphism (\ref{infty}) sends $\la \alpha \ra$ to 
$\la \xi_n \cdot \alpha\ra$, where $\cdot$ refers to the operation described in Lemma~\ref{act}.
We refer to $\xi_n \cdot \alpha$ as a representative of $\la \xi_n \cdot \alpha\ra$ at level $n$.
Note that if $x\in \pi_k (M_\infty (m_0), e)$ admits a representative at level $n$, then it also admits representatives at level $N$ for all $N\geqs n$.

\begin{lemma} \label{simple} Let $M$ be a topological monoid that is stably group-like with respect to a strongly $1$--anchored element  $m_0\in M$.
Then $M_\infty (m_0)$ is simple.
\end{lemma}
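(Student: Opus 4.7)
The plan is to verify, for each $k\geqs 1$ and each pair $\la\eta\ra \in \pi_1(M_\infty(m_0), e)$, $\la\alpha\ra \in \pi_k(M_\infty(m_0), e)$, that $\la\eta\ra \cdot \la\alpha\ra = \la\alpha\ra$. Using the colimit isomorphism (\ref{infty}) together with strong $1$--anchoring of $m_0$, I choose a level $n$ so that $m_0^n$ is $1$--anchored, with anchoring homotopy $H$ and anchor loop $\eta^*(t) = H(m_0^n, m_0^n, t)$, and so that $\la\eta\ra$ and $\la\alpha\ra$ are represented at level $n$ by based maps $\eta_0\co (S^1, 1) \to (M, m_0^n)$ and $\alpha_0\co (S^k, 1)\to (M, m_0^n)$. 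Under (\ref{infty}), the action on the telescope is induced from the $\pi_1(M, m_0^n)$--action on $\pi_k(M, m_0^n)$, so it suffices to establish $\la \eta_0 \cdot \alpha_0 \ra = \la \alpha_0 \ra$ in $\colim_j \pi_k(M, m_0^j)$; I will detect this equality already in $\pi_k(M, m_0^{2n})$, i.e.\ after stabilizing via $\bt\, c_{m_0^n}$.

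The heart of the argument is to build two free homotopies and apply Lemma~\ref{act} twice, parameterizing $\eta_0$ by $I$ in the usual way. First I would take $F\co S^k \cross I \to M$ defined by $F(z, t) = \alpha_0(z) \bt \eta_0(t)$: at $t = 0, 1$ this is $\alpha_0 \bt c_{m_0^n}$, while the track $F(1, t) = m_0^n \bt \eta_0(t) = (c_{m_0^n} \bt \eta_0)(t)$ is a loop at $m_0^{2n}$. Lemma~\ref{act} then implies that $\la c_{m_0^n} \bt \eta_0 \ra$ acts trivially on $\la \alpha_0 \bt c_{m_0^n} \ra$ in $\pi_k(M, m_0^{2n})$. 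Second, using the anchoring homotopy, let $G\co S^1 \cross I \to M$ be given by $G(s, t) = H(\eta_0(s), m_0^n, t)$: this is a free homotopy from $\eta_0 \bt c_{m_0^n}$ to $c_{m_0^n} \bt \eta_0$, both based loops at $m_0^{2n}$, whose track at $1 \in S^1$ is exactly $H(m_0^n, m_0^n, t) = \eta^*(t)$. Since $\eta^*$ acts trivially on $\pi_1(M, m_0^{2n})$ by $1$--anchoring, Lemma~\ref{act} yields $\la \eta_0 \bt c_{m_0^n} \ra = \la c_{m_0^n} \bt \eta_0 \ra$ in $\pi_1(M, m_0^{2n})$.

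Combining these, the classes $\la \eta_0 \bt c_{m_0^n} \ra$ and $\la c_{m_0^n} \bt \eta_0 \ra$ induce the same action on $\pi_k(M, m_0^{2n})$, so $\la \eta_0 \bt c_{m_0^n} \ra$ also acts trivially on $\la \alpha_0 \bt c_{m_0^n} \ra$. Naturality of the $\pi_1$--action under the continuous map $\bt\, c_{m_0^n} \co M \to M$ gives the identity $(\eta_0 \cdot \alpha_0) \bt c_{m_0^n} = (\eta_0 \bt c_{m_0^n}) \cdot (\alpha_0 \bt c_{m_0^n})$, and hence $\la (\eta_0 \cdot \alpha_0) \bt c_{m_0^n} \ra = \la \alpha_0 \bt c_{m_0^n} \ra$ in $\pi_k(M, m_0^{2n})$; passing through (\ref{infty}) yields $\la \eta \ra \cdot \la \alpha \ra = \la \alpha \ra$. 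The main obstacle is arranging $G$ so that its track is \emph{precisely} $\eta^*$, since $1$--anchoring only provides trivial $\pi_1$--action for this particular loop; taking $G$'s second argument to be the constant $m_0^n$ is exactly what achieves this. Note that although $1$--anchoring only directly supplies information about $\pi_1$, this suffices for all $k \geqs 1$ because the $\pi_1$--action on $\pi_k$ factors through $\pi_1$.
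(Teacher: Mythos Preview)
Your argument for the identity component of $M_\infty(m_0)$ is correct and essentially matches the paper's: both build the free homotopy $(z,t)\mapsto \alpha_0(z)\bt \eta_0(t)$ to see that $\la c_{m_0^n}\bt\eta_0\ra$ acts trivially on $\la\alpha_0\bt c_{m_0^n}\ra$, and both use the anchoring homotopy $H$ (with second argument held at $m_0^n$, exactly as you emphasize) to show $\la\eta_0\bt c_{m_0^n}\ra = \la c_{m_0^n}\bt\eta_0\ra$ in $\pi_1(M, m_0^{2n})$. Your use of naturality of the $\pi_1$--action under $\bt\,c_{m_0^n}$ is a clean way to transport the conclusion back, and your closing remark about why $1$--anchoring suffices for all $k$ is on point.

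However, there is a genuine gap: the statement asserts that $M_\infty(m_0)$ is simple, meaning \emph{every} path component is simple, and you have only treated the component $C_e$ of the basepoint $e = [(e,0,0)]$. The telescope has many other components (indexed roughly by $\Gr(\pi_0 M)$), and simplicity at one basepoint does not automatically propagate. The paper handles this by a separate short argument: given the component $C$ of $[(m,n,t)]$, choose $m'\in M$ with $m'\bt m \simeq m_0^N$, and define self-maps $f,g\co M_\infty(m_0)\to M_\infty(m_0)$ by left-multiplication by $m'$ and $m$ (with appropriate level shifts). Then $f(C)\subset C_e$, $g(C_e)\subset C$, and one checks that $g\circ f$ restricted to each finite stage is homotopic to the inclusion, so $f_*$ is injective on all homotopy groups of $C$. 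Since the $\pi_1$--action is natural under $f$, injectivity of $f_*$ together with simplicity of $C_e$ forces simplicity of $C$. You should add this step (or an equivalent one) to complete the proof.
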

\begin{proof} 
First we show that $\pi_1 (M_\infty (m_0), e)$ acts trivially on $\pi_k (M_\infty (m_0), e)$.
Given elements in $\pi_1 (M_\infty (m_0), e)$ and $\pi_k (M_\infty (m_0), e)$, we may choose representatives 
$\xi_n \cdot \xi$ and $\xi_n \cdot \alpha$
of these classes (at the same level), and we may assume that there exists a homotopy $H$ anchoring $m_0^{n}$.

Now consider the homotopy 
$$\alpha_t = \begin{cases} \xi_{2n}|_{[3t, 1]} \cdot (\alpha \bt m_0^n), & \textrm{ if } 0 \leqs t \leqs 1/3\\
					\alpha \bt \xi (3(t-1/3)), &  \textrm{ if } 1/3 \leqs t \leqs 2/3\\
					\xi_{2n}|_{[3(1-t), 1]} \cdot (\alpha \bt m_0^n), &  \textrm{ if } 2/3 \leqs t \leqs 1,
		\end{cases}
$$		
(the restricted paths $\xi_{2n}|_{[3t, 1]}$, $ \xi_{2n}|_{[3(t-1), 1]}$ should of course be reparametrized to have domain starting at $0$).
We have $\alpha_0 = \alpha_1 = \xi_{2n} \cdot (\alpha \bt m_0^n)$, and the loop
$\alpha_t (1)$ is precisely $\xi_{2n} \cdot (m_0^n \bt \xi)$. Thus we see that 
$\la \xi_{2n} \cdot (m_0^n \bt \xi)\ra$ acts trivially on $\la \xi_{2n} \cdot (\alpha \bt m_0^n) \ra = \la \xi_{n} \cdot \alpha \ra$. 

We claim that 
\begin{equation}\label{comm-eq}\la  m_0^n \bt  \xi \ra  = \la \xi \bt m_0^n\ra.\end{equation}
It then follows that
$$\la \xi_{2n} \cdot (m_0^n \bt  \xi) \ra  = \la \xi_{2n} \cdot (\xi \bt m_0^n) \ra = \la \xi_n \cdot \xi\ra,$$
allowing us to conclude that $ \la \xi_n \cdot \xi\ra$ acts trivially on $ \la \xi_{n} \cdot \alpha \ra$, as desired.

To prove (\ref{comm-eq},) let $H$ be a homotopy anchoring $m_0^n$, and let $\eta$ be the loop $\eta (t) = H(m_0, m_0, t)$ (so by assumption, $\la \eta\ra$ is central in $\pi_1 (M, m_0^{2n})$).
For each $s\in [0,1]$, let $\eta_s$ be the path $\eta_s (t) = \eta (st)$ ($t\in [0,1]$), and set
$$h_s (t) = H(\xi (t), m_0^n, s),$$
so that $h_s$  is a loop based at $\eta (s) = \eta_s (1)$.
  Let
$$g_s  = \eta_s \cdot h_s\heq \eta_s \cc h_s \cc \ol{\eta}_s.$$
For each $s\in [0,1]$, $g_s$ is   based at $m_0^{2n}$, and $\la g_0\ra = \la\xi \bt m_0^n \ra$.  Since
$$\la g_1 \ra = \la \eta \cdot ( m_0^n \bt  \xi)\ra = \la m_0^n \bt  \xi\ra,$$ 
we see that $g_s$ is a based homotopy
$\xi \bt m_0^n \heq m_0^n \bt  \xi$.

To complete the proof, we must extend this simplicity result to the other path components of $M_\infty (m_0)$. 
The proof is essentially that of~\cite[Corollary 3.14]{Ramras-excision}, so we will be brief. 
Let $C$ be the path component of $M_\infty (m_0)$ 
containing $[(m, n, t)]$. Then there exists $m'\in M$ such that $m' \bt m \heq m_0^N$ for some $N\geqs n$. Consider the maps
$f, g\co M_\infty (m_0) \to M_\infty (m_0)$
defined by
$$f([(x, l, s)]) = [(m' \bt x, l+N-n, s)] \textrm{ and } g(([x,l, s)]) = [(m\bt x, l+n, s)].$$
Then 
 $f(C) \subset C_e$  
 and 
$g(C_e) \subset C$.   
We claim that $g_* \circ f_*$, and hence $f_*$, is injective on homotopy. Simplicity of $C_e$ then implies simplicity of $C$.
Since $$g\circ f \circ ([x,l,0]) = [(m\bt m' \bt x, l+N, 0)]\heq [(x\bt m_0^N, l+N, 0)] = [(x, l, 0)],$$ 
the restriction of $g\circ f$ to $M\cross \{l\}\cross \{0\}$ is homotopic to the inclusion. Injectivity of $g_* \circ f_*$  now follows from the fact that
every compact set in $M_\infty (M)$ can be homotoped into $M\cross \{l\}\cross \{0\}$ for some $l$.
\end{proof}

The preceding discussion is summarized by the following proposition.

\begin{proposition}\label{SGL-prop}  Let $M$ be a 
homotopy commutative topological monoid that is stably group-like with respect to a strongly $1$--anchored element  $m_0\in M$.
Then there are weak equivalences
\begin{equation}\label{MS1'}\Omega BM \srou{f}{\heq} \hofib (q(M, m_0))
\srlou{g}{\heq} M_\infty (m_0).
\end{equation}
\end{proposition}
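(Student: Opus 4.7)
The proof amounts to assembling the discussion preceding the proposition into a single statement, so my plan is to verify each of the two maps $f$ and $g$ in the zig-zag~(\ref{MS1'}) separately, using results already established above.

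For $f$, I would begin by identifying $\Omega BM \isom \hofib(\ast \to BM)$ and defining $f$ via the constant section $\alpha \goesto ([(e,0,0)], \alpha)$. The essential point is that $f$ fits into a map of homotopy fiber sequences whose middle term is the Borel construction $(M_\infty(m_0))_M$. Hence $f$ is a weak equivalence provided this Borel construction is weakly contractible, and this is a standard consequence of $m_0$ generating a cofinal submonoid of $\pi_0(M)$; see \cite[p.~281]{McDuff-Segal}, \cite[Section~6.4]{ERW}, or~\cite[pp.~2251--2252]{Ramras-excision}. Since stably group-likeness is precisely the cofinality condition required, this step goes through immediately.

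For $g$, taken to be the inclusion of the strict fiber $q(M,m_0)^{-1}(\ast) = M_\infty(m_0)$ into the homotopy fiber $\hofib(q(M,m_0))$, I would exploit a simplicity reduction. By Lemma~\ref{simple}, the strong $1$--anchoring hypothesis on $m_0$ forces $M_\infty(m_0)$ to be a simple space. On the other hand, $\hofib(q(M,m_0)) \heq \Omega BM$ by the first step, and since $\Omega BM$ is a group-like $H$--space it is simple as well. Appealing to~\cite[Proposition~4.74]{Hatcher}, it then suffices to show that $g$ is an isomorphism on \emph{integral} homology, rather than on homology with arbitrary local coefficient systems.

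The integral homology isomorphism is the only substantive obstacle, and for this I would simply invoke the modern formulation of the McDuff--Segal group completion theorem proved in detail in~\cite[Section~6.4]{ERW}, whose hypotheses are satisfied here. A virtue of routing the argument through Lemma~\ref{simple} is that one avoids the more delicate local-coefficients statement used in~\cite[Proof of Theorem~3.6]{Ramras-excision} (whose full justification required the work of Randal--Williams~\cite{Randal-Williams-scholium}). Combining the two weak equivalences yields the zig-zag~(\ref{MS1'}), completing the proof.
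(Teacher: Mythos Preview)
Your proposal is correct and follows essentially the same argument as the paper: the proposition is explicitly introduced as a summary of the preceding discussion, and you have reproduced that discussion faithfully, including the key simplicity reduction via Lemma~\ref{simple} that allows one to check only an integral homology isomorphism for $g$ (invoking~\cite[Proposition~4.74]{Hatcher} and~\cite[Section~6.4]{ERW}) rather than the local-coefficients version from~\cite{Ramras-excision, Randal-Williams-scholium}.
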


Our next goal is to give an explicit description of the isomorphism on homotopy groups provided by (\ref{MS1'}).
Let 
\begin{equation}  \label{i}
\xymatrix@R=2pt{M \ar[r]^-{i_n} & M_\infty (m_0)\\
m \ar@{|->}[r] & [(m, n, 0)]}
\end{equation}
 denote the inclusion of $M$ into the $n$th stage of the mapping telescope, and 
 define 
\begin{equation}  \label{fn}
\xymatrix@R=2pt{M \ar[r]^-{f_n} &  \Omega BM\\
m \ar@{|->}[r] & \ol{\gamma(m_0^n)} \cc \gamma(m),}
\end{equation}
where $\gamma \co M\to \Omega BM$ is the map (\ref{gamma}).

\begin{lemma} \label{MS-iso} Let $M$ and $m_0$ be as in Proposition~\ref{sgl}.  Then for each element $\alpha\in \pi_k (M, m_0^n)$,
the isomorphism
$$\pi_k (\Omega BM) \srm{\isom} \pi_k (M_\infty (m_0))$$
induced by the zig-zag $(\ref{MS1})$ carries $(f_n)_* (\alpha)$ to $(i_n)_* (\alpha)$.  
\end{lemma}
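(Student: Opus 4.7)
The plan is to construct, naturally in $m \in M$, an unbased homotopy $H \co M \times I \to \hofib(q(M,m_0))$ from $f \circ f_n$ to $g \circ i_n$, and then exploit simplicity of $\hofib(q(M,m_0))$ to conclude that the corresponding classes agree on $\pi_k$ after canonical basepoint identification. Since the isomorphism induced by the zig-zag (\ref{MS1}) is $g_*^{-1}\circ f_*$, it suffices to show $f_* \circ (f_n)_* = g_* \circ (i_n)_*$ on $\pi_k$.

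Unpacking definitions, one has $(f \circ f_n)(m) = ([(e,0,0)],\, \overline{\gamma(m_0^n)} \cc \gamma(m))$ while $(g \circ i_n)(m) = ([(m,n,0)],\, c_*)$. First, I would build a path $\omega_m$ in $(M_\infty(m_0))_M$ from $[(e,0,0)]$ to $[(m,n,0)]$, continuous in $m$, whose projection under $q$ realizes the loop $\overline{\gamma(m_0^n)} \cc \gamma(m)$ up to reparametrization. The key construction is to take $\omega_m$ as the concatenation of three pieces: (i) the canonical path $\xi_n \subset M_\infty(m_0)$ from $[(e,0,0)]$ to $[(m_0^n, n, 0)]$, which projects to $c_*$; (ii) the reverse of the $1$--simplex in $(M_\infty(m_0))_M$ corresponding to the action morphism $(m_0^n, [(e,n,0)]) \co [(e,n,0)] \to [(m_0^n,n,0)]$, giving a path from $[(m_0^n, n, 0)]$ to $[(e, n, 0)]$ whose $q$--image is $\overline{\gamma(m_0^n)}$; and (iii) the $1$--simplex corresponding to $(m, [(e,n,0)])\co [(e,n,0)]\to [(m,n,0)]$, whose $q$--image is $\gamma(m)$. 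Only (iii) depends on $m$, and it does so continuously.

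Setting $Q_m := q \circ \omega_m$ and $\beta_m(s)(t) := Q_m\bigl(s + (1-s)t\bigr)$, each $\beta_m(s)$ is a path from $q(\omega_m(s))$ to $*$, so $H(m,s) := (\omega_m(s), \beta_m(s))$ lies in $\hofib(q(M,m_0))$. The endpoint computations give $H(m,0) = ([(e,0,0)], Q_m)$ (with $Q_m$ a reparametrization of $\overline{\gamma(m_0^n)} \cc \gamma(m) = f_n(m)$) and $H(m,1) = ([(m,n,0)], c_*) = g\circ i_n(m)$; continuity in $m$ is immediate. Thus, after absorbing the standard loop reparametrization, $H$ is an unbased homotopy from $f\circ f_n$ to $g\circ i_n$.

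To conclude, recall that $\hofib(q(M,m_0)) \heq \Omega BM$ via $f$, and $\Omega_0 BM$ is simple as the identity component of an $H$--space (and $M_\infty(m_0)$ is simple by Lemma~\ref{simple}). Hence for any $\alpha \co (S^k, 1) \to (M, m_0^n)$, the unbased homotopy $H \circ (\alpha \times \Id_I)$ forces $f \circ f_n \circ \alpha$ and $g \circ i_n \circ \alpha$ to represent the same element of $\pi_k(\hofib(q(M,m_0)))$ once basepoints are identified canonically; applying $g_*^{-1}$ yields the result. The main obstacle is the bookkeeping in pieces (ii)--(iii) of $\omega_m$: verifying carefully that the $1$--simplex in $(M_\infty(m_0))_M$ attached to a morphism $(m,x)$ of the action category projects under $q$ to $\gamma(m)\in \Omega BM$, and that the concatenated projection $Q_m$ agrees up to reparametrization with $\overline{\gamma(m_0^n)} \cc \gamma(m)$.
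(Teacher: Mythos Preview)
Your proposal is correct and follows essentially the same approach as the paper. The paper's proof simply asserts that the diagram (\ref{htpy-commutes-diag}) is homotopy commutative, deferring the explicit construction of the homotopy to~\cite[Proof of Theorem 3.6]{Ramras-excision}; your three-piece path $\omega_m$ (the telescope path $\xi_n$, followed by the reversed $1$--simplex for the morphism $(m_0^n,[(e,n,0)])$, followed by the $1$--simplex for $(m,[(e,n,0)])$) is exactly the kind of construction the reference provides, and your observation that only the third piece depends on $m$ is what makes the assembled homotopy continuous. The appeal to simplicity at the end is appropriate and implicit in the paper's treatment, since the homotopy is unbased and the basepoint identifications must be made canonical.
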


Note that
every class in $\pi_k (M_\infty (m_0))$ has the form $(i_n)_* (\alpha)$ for some $\alpha\in \pi_k (M, m_0^n)$ (by (\ref{infty})), so Lemma~\ref{MS-iso} completely determines the isomorphism 
$$\pi_k (\Omega BM) \srm{\isom} \pi_k (M_\infty (m_0))$$
 induced by (\ref{MS1}).  By abuse of notation, we will denote this isomorphism by $i \circ f^{-1}$ from now on.

\begin{proof}[Proof of Lemma~\ref{MS-iso}]  
It suffices to show that the diagram
\begin{equation} \label{htpy-commutes-diag}
\xymatrix{ &M \ar[dl]_{f_{n}} \ar[dr]^{i_n} \\ \Omega BM \ar[r]^-\heq & \hofib (q(M, m_0)) & M_\infty (m_0), \ar[l]_-\heq}
\end{equation}
is homotopy commutative.
 This can be proven using the argument at the end of the proof of~\cite[Theorem 3.6]{Ramras-excision}.  That argument shows that the diagram commutes after passing to connected components, and it is routine to check that the paths constructed there give rise to a continuous homotopy.  
\end{proof}

\begin{remark}\label{rmk:pi0}
When $k=0$, the colimit $(\ref{colim})$ has a monoid structure defined as follows$:$ 
denoting elements in $\pi_k (M, m_0^n)$ $($$n = 0, 1, 2, \ldots$$)$ by pairs $([m], n)$ with $m\in M$, we define
$$[([m], n)] + [([m'], n')] = [([m\bt m'], n+n')].$$
This monoid structure is in fact a group structure since $M$ is stably group-like with respect to $[m_0]$, and the bijection
$$\pi_0 (\Omega BM) \isom \colim \limits_{n\to \infty} \pi_0 (M, m_0^n)$$
given by composing $(\ref{MS})$ and $(\ref{infty})$
is a monoid isomorphism $($see Ramras~\cite[Theorem 3.6]{Ramras-excision} for details$)$. 
\end{remark}

\begin{proof}[Proof of Proposition~\ref{sgl}]
Given a based homotopy class $\langle \phi\rangle  \in \pi_k (M, m_0^n)$, we define 
$$\Psi_n (\langle \phi\rangle ) = [\phi] \in \rh_k (M).$$
Since all constant maps are trivial in $\rh_k (M)$, the maps $\Psi_n$ are compatible with the structure maps for $\colim \limits_{n\to \infty} \pi_k (M, m_0^n)$ 
and induce a well-defined function
$$\Psi \co\colim \limits_{n\to \infty} \pi_k (M, m_0^n) \to \rh_k (M).$$
For  $k\geqs 1$, it follows from Lemma~\ref{EH} that $\Psi$ is a homomorphism, while for $k=0$, this is immediate from the definitions  (the monoid structure on 
$\colim \limits_{n\to \infty} \pi_0 (M, m_0^n)$ being that described in Remark~\ref{rmk:pi0}).
Let $\Phi =  \Psi\circ (i \circ f^{-1})$, where $i \circ f^{-1}$ is the map from Lemma~\ref{MS-iso}. We will show that $\wt{\Gamma}$ and $\Phi$ are inverses of one another.\footnote{It then follows that $\Phi$ is independent of $m_0$ and is natural.
Since $i\circ f^{-1}$ is an isomorphism, we also conclude that $\Psi$ is an isomorphism.}

First, consider  
$\wt{\Gamma} \circ \Phi$.
As noted above, each element of $\pi_k (\Omega BM)$ has the form $(f_n)_* \langle \phi \rangle$ for some   $\langle \phi \rangle \in \pi_k (M, m_0^n)$.
Now
\begin{align*}\wt{\Gamma} \circ \Phi \left(  (f_n)_* \langle \phi \rangle \right)  &   =  \wt{\Gamma} \circ \Psi ((i_n)_*  \langle \phi \rangle )    =    \wt{\Gamma} ([\phi])\\
& = J^{-1} \left( [\ol{\gamma(m_0^n)} \boxdot \gamma \circ \phi] \right)  =    (f_n)_* \langle \phi \rangle, \end{align*}
so $\wt{\Gamma}\circ \Phi$ is the identity map.

Next, consider the composition $\Phi \circ \wt{\Gamma}$.  
For each $k\geqs 0$, the group $\rh_k (M)$ is generated by classes of the form $[\phi]$ with $\phi\co S^k \to M$, so it will suffice to check that $\Phi \circ \wt{\Gamma} ([\phi]) = [\phi]$ for each $\phi\co S^k \to M$.
Since $M$ is stably group-like with respect to $[m_0]$, there exists $m\in M$ such that $\phi(1)\bt m$ lies in the path component of $m_0^n$ (for some $n$).  
The maps $\phi$ and  $\phi\bt c_m$ represent the same class in $\rh_k (M)$, so we may assume without loss of generality that $[\phi(1)] = [m_0^n]$, and in fact we may assume $\phi (1) = m_0^n$ since the basepoint $1\in S^k$ is non-degenerate.   
Now 
$$\wt{\Gamma} ([\phi]) = J^{-1} \left(  [\ol{\gamma (\phi (1))} \boxdot (\gamma \circ \phi)] \right) = J^{-1} \left( [\ol{\gamma(m_0^n)} 
\boxdot (\gamma \circ \phi)] \right) =   (f_n)_*  \la \phi \ra.$$
Applying $(i\circ f^{-1})$ to this element gives $(i_n)_*   \la \phi \ra $, which maps to $[\phi]$ under $\Psi$ as desired.
\end{proof}


\section{Proof of Theorem~\ref{gen-thm}}\label{sec:proof}

We will need a definition.

\begin{definition}$\label{sub}$
Given a topological monoid $M$ and a submonoid $N \subset \pi_0 (M)$, we define 
\begin{equation}\label{comp}\ol{N} = \{ m\in M\,|\, [m] \in N\}.\end{equation}
More generally, if $S$ is an arbitrary subset of $\pi_0 (M)$, we define 
$$\ol{S} = \ol{\la S \ra},$$ 
where $\la S \ra$ is the submonoid of $\pi_0 (M)$ generated by $S$.
 Note that $\ol{S}$ is a submonoid of $M$, and $\pi_0 \left(\ol{S}\right) = \la S \ra$.  Moreover, $\ol{S}$ is a union of path components of $M$. 
\end{definition}
 
Observe that if $M$ is homotopy commutative,  so is $\ol{S}$ (for every $S\subset \pi_0 (M)$), and if $m\in \ol{S}$ is  strongly 1--anchored in $M$, the same is true in $\ol{S}$.

\begin{proof}[Proof of Theorem~\ref{gen-thm}] 
 
Consider the set $ \mathcal{F}$ of all finite subsets of $\pi_0 (M)$, which forms a directed poset under inclusion.
For each $F\in \mathcal F$, let $\sigma(F) \in \pi_0 (M)$ be the product of all elements in $F$ (this is well-defined, since $M$ is homotopy commutative).
Since the subset of strongly 1--anchored components is cofinal in $\pi_0 (M)$, the set 
$$\mathcal{F}' \defn \{F\in \mathcal{F} \,:\, \sigma (F) \textrm{ is strongly 1--anchored} \}$$
is cofinal in $\mathcal{F}$ (in the sense that each $F\in \mathcal{F}$ is contained in some $F'\in \mathcal{F}'$), 
and hence the natural map
\begin{equation}\label{colim-F}\colim_{F\in \mathcal{F}'} \pi_0 \ol{F} \maps \pi_0 M\end{equation}
is bijective. 

Since $\wt{\Gamma}$ is a natural transformation, to prove the theorem it will suffice to show that $\wt{\Gamma}_{\ol{F}}$ is an isomorphism for each $F\in \mathcal{F}'$ and that the natural maps
\begin{equation}\label{c1}\colim_{F\in \mathcal{F}} \rh_k (\ol{F}) \maps  \rh_k (M)\end{equation}
and
\begin{equation}\label{c2}\colim_{F\in \mathcal{F}} \pi_k (\Omega B\ol{F}) \maps  \pi_k (\Omega BM)\end{equation}
are isomorphisms for each $k$.

To show that $\wt{\Gamma}_{\ol{F}}$ is an isomorphism for each $F\in \mathcal{F}'$, it suffices (by Proposition~\ref{sgl}) to check that $\ol{F}$ is stably group-like with respect to the element $\sigma(F)$. Letting $F = \{a_1, \ldots, a_l\}$,
each component of $\ol{F}$ has the form 
$$C = [a_1^{n_1}\bt a_2^{n_2}\cdots \bt a_{l}^{n_l}]$$
for some  $n_j\geqs 0$ ($j=1, 2, \ldots, l$).
Setting $N = \max \left\{n_1, \ldots, n_l\right\}$,
we have
$$C \bt [a_1^{N-n_1}\bt \cdots \bt  a_l^{N-n_l}] = [\sigma(F)^N],$$
so $\ol{F}$ is stably group-like with respect to $\sigma(F)$.

Next we show that (\ref{c1}) and (\ref{c2}) are isomorphisms.
For the former, it suffices to observe that since (\ref{colim-F}) is a bijection, every map from a path connected space (e.g. $S^k$ or $S^k \cross I$)  into $M$ factors through one of the embeddings $\ol{F}\injects M$.  
For the latter, it suffices to show that for each $k$, these embeddings induce an isomorphism
\begin{equation}\label{colim-eq}\colim_{F\in \mathcal{F}'} \pi_k  (B\ol{F}) \srm{\isom}  \pi_k (BM).\end{equation}

Given a topological monoid $P$, the singular simplicial set  $S. P$ has the structure of  a simplicial object in the category of (discrete) monoids, and we define $B (S. P)$ to be the (thick) geometric realization of the bisimplicial set $N.. (S. P)$ formed by applying the bar construction to each monoid $S_p P$ ($p\in \bbN$); thus the set of simplices of $N.. (S. P)$ in bi-degree $(p,q)$ is $(S_p P)^q \isom S_p (P^q)$.
Let $N. (S. P)$ be the simplicial space
$$q \goesto ||S. ( P^q)||,$$
formed by realizing $N.. (S. P)$ with respect to the index $p$,
so that we have a natural homeomorphism $||N. (S. P)||\isom B (S. P)$ (which we treat as an identification).
The natural maps $||S. (P^q)|| \srm{\heq} P^q$ are weak equivalences~\cite[Lemma 1.11]{ERW}, and
induce a map 
$N. (S.P) \to N. P,$
natural in $P$. This level-wise weak equivalence induces a weak equivalence 
\begin{equation}\label{S.}||B (S. P)|| \srm{\epsilon_P} B P.\end{equation}
by~\cite[Theorem 2.2]{ERW}.

Now consider the commutative diagram
\begin{equation}\label{sing}\xymatrix{ \displaystyle{\colim_{F\in  \mathcal{F}'} \pi_k  B (S. \ol{F})} \ar[r] \ar[d]^-{\colim{(\epsilon_{\ol{F}})_*}}   & \pi_k B (S. M) \ar[d]^{(\epsilon_M)_*}\\
\displaystyle{\colim_{F\in \mathcal{F}'} \pi_k (B \ol{F}) } \ar[r] & \pi_k (BM),}
\end{equation}
in which the vertical maps in (\ref{sing}) are both isomorphisms by (\ref{S.}).

The bottom map in Diagram (\ref{sing}) is the same as (\ref{colim-eq}), and to prove that this map is an isomorphism, it remains to observe that top map in Diagram (\ref{sing}) is an isomorphism.  By (\ref{colim-F}),  each singular simplex $\Delta^n \to M$ factors through one of the embeddings $\ol{F}\to M$, and hence the natural map
$$\colim_{F\in \mathcal{F}'} S. \ol{F} \maps S. M$$
is an isomorphism, as is the induced map
$$\colim_{F\in \mathcal{F}'} B.. (S. \ol{F}) \maps B.. (S. M).$$
This completes the proof, because
homotopy groups of the geometric realization commute with filtered colimits in the category of (bi)simplicial sets  (for the thin geometric realization, this is proven in~\cite[Proposition A.2.5.3]{DGM}, for instance, and the thick and thin geometric realizations are naturally homotopy equivalent by~\cite[Lemma 1.7 and Theorem 7.1]{ERW}). 
\end{proof}

\section{Examples}


\subsection{Lawson homology}\label{Lawson-sec}
The above results yield a concrete description of the homology groups for complex projective varieties introduced by Blaine Lawson~\cite{Lawson-homology}.
Let $X$ be a complex projective variety. Then associated to $X$ one has the topological abelian monoid $\mC_m (X)$ of effective $m$--dimensional cycles in $X$. Points in $\mC_m$ are finite formal sums $n_1 V_1 +\cdots +n_k V_k$ with $n_i\in \bbN$ and $V_i \subset X$ an irreducible subvariety of dimension $m$. Formal addition makes $\mC_m$ into an abelian monoid. Fixing an embedding $X\injects \bbC P^n$ allows one to define the degree of a subvariety $V\subset X$, and we can extend by linearity to obtain a homomorphism $\deg\co \mC_m \to \bbN$.
The subsets $\mC_{m,d} = \deg^{-1} (d)$ have the structure of projective varieties, and giving them their analytic topologies makes $\mC_m$ a \e{topological} abelian monoid with the empty sum as identity.  The Lawson homology groups of $X$ are, by definition, the bigraded abelian groups
$$L_m H_n (X) := \pi_{n-2m} \left(\Omega B \mC_m (X)\right).$$
For good introductions to this subject, see the surveys of Lima-Filho~\cite{LF-survey} and Peters--Kosarew~\cite{Peters-Kosarew}.


Since $\mC_m (X)$ is abelian, Theorem~\ref{gen-thm} gives the following result.

\begin{corollary} \label{L-cor} Let $X$ be a complex projective variety. Then there is a natural isomorphism
$$[S^{n-2m}, \mC_m (X)]/\pi_0 (\mC_m (X)) \srm{\isom} L_m H_n (X).$$
\end{corollary}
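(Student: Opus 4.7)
The plan is to derive Corollary~\ref{L-cor} as a direct application of Theorem~\ref{gen-thm} and Proposition~\ref{univ} to the monoid $\mC_m(X)$, exploiting the fact that formal addition of cycles is strictly commutative.

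First I would observe that $\mC_m(X)$ is a \emph{strictly} abelian topological monoid under formal addition of cycles, with the empty sum as a (two-sided) strict identity element. Consequently, as noted in the examples immediately following Definition~\ref{a-def}, every element of $\mC_m(X)$ is strongly anchored: one may simply take $H$ to be the constant homotopy between $\bt$ and $\bt \circ \tau$, so that the associated loop $\eta$ is constant and therefore acts trivially on all higher homotopy groups of $\mC_m(X)$. In particular, for every $k \geqs 0$, both the subset of strongly $1$--anchored components and the subset of strongly $k$--anchored components equal all of $\pi_0 \mC_m(X)$, which is trivially cofinal in itself.

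Next, I would invoke Theorem~\ref{gen-thm} with $M = \mC_m(X)$ to conclude that, for every $k \geqs 0$, the natural map
\[
\wt{\Gamma}\co \rh_k(\mC_m(X)) \srm{\isom} \pi_k(\Omega B\mC_m(X))
\]
is an isomorphism. By Proposition~\ref{univ} (applied with the cofinality hypothesis verified above), this group is naturally isomorphic to the quotient monoid $[S^k, \mC_m(X)]/\pi_0 \mC_m(X)$, where the submonoid $\pi_0 \mC_m(X)$ is identified with the nullhomotopic maps $S^k \to \mC_m(X)$.

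Finally, I would set $k = n - 2m$ and recall the definition $L_m H_n(X) = \pi_{n-2m}(\Omega B\mC_m(X))$ to obtain the advertised isomorphism. There is no real obstacle here, since the strict commutativity of cycle addition renders all the anchoring hypotheses automatic; the only point-set technicality worth remarking on is that the identity of $\mC_m(X)$ (the empty sum) is an isolated point of the monoid, which is more than enough to ensure that the bar construction $B\mC_m(X)$ agrees up to homotopy with its thin realization, so that there is no ambiguity in the definition of $L_m H_n(X)$.
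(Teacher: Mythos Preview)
Your proof is correct and follows essentially the same approach as the paper, which simply states ``Since $\mC_m(X)$ is abelian, Theorem~\ref{gen-thm} gives the following result.'' You have merely unpacked this one-line argument: verifying the anchoring hypotheses via strict commutativity, applying Theorem~\ref{gen-thm}, and then explicitly invoking Proposition~\ref{univ} to pass from $\rh_k$ to the quotient monoid $[S^k, \mC_m(X)]/\pi_0 \mC_m(X)$, a step the paper leaves implicit.
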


\subsection{The deformation representation ring}\label{Rdef-sec}
Next we turn to the deformation representation ring, introduced by Tyler Lawson~\cite{Lawson-simul}.
This is a direct topological analogue of the ordinary representation ring of a discrete group $G$. Consider the topological monoid
$$\ol{\Rep}(G) := \coprod_{n=0}^\infty \Hom(G, U(n))/U(n),$$
where the representation space $\Hom(G, U(n))$ is topologized as a subspace of $\Map(G, U(n))$ (with the compact-open topology), and $U(n)$ acts by conjugation.
Block sum of matrices makes this into a topological abelian monoid, and the unique element in $\Hom(G, U(0))/U(0)$ is the identity element. The deformation representation ring of $G$ is defined by
$$\Rdef_* (G) = \pi_* \left(\Omega B \ol{\Rep} (G)\right).$$
(Lawson shows that there is a ring structure on this graded abelian group, induced by tensor product of representations.) Again, 
Theorem~\ref{gen-thm} yields:

\begin{corollary} \label{Rdef-cor} For each discrete group $G$, there is a natural isomorphism
$$[S^k, \ol{\Rep} (G)]/\pi_0 \ol{\Rep} (G) \srm{\isom} \Rdef_k (G).$$
\end{corollary}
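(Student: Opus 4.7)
The plan is to reduce the statement directly to Theorem~\ref{gen-thm} together with Proposition~\ref{univ}. The first step is to verify that $\ol{\Rep}(G) = \coprod_{n} \Hom(G, U(n))/U(n)$ is a (strictly) abelian topological monoid under block sum. Although block sum of matrices $\rho_1 \oplus \rho_2$ and $\rho_2 \oplus \rho_1$ differ at the level of $\Hom(G, U(m+n))$, they are conjugate by the permutation matrix in $U(m+n)$ that swaps the two blocks, and hence represent the same element in the conjugation quotient $\Hom(G, U(m+n))/U(m+n)$. Thus the monoid operation is strictly commutative.

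Once strict commutativity is established, the examples following Definition~\ref{a-def} show that every element of $\ol{\Rep}(G)$ is strongly $k$--anchored for every $k\geqs 0$: one may take $H$ to be the constant homotopy, whose associated loop $\eta$ is the constant loop and therefore acts trivially on all homotopy groups. In particular the cofinality hypotheses of Theorem~\ref{gen-thm} (on both the strongly $1$--anchored components and the strongly $k$--anchored components) are trivially satisfied for every $k$.

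With the hypotheses verified, Theorem~\ref{gen-thm} yields a natural isomorphism
$$\wt{\Gamma}\co \rh_k\bigl(\ol{\Rep}(G)\bigr) \srm{\isom} \pi_k\bigl(\Omega B \ol{\Rep}(G)\bigr) = \Rdef_k(G).$$
To rewrite the left-hand side in the form stated in the corollary, I would invoke Proposition~\ref{univ} (for $k\geqs 1$), which identifies $\rh_k\bigl(\ol{\Rep}(G)\bigr)$ with the quotient monoid $[S^k, \ol{\Rep}(G)]/\pi_0\ol{\Rep}(G)$; composing these identifications yields the desired natural isomorphism. For $k=0$, the identification of $\rh_0$ with $\Gr(\pi_0 \ol{\Rep}(G))$ from (\ref{eq:k=0}) together with the standard isomorphism $\pi_0(\Omega B M)\isom \Gr(\pi_0 M)$ handles the base case (with the $/$ interpreted in the Grothendieck-group sense as in (\ref{Gr})).

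There is no serious obstacle here; the corollary is a formal consequence of the general theorem, and the only substantive verification is the strict commutativity observation in step one. Naturality in $G$ follows from naturality of $\wt{\Gamma}$ established in Proposition~\ref{Gamma-hom} together with functoriality of $\ol{\Rep}(-)$ in $G$.
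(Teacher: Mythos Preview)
Your proposal is correct and follows exactly the paper's approach: the paper simply states the corollary as an immediate consequence of Theorem~\ref{gen-thm}, having already noted in the preceding paragraph that block sum makes $\ol{\Rep}(G)$ a topological \emph{abelian} monoid. Your explicit verification of strict commutativity (via conjugation by the block-swap permutation) and your invocation of Proposition~\ref{univ} to pass from $\rh_k$ to the quotient $[S^k,\ol{\Rep}(G)]/\pi_0\ol{\Rep}(G)$ merely spell out details the paper leaves implicit.
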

 
 One can also consider versions of this construction with the unitary groups replaced by other families of Lie groups such as the orthogonal groups or the general linear groups (over $\bbR$ or $\bbC$), with similar results. 
 
\subsection{Deformation $K$--theory}$\label{Kdef-sec}$
We can use Theorem~\ref{gen-thm} to describe the homotopy groups of the unitary and general linear deformation $K$--theory spectra associated to finitely generated discrete groups.  
The homotopy groups of these spectra are simply the ordinary homotopy groups of the group completions of the monoids
$$\Rep(G)_{\hU} = \coprod_{n=0}^\infty \Hom(G, \U(n))_{\hU(n)}$$
and 
$$\Rep(G)_{\hGL} = \coprod_{n=0}^\infty \Hom(G, \GL(n))_{\hGL(n)},$$
where on the right-hand side we are taking the homotopy orbit space (Borel construction) with respect to the conjugation action. Using a functorial model for the universal bundles $EU(n)$ and $EGL(n)$ in forming the Borel construction, block sum of matrices endows the above spaces with the structure of topological monoids. Moreover, every element in each of these monoids is strongly anchored. See~\cite{Ramras-excision} for details.

Theorem~\ref{gen-thm} implies that for each $m\geqs 0$, the natural map
$$\wt{\Gamma} \co \rh_m (\Rep(G)_{\hU}) \maps \K_m (G)$$
is an isomorphism.
Our next goal is to describe these homotopy groups
explicitly 
in terms of spherical families of representations $S^m \to \Hom(G, \U(n))$.

\begin{definition}
Given a discrete group $G$, define
$$\Rep (G) = \Rep(G, \U) \defn \coprod_{n=0}^\infty \Hom(G, \U(n)).$$
Block sum of matrices makes $\Rep(G)$ into a topological monoid, with the unique element in 
$\Hom(G,\U(0))$
 as the  identity.  Replacing $\U(n)$ by $\GL(n)$, we obtain the monoid $\Rep(G, \GL)$.
\end{definition}

\begin{lemma}\label{sa-Hom}
Let $G$ be a discrete group.  Then each component in  $\Rep (G, \U)$ 
is $($strongly$)$ anchored, and the same holds for $\Rep(G, \GL)$.
\end{lemma}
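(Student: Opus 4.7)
The plan is to exhibit, for each $\rho \in \Hom(G, \U(n))$, an explicit anchoring homotopy $H$ whose track $\eta(t) = H(\rho,\rho,t)$ is literally the constant loop at $\rho \oplus \rho$; a constant loop represents the identity in $\pi_1$ and therefore acts trivially on every $\pi_k$. The starting observation is that the twist of block sum is implemented by conjugation: with $P = \begin{pmatrix} 0 & I_n \\ I_n & 0 \end{pmatrix} \in \U(2n)$ we have $P(\rho_1 \oplus \rho_2)P^{-1} = \rho_2 \oplus \rho_1$. Hence any continuous path $\gamma \co I \to \U(2n)$ from $I$ to $P$ produces a candidate homotopy $H(\rho_1, \rho_2, t) = \gamma(t)(\rho_1 \oplus \rho_2)\gamma(t)^{-1}$ with $H_0 = \oplus$ and $H_1 = \oplus \circ \tau$, and one checks that $H$ lands inside $[\rho \oplus \rho]$ on all of $[\rho] \cross [\rho] \cross I$ because conjugation preserves path components.

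The key step is to choose $\gamma$ inside the centralizer of $\rho \oplus \rho$, which forces $\eta$ to be constant. For this I use the block embedding $\U(2) \hookrightarrow \U(2n)$ sending $\begin{pmatrix} a & b \\ c & d \end{pmatrix}$ to $\begin{pmatrix} aI_n & bI_n \\ cI_n & dI_n \end{pmatrix}$. Every matrix in the image commutes with $\rho(g) \oplus \rho(g)$ for all $g \in G$, because each of its blocks is a scalar multiple of $I_n$. Since $\U(2)$ is path-connected and contains both $I$ and $\begin{pmatrix} 0 & 1 \\ 1 & 0 \end{pmatrix}$, I may take $\gamma$ to be any path in this embedded $\U(2)$ from $I$ to $P$. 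With this choice $\eta \equiv \rho \oplus \rho$, so $\rho$ is $k$-anchored for every $k \geqs 1$; the identical construction applied to each power $\rho^{\oplus m} \in \Hom(G,\U(mn))$ then shows $\rho$ is strongly anchored.

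For $\Rep(G, \GL)$ over $\bbC$ the same argument works verbatim because $\GL(2,\bbC)$ is path-connected. Over $\bbR$ the matrix $P$ may have negative determinant, so I replace it by the signed swap $\tilde P = \begin{pmatrix} 0 & I_n \\ -I_n & 0 \end{pmatrix}$, which still satisfies $\tilde P(\rho_1 \oplus \rho_2)\tilde P^{-1} = \rho_2 \oplus \rho_1$ but has determinant $+1$ regardless of $n$. A rotation path inside the $\mathrm{SO}(2) \hookrightarrow \GL(2n,\bbR)$ scalar-block embedding connects $I$ to $\tilde P$ and still commutes with $\rho \oplus \rho$, so the rest of the argument is unchanged. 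The main obstacle is simply organizing these explicit matrix computations; no input about the $\pi_1$-action on higher homotopy of representation spaces is required, since $\eta$ has been engineered to be constant.
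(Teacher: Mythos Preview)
Your proof is correct and follows essentially the same approach as the paper: both find a path $\gamma$ from the identity to the swap matrix $P=I_{n,n}$ lying entirely inside $\Stab(\rho\oplus\rho)$, so that the anchoring homotopy $H(\rho_1,\rho_2,t)=\gamma(t)(\rho_1\oplus\rho_2)\gamma(t)^{-1}$ has constant track $\eta$. The only difference is in how the path is produced: the paper invokes an external lemma (\cite[Lemma~4.3]{Ramras-excision}) asserting that a diagonalizable matrix in $\Stab(\rho\oplus\rho)$ can be connected to the identity inside that stabilizer, whereas you give the path explicitly via the scalar-block embedding $\U(2)\hookrightarrow\U(2n)$ (resp.\ $\GL(2,\bbC)\hookrightarrow\GL(2n,\bbC)$). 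Your version is self-contained and slightly more transparent; note, though, that in this paper $\GL(n)$ means $\GL(n,\bbC)$, so your real-case discussion is superfluous here.
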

\begin{proof} We will phrase the proof so as to apply to both the unitary and general linear cases.
Consider an $n$--dimensional representation $\rho$.  
Consider the permutation matrices
\begin{equation}\label{Imn}I_{m,n} = \br{\begin{array}{cc} 0_{nm}  & I_n     \\
					 I_m  &  0_{mn}     \\     
	 \end{array}},\end{equation}
where $0_{pq}$ denotes the $p\cross q$ zero matrix.
Note that if $A$ and $B$ are $m\cross m$ and $n\cross n$ matrices, respectively, then $I_{m,n}$ conjugates $A\oplus B$ to $B\oplus A$. Since $I_{m,n}$ is diagonalizable,
Ramras~\cite[Lemma 4.3]{Ramras-excision} implies that there exists a path $A_t$ in $\Stab (\rho\oplus \rho)$ (the stabilizer under the conjugation action) with $A_0 = I_{2n}$ and $A_1 = I_{n,n}$.  Now 
$$(\psi, \psi') \goesto A_t (\psi \oplus \psi') A_t^{-1}$$ 
defines a homotopy anchoring $\rho$.
\end{proof}

Let $K^{-m} (*)$ denote the complex $K$--theory of a point, so $K^{-m} (*) = \bbZ$ for $m$ even and $K^{-m} (*) = 0$ for $m$ odd.

\begin{theorem}$\label{Kdef}$ Let $G$ be a discrete group.  Then there are natural  isomorphisms 
$$ \K_m (G) \isom \rh_m (\Rep (G)) \oplus K^{-m} (*)$$
for each $m > 0$, as well as natural isomorphisms
$$\K_0 (G) \isom  \rh_0 (\Rep(G)) \isom \Gr (\pi_0 \Rep(G)).$$
Analogous  isomorphisms exist in the general linear case, so long as $G$ is finitely generated.
\end{theorem}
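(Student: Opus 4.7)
The plan is to apply Theorem~\ref{gen-thm} to the monoid $\Rep(G)_{\hU}$, and then to decompose $\rh_m(\Rep(G)_{\hU})$ using the section of the forgetful map $\Rep(G)_{\hU}\to \coprod_n B\U(n)$ provided by the trivial representation. First I would verify the hypothesis of Theorem~\ref{gen-thm} for $\Rep(G)_{\hU}$: the argument of Lemma~\ref{sa-Hom} adapts, since the path $A_t \in \Stab(\rho \oplus \rho) \subset \U(2n)$ connecting $I_{2n}$ to $I_{n,n}$ gives a $\U(2n)$-equivariant homotopy between block sum and its twist on $\Hom(G,\U(n))^{\times 2}$ that is constant on $(\rho,\rho)$; using a functorial model of $E\U(\cdot)$, this descends to a homotopy anchoring $[\rho]$ in the Borel construction. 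Theorem~\ref{gen-thm} then yields $\K_m(G) \cong \rh_m(\Rep(G)_{\hU})$ for all $m \geq 0$. For $m = 0$: conjugation by the connected group $\U(n)$ preserves path components, so $\pi_0 \Rep(G)_{\hU} \cong \pi_0 \Rep(G)$, and the isomorphism $\rh_0 \cong \Gr(\pi_0)$ of $(\ref{eq:k=0})$ completes this case.

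\textbf{The splitting.} For $m \geq 1$, I would decompose $\rh_m(\Rep(G)_{\hU})$ using three monoid maps under block sum:
$$\Rep(G) \xrightarrow{\,i\,} \Rep(G)_{\hU} \xrightarrow{\,p\,} \coprod_{n} B\U(n), \qquad s\co \coprod_n B\U(n) \to \Rep(G)_{\hU}.$$
Here $p$ forgets the representation; $s$ is induced by the trivial representation $\rho_0$, which is a fixed point of conjugation so that $E\U(n)\times_{\U(n)} \{\rho_0\} = B\U(n)$ embeds as the image of $s$; and $i$ is fiber inclusion over a chosen basepoint of $E\U(n)$. These satisfy $p \circ s = \id$, while $p \circ i$ factors through a point on each rank. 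The Borel-construction fibration $\Hom(G,\U(n)) \to \Hom(G,\U(n))_{\hU(n)} \to B\U(n)$, together with the section induced by $s$ and the simple-connectivity of $B\U(n)$, yields via the long exact sequence split short exact sequences
$$0 \to \pi_m \Hom(G,\U(n)) \to \pi_m \Hom(G,\U(n))_{\hU(n)} \to \pi_m B\U(n) \to 0$$
at each $n$ and each $m \geq 1$.

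\textbf{Descent and identification.} To pass from these per-rank $\pi_m$-splittings to a splitting of $\rh_m(\Rep(G)_{\hU})$, I would invoke Proposition~\ref{univ}: every class in $\rh_m(\Rep(G)_{\hU})$ admits a representative $f\co S^m \to \Hom(G,\U(n))_{\hU(n)}$ at some rank $n$, and strong anchoring (Lemma~\ref{simple}) trivializes the relevant $\pi_1$-actions on $\pi_m$, removing the based/unbased distinction. Quotienting by constant maps and stabilizing across ranks via block sum (as justified by Proposition~\ref{univ} and Lemma~\ref{EH}) preserves the splitting, yielding
$$\rh_m(\Rep(G)_{\hU}) \cong \mathrm{im}(i_*) \oplus \mathrm{im}(s_*).$$
Homotopy lifting for the fibration identifies the kernel of $p_*$ with $\mathrm{im}(i_*) \cong \rh_m(\Rep(G))$. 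Applying Theorem~\ref{gen-thm} to the homotopy commutative monoid $\coprod_n B\U(n)$ (anchoring again furnished by permutation matrices acting via classifying maps) together with the standard identification $\Omega B\bigl(\coprod_n B\U(n)\bigr) \simeq \bbZ \times BU$ gives $\mathrm{im}(s_*) \cong \rh_m\bigl(\coprod_n B\U(n)\bigr) \cong K^{-m}(*)$, establishing the $m \geq 1$ case.

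\textbf{General linear case and main obstacle.} For finitely generated $G$, a $\GL_n(\bbC)$-equivariant polar-decomposition argument (cf.~\cite{Ramras-excision}) produces a deformation retract $\Hom(G, \GL_n(\bbC)) \to \Hom(G, \U(n))$, and hence a weak equivalence of monoids $\Rep(G,\GL)_{\hGL} \simeq \Rep(G)_{\hU}$; this reduces the $\GL$ statements to the unitary ones. The main obstacle is the descent step: verifying that the per-rank $\pi_m$-splittings pass coherently through the colimit and quotient defining $\rh_m$ (which involves both modding out by constant maps and stabilizing across ranks). This requires combining strong anchoring (Lemma~\ref{sa-Hom}), the Eckmann--Hilton-style Lemma~\ref{EH}, and the universal property of Proposition~\ref{univ} to guarantee that the splitting produced at each rank is compatible with block-sum stabilization and survives the passage to $\rh_m$.
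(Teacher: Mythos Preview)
Your overall architecture matches the paper's, but the step you flag as the ``main obstacle'' is not the real difficulty; the actual gap is one step earlier, in the claim that the section $s$ produces a split short exact sequence
\[
0 \maps \pi_m \bigl(\Hom(G,\U(n)), \rho\bigr) \xmaps{(i_n)_*} \pi_m \bigl(\Hom(G,\U(n))_{\hU(n)}, i_n(\rho)\bigr) \xmaps{p_*} \pi_m (B\U(n),*) \maps 0
\]
at every rank $n$ and every basepoint $\rho$. The section $s$ lands in the path component of the \e{trivial} representation $I_n$, so it splits the long exact sequence only when the basepoint $i_n(\rho)$ lies in that component. At a general basepoint the boundary map $\partial\co \pi_{m+1} B\U(n) \to \pi_m(\Hom(G,\U(n)),\rho)$ can be nonzero. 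For a concrete failure, take $G=\bbZ/2\bbZ$, $n=2$, and $\rho=\mathrm{diag}(1,-1)$. The component of $\rho$ in $\Hom(\bbZ/2,\U(2))$ is its conjugation orbit $O_\rho\isom \U(2)/(\U(1)\cross\U(1))\isom S^2$, and the long exact sequence of $S^2 \to B(\U(1)\cross\U(1)) \to B\U(2)$ shows that $\partial\co \pi_4 B\U(2) \to \pi_3 S^2$ is an isomorphism $\bbZ\to\bbZ$. Hence $(i_2)_*$ has nontrivial kernel at this basepoint and $m=3$, so your displayed sequence is not exact on the left. Without injectivity of $(i_n)_*$ at general basepoints, you cannot conclude that $\mathrm{im}(i_*)\isom \rh_m(\Rep(G))$ after passing to $\rh_m$.

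The paper repairs exactly this: Lemma~\ref{bdry} shows that if $\rho$ contains at least $k$ copies of each of its irreducible summands \e{and} at least $k$ trivial summands, then $(i_n)_*$ is injective for $m\leqs 2k$. The proof uses Schur's Lemma to identify $\Stab_\rho \isom \prod_i \U(n_i)$ with each $n_i\geqs k$ and then a stable-range argument to force $\partial=0$. In the injectivity part of the paper's proof of Theorem~\ref{Kdef}, one first block-sums a given $\rho\co S^m\to \Hom(G,\U(n))$ with enough constants (specifically $m$ copies of everything in sight plus $I_m$) to land in the range where Lemma~\ref{bdry} applies, and only then invokes injectivity of $(i_N)_*$. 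Your ``descent'' machinery would go through once this stabilization trick and Lemma~\ref{bdry} are in place; without them the argument does not close.

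A secondary issue: your reduction of the general linear case to the unitary one via a $\GL_n(\bbC)$--equivariant deformation retraction $\Hom(G,\GL_n(\bbC))\to\Hom(G,\U(n))$ is not available in general (representations that are not completely reducible need not lie in the same component as any unitary representation in a way compatible with such a retract). The paper instead handles the $\GL$ case inside Lemma~\ref{bdry}, using that every path component of $\Hom(G,\GL(n))$ (for $G$ finitely generated) contains a completely reducible representation, so that the Schur-type description of $\Stab_\rho$ is available after moving the basepoint within its component.
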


We will need a lemma regarding the fibrations
\begin{equation} \label{fibn} \Hom (G, \U(n)) \srm{i_n} \Hom (G, \U(n))_{\hU(n)} \srm{q_n}  B\U(n).\end{equation}

\begin{lemma}\label{bdry}
Let $\rho\co G\to \U(n)$ be a representation of the form $\rho \isom \rho'\oplus I_{k_0}$, and assume that
$\rho'$ decomposes further as a direct sum of the form
$$\rho' = \bigoplus_{i=1}^r k_i \rho'_i$$
for some (non-zero) representations $\rho'_i$, 
where $k_i \rho'_i$ denotes the $k_i$--fold block sum of $\rho'_i$ with itself.
Let $k = \min\{k_0, k_1, \ldots, k_r\}$, 
Then for $m\leqs 2k$, the map $i_n$ appearing in the fibration sequence $(\ref{fibn})$ induces an injection
$$(i_n)_* \co \pi_m (\Hom(G, \U(n)), \rho) \injects \pi_m ( \Hom (G, \U(n))_{\hU(n)}, i_n (\rho)).$$

The analogous statement holds with $\U(n)$ replaced by $\GL (n)$, so long as $G$ is finitely generated.   
\end{lemma}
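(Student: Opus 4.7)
The plan is to extract injectivity from the long exact sequence of homotopy groups associated to the Borel fibration $(\ref{fibn})$. Concretely, $(i_n)_*$ is injective on $\pi_m$ if and only if the connecting homomorphism
$$\partial\co \pi_{m+1}(B\U(n)) \isom \pi_m(\U(n)) \maps \pi_m(\Hom(G,\U(n)), \rho)$$
vanishes, where the isomorphism on the left comes from the path--loop fibration $\U(n)\to E\U(n) \to B\U(n)$. A standard analysis of $(\ref{fibn})$ as the associated bundle $E\U(n) \cross_{\U(n)} \Hom(G,\U(n))$ shows that under this identification, $\partial$ is induced by the orbit map $o_\rho\co \U(n) \maps \Hom(G,\U(n))$, $A\goesto A\rho A^{-1}$. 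For $m=0$ the claim is immediate since $\U(n)$ is connected, so assume $m\geqs 1$.

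The key observation is that the trivial summand $I_{k_0}$ in the decomposition $\rho \isom I_{k_0}\oplus \rho'$ forces $o_\rho$ to be constant on the top-left block inclusion $\iota\co \U(k_0)\injects \U(n)$ defined by $A_0 \goesto \mathrm{diag}(A_0, I_{n-k_0})$. Indeed, a direct computation using the block structure yields $\iota(A_0)\rho(g) \iota(A_0)^{-1} = \rho(g)$ for every $g\in G$ and every $A_0 \in \U(k_0)$, since $A_0$ conjugates $I_{k_0}$ to itself and $I_{n-k_0}$ commutes with $\rho'(g)$. Consequently the composition
$$\pi_m(\U(k_0)) \xmaps{\iota_*} \pi_m(\U(n)) \xmaps{(o_\rho)_*} \pi_m(\Hom(G,\U(n)),\rho)$$
is the zero map. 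To conclude that $(o_\rho)_* = 0$ on $\pi_m(\U(n))$, it therefore suffices to show that $\iota_*$ is surjective for $m\leqs 2k$.

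This is a classical unitary stability statement: from the principal bundle $\U(j) \injects \U(j+1)\to S^{2j+1}$ and the vanishing $\pi_m(S^{2j+1})=0$ for $m\leqs 2j$, each single-step inclusion $\U(j)\injects \U(j+1)$ is surjective on $\pi_m$ for $m\leqs 2j$. Iterating along the chain $\U(k_0)\injects \U(k_0+1)\injects \cdots \injects \U(n)$ gives surjectivity of $\iota_*$ for $m\leqs 2k_0$; since $k\leqs k_0$ by definition of $k$, this covers the required range $m\leqs 2k$ and finishes the unitary case.

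For the general linear case with $G$ finitely generated, the polar decomposition provides a $\U(n)$--equivariant deformation retraction $\Hom(G, \GL(n)) \heq \Hom(G, \U(n))$ (see Ramras~\cite{Ramras-excision}); combined with the homotopy equivalence $\U(n)\heq \GL(n)$, the argument above carries over verbatim with $\U$ replaced by $\GL$ throughout. The most delicate point in the proof is the identification of $\partial$ with $(o_\rho)_*$ in the first step, which is standard but requires either explicit model-level computations on the bar construction or an appeal to general associated-bundle machinery; once that is in hand, the remainder is a short classical connectivity computation.
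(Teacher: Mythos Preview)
Your unitary argument is correct and in fact more direct than the paper's. Both proofs reduce to showing that the connecting map $\partial\co \pi_{m+1}(B\U(n))\to \pi_m(\Hom(G,\U(n)),\rho)$ vanishes, but the paper factors $\partial$ through the orbit $O_\rho\subset\Hom(G,\U(n))$, computes $\Stab_\rho$ via Schur's Lemma using the \emph{full} decomposition of $\rho$, and then shows $\pi_m(O_\rho)=0$ for odd $m<2k$ from the long exact sequence of $\Stab_\rho\to\U(n)\to O_\rho$. You instead observe that only the $I_{k_0}$ summand is needed: since $\U(k_0)\leqs\Stab_\rho$, the orbit map kills the image of $\iota_*\co\pi_m(\U(k_0))\to\pi_m(\U(n))$, and unitary stability makes $\iota_*$ surjective for $m\leqs 2k_0$. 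This is shorter, avoids Schur's Lemma entirely, and even yields the stronger range $m\leqs 2k_0$ rather than $m\leqs 2k$.

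There is a gap in your $\GL$ paragraph, however. The claimed $\U(n)$--equivariant deformation retraction $\Hom(G,\GL(n))\heq\Hom(G,\U(n))$ is not established in~\cite{Ramras-excision} and is not known for arbitrary finitely generated $G$; indeed, this is precisely why the paper invokes a GIT argument to find a completely reducible representation in the path component of $\rho$ (so that Schur's Lemma again describes $\Stab_\rho$). The good news is that your own argument does not need any such retraction: your three steps go through verbatim with $\GL$ in place of $\U$, since the identification of $\partial$ with the orbit map, the containment $\GL(k_0)\leqs\Stab_\rho$ coming from the block summand $I_{k_0}$, and the surjectivity of $\pi_m(\GL(k_0))\to\pi_m(\GL(n))$ for $m\leqs 2k_0$ (via $\GL\heq\U$) all hold without any hypothesis on complete reducibility. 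So drop the retraction claim and run your argument directly in $\GL$; you then bypass the paper's GIT detour as well.
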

\begin{proof} We begin by proving the unitary case. At the end, we will explain the additional arguments needed in the general linear case.

If $k=n$ (that is, if $\rho = I_n$), then this follows from the fact that the fibration (\ref{fibn}) admits a splitting sending $[e, *]\in B\U(n)\isom\{*\}_{\hU(n)}$ to $[e, I_n] \in \Hom (G, \U(n))_{\hU(n)}$; this splitting is just the map on homotopy orbit spaces induced by the $\U(n)$--equivariant inclusion $\{I_n\} \injects \Hom(G, \U(n))$ (note that in this case, $(i_n)_*$ is injective in all dimensions, not just when $m\leqs 2k=2n$).
So we will assume $k<n$.

It will suffice to show that  for $m\leqs 2k$, the boundary map  
$$\partial \co \pi_{m+1} (B\U(n), *) \maps \pi_m (\Hom(G, \U(n)), \rho)$$
is zero.  Since we have assumed $k < n$, we have  $m \leqs 2k < 2n$, which means $\pi_{m+1} (B\U(n), *)$ is in the stable range, and is zero for $m$ even.  Hence we will assume $m$ is odd for the remainder of the proof.

Let $O_\rho \subset \Hom(G, \U(n))$ denote the conjugation orbit of $\rho$.  Then $\partial$ factors through the boundary map for the fibration
$$O_\rho \maps (O_\rho)_{\hU(n)}\maps B\U(n),$$
and we claim that $\pi_m (O_\rho) = 0$ if $m$ is odd and  less than $2k$, which will complete the proof.  Letting $\Stab_\rho \leqs \U(n)$ denote the stabilizer of $\rho$ under conjugation, Schur's Lemma implies that 
\begin{equation}\label{stab}\Stab_\rho \isom \prod_i \U(n_i),
\end{equation}
where the numbers $n_i$ are the multiplicities of the irreducible summands of $\rho$.  Our assumption on $\rho$ implies that $n_i\geqs k$ for each $i$.

The inclusion 
$$\Stab(I_{k_0}) \isom \U(k_0)  \injects \Stab_\rho  \injects \U(n)$$ is homotopic to the standard inclusion of $\U(k_0)$ into $\U(n)$, and hence is an isomorphism on homotopy in dimensions less than  $2k_0$, and in particular, this map is an isomorphism on homotopy in dimensions less than  $2k$.  
This implies that the inclusion
$\Stab_\rho \injects \U(n)$
is surjective on homotopy in dimensions less than $2k$, and consequently the boundary map
for the fibration sequence
$$\Stab_\rho \maps \U(n) \srm{q} O_\rho$$
is an injection
$$\pi_m (O_\rho) \injects \pi_{m-1} (\Stab_\rho )$$
 for $m\leqs 2k-1$.  But we are assuming that $m$ is  odd and $m<2k\leqs 2n_i$ 
 for each $i$, so
$$\pi_{m-1} \Stab_\rho \isom \prod_i \pi_{m-1} \U(n_i) = 0.$$
It follows that $\pi_m (O_\rho) = 0$ when $m$ is odd and less than $2k$.  This completes the proof in the unitary case.

To extend this argument to the general linear case, we need to analyze $\Stab_\rho$ for representations $\rho\co G\to \GL(n)$, with $G$ finitely generated.
If $\rho$ is completely reducible (that is, isomorphic to a block sum of irreducible representations) 
then a decomposition analogous to (\ref{stab}) still holds, but this can fail if $\rho$ is not completely reducible.
We claim there exists a representation $\rho^{\textrm{red}}$ in the path component of $\rho$ which is completely reducible and still satisfies the hypotheses of the Lemma.  This will suffice, since the map $(i_n)_*$ is independent (up to isomorphism) of the chosen basepoint within the path component of $\rho$.  Note that the rest of the unitary argument applies equally well in the general linear case, since $\GL(n) \heq \U(n)$.

To prove the existence of $\rho^{\textrm{red}}$, we appeal to some results from algebraic geometry.
If $G$ is generated by, say, $l$ elements ($l < \infty$), then  
$$\Hom(G, \GL (n)) \subset \GL (n)^l \isom \{(A,B) \in M_{n\cross n} \bbC \isom \bbC^{n^2} : AB = I\}^l$$
is an affine variety, cut out   by the ideal of polynomials corresponding to the relations in $G$.
A basic result in Geometric Invariant Theory states for every conjugation orbit   $O \subset \Hom (G, \GL (n))$, there exists a (unique) completely reducible representation inside the closure $\ol{O}$ (in general, orbit closures of complex reductive groups acting on affine
 varieties contain unique closed orbits~\cite[Corollary 6.1 and Theorem 6.1]{Dolgachev}, and in the present situation complete reducibility is equivalent to having a closed orbit~\cite[Theorem 1.27]{LM}).  This implies that every path component of $\Hom (G, \GL(n))$ contains a completely reducible representation, since $ \Hom (G,\GL (n))$ is triangulable (as are all affine varieties~\cite{Hironaka}). 
 
 Recall that we have a decomposition 
$$\rho =\left( \bigoplus_i k \rho'_i \right) \oplus I_k.$$
Letting $\rho''_i$ denote the completely reducible representation in the path component of 
$\rho'_i$, we see that 
$$\rho^{\textrm{red}} :=\left( \bigoplus_i k \rho''_i \right) \oplus I_k$$
 is again completely reducible,   lies in the same path component as $\rho$, and satisfies the hypotheses of the Lemma, as desired.
\end{proof}

\begin{proof}[Proof of Theorem~\ref{Kdef}]  For $m=0$, this is elementary (see the proof of~\cite[Lemma 2.5]{Ramras-excision}), so we will assume $m>0$.  We will  work in the unitary case; the proof in  the general linear case is the same.

Lemma~\ref{sa-Hom}, Theorem~\ref{gen-thm}, and Proposition~\ref{univ} give natural isomorphisms
$$\pi_m \left(\Omega B \Rep (G) \right) \isom \rh_m (\Rep(G)) \isom [S^m, \Rep(G)]/\pi_0 \Rep (G).$$
To obtain natural isomorphisms between
$$ \K_m (G) = \pi_m \left( \Omega B \left(\Rep(G)_{\hU}\right)\right)$$
and
$$\rh_m(\Rep(G)) \oplus K^{-m} (*),$$
($m > 0$) it remains only to show that there are natural isomorphisms
\begin{equation}\label{split}\pi_m \left(\Omega B \left(\Rep(G)_{\hU}\right) \right)\isom 
\pi_m \left( \Omega B \left(\Rep(G)\right)\right)  \oplus K^{-m} (*).
\end{equation}

Each representation space for the trivial group is a single point, so
$$\Rep(\{1\})_{\hU} \isom \coprod_n B\U(n),$$
and this monoid is stably group-like with respect to each of its components.
Hence the Group Completion Theorem, together with Bott Periodicity, gives us isomorphisms
$$\pi_m \left(\Omega B\left( \Rep(\{1\})_{\hU}  \right) \right)\isom K^{-m} (*),$$
and this will be our model for $K^{-m} (*)$.

The  inclusion $\{1\} \injects G$ induces a map of monoids
\begin{equation}\label{q} q \co \Rep(G)_{\hU} \maps \Rep(\{1\})_{\hU},\end{equation}
and hence a map 
$$\Omega B q\co \Omega B \left( \Rep(G)_{\hU}\right) \maps \Omega B\left( \Rep(\{1\})_{\hU}  \right).$$
The maps
\begin{equation}\label{in} i_n \co  \Hom(G, \U(n))\injects \Hom(G, \U(n))_{\hU(n)}
\end{equation}
combine into a monoid homomorphism
$$i\co \Rep(G)\injects \Rep(G)_{\hU},$$
and we have an induced map 
$$\Omega B i\co \Omega B \Rep (G) \maps\Omega B\left( \Rep(G)_{\hU}  \right).$$  
To complete the proof, it will suffice to show that the sequence 
\begin{equation}\label{ses} 0\xmaps{} \pi_m \left(\Omega B \Rep (G) \right)\xmaps{i_*}  \pi_m \left( \Omega B\Rep(G)_{\hU}\right) \xmaps{q_*}
 \pi_m \left( \Omega B \Rep(\{1\})_{\hU}\right)\xmaps{} 0
 \end{equation}
 is split exact for each $m\geqs 1$ (note that we are abbreviating $(\Omega B i)_*$ to $i_*$ and $(\Omega B q)_*$ to $q_*$).
The map $q_*$ admits a right inverse, induced by the projection $G\to \{1\}$, so the sequence (\ref{ses}) splits and $q_*$ is surjective. 
Next, the composite $q\circ i$ factors through the discrete monoid $\bbN$.  Since $\pi_m ( \Omega B\bbN) = 0$ for $m > 0$, we have $q_* \circ i_* = 0$.  To prove exactness of (\ref{ses}), it remains to show that $\ker(q_*) \subset \Img (i_*)$ and $\ker(i_*) = 0$.  We will prove these statements by directs argument using Theorem~\ref{gen-thm}, which provides an isomorphism between (\ref{ses}) and the corresponding sequence obtained by applying the functor $\rh_m$ in place of $\pi_m (\Omega B)$.  Hence for the rest of the argument  we will work 
with the sequence
\begin{equation}\label{ses'} 0\maps \rh_m   \Rep (G) \srm{i_*}  \rh_m\left(\Rep(G)_{\hU}\right) \srm{q_*}
 \rh_m\left( \Rep(\{1\})_{\hU}\right) \maps 0.
 \end{equation}

First we show that $\ker(q_*) \subset \Img (i_*)$.    
By Proposition~\ref{univ}, 
each element in $\ker (q_*)$ is represented by a map $\psi \co S^m \to \Hom(G, \U(k))_{\hU(k)}$ such that for some constant map $c\co S^m\to B\U(k')$, the map
$(q\circ \psi) \oplus c$ is nullhomotopic.  It follows that for any constant map $\wt{c} \co S^m \to \Hom(G, \U(k'))_{\hU(k')}$, the homotopy class  
 $\la \psi \oplus \wt{c}\ra$ lies in the kernel of   
$$q_*\co \pi_* \left( \Hom (G, \U(k+k'))_{\hU(k+k')} \right) \maps \pi_* B\U(k+k')$$
 (for appropriately chosen basepoints).  
Since the sequence (\ref{fibn})
is a fibration sequence for each $n\geqs 0$, there exists a map 
$$\rho\co S^m \to \Hom(G, \U(k+k'))$$
 such that $i_n\circ \rho\heq \psi \oplus \wt{c}$, 
and this shows that $[\psi]$ is in the image of the map $i_*$ in (\ref{ses'}).

The proof that $\ker(i_*) = 0$ is similar, but will require Lemma~\ref{bdry}.  Each element in $\ker(i_*)$ is represented by a map $\rho\co S^m\to \Hom(G, \U(n))$ such that 
\begin{equation}\label{psi} (i\circ \rho) \oplus d \heq  c  \end{equation} 
for some  constant maps $c, d \co S^m \to \Rep(G)_{\hU}$. 
Let
$m\rho(1) \oplus I_m$ denote the constant map $S^m\to \Rep(G)$ with image $m\rho(1) \oplus I_m$.
Adding 
$$md \oplus mc \oplus i ((m-1)\rho(1) \oplus I_m),$$
to both sides of (\ref{psi}) gives
\begin{eqnarray}\label{big}(i\circ \rho) \oplus (m+1)d \oplus    mc \oplus i ((m-1)\rho(1) \oplus I_m)\\
\heq  (m+1)c \oplus  md   \oplus i ((m-1)\rho(1) \oplus I_m).\notag
\end{eqnarray}

Since $\U(n)$ is path connected, the map $i_n$ induces a bijection on path components for every $n$.  Thus there exist constant maps $\wt{c}$  and $\wt{d}$ such that  
\begin{equation}\label{pr}i\circ \wt{c} \heq c\,\,\, \textrm{ and } \,\,\, i\circ \wt{d} \heq d.
\end{equation}
Setting 
$$e = (m+1)\wt{d} \oplus m\wt{c} \oplus (m-1)\rho(1) \oplus I_m,$$
Equation (\ref{big}) gives
\begin{eqnarray}i \circ (\rho \oplus e) 
\heq i \circ ((m+1)\wt{c} \oplus m\wt{d}  \oplus (m-1)\rho(1) \oplus I_m), 
\end{eqnarray}
so in particular $i \circ (\rho \oplus e)$
is nullhomotopic. (Note that while the above nullhomotopy need not be based, 
a based map out of a sphere that is nullhomotopic in the unbased sense is also nullhomotopic in the based sense, since basepoint in the sphere is non-degenerate.)
Moreover, we have an isomorphism of representations
$$\rho(1) \oplus e(1) \isom m\rho(1)\oplus (m+1) \wt{d}(1) \oplus m \wt{c}(1) \oplus I_m,$$
so, letting $N=\dim(\rho \oplus e)$, Lemma~\ref{bdry} implies that
 the map
$$\pi_m  \left(\Hom(G, \U(N))\right)\xmaps{(i_N)_*} \pi_m   \left(\Hom(G, \U(n))_{\hU(N)}\right)
$$
 is injective if we use $(\rho \oplus e) (1)$ as our basepoint for $ \Hom(G, \U(N))$.
Since $i \circ (\rho \oplus e)$
is nullhomotopic,  we conclude that $\rho \oplus e$ is nullhomotopic as well, and since $e$ is constant it follows that $[\rho] = 0$ in $\rh_m \Rep(G)$.  This completes the proof that (\ref{ses'}) is exact, and also completes the proof of the Theorem.
\end{proof}

\begin{question} Does the analogue of Theorem~\ref{Kdef} hold for representations valued in $O(n)$ or $\GL(n, \bbR)$?
\end{question}

\section{Applications to families of characters}$\label{char-sec}$
In this section, all discrete groups will be assumed to be finitely generated.
We now give some applications of the preceding results to spherical families of (unitary) characters, that is, maps
$$\chi\co S^k \maps \Hom(G, U(n))/U(n).$$
We introduce two basic properties of such families.

\begin{definition} Consider a family of characters 
$$\chi \co X \maps \Hom(G, U(n))/U(n).$$

We say that $\chi$ is \e{stably nullhomotopic} if there exists a character $\chi_0 \in \Hom(G, U(n))/U(n)$ such that $\chi \oplus \chi_0$ is nullhomotopic.

We say that $\chi$
is \e{geometric} if there is a family of representations
$$\rho \co X \to \Hom(G, U(n))$$
such that $\chi \heq \pi \circ \rho$, where $\pi \co \Hom(G, U(n))\to \Hom(G, U(n))/U(n)$ is the quotient map. We say that $\chi$ is \e{stably geometric} if there exists a character $\chi_0$ such that $\chi\oplus \chi_0$ is geometric.
\end{definition}

The term \e{geometric} is motivated by the observation that when $G$ is the fundamental group of a manifold $M$, a family $\chi$ is geometric if and only if it is homotopic to the holonomy of some family of flat bundles over $M$ (parametrized by $X$).

Our descriptions of $\pi_* \K(G)$ and $\pi_* \Rdef (G)$ lead to close relationships between these concepts and the \e{Bott map} on deformation $K$--theory. To work with this map, we will view $\K(G)$ as an $\bS$--module in the sense of~\cite{EKMM}; see~\cite[Section 6]{Ramras-stable-moduli} for a discussion of the necessary concepts regarding $\bS$--modules and their applications in this context. The main point we need is that there are $\bS$--modules $\Rdef (G)$ and $\K(G)$  whose homotopy groups are naturally isomorphic to  $\Rdef_* (G)$ and $\K_* (G)$ (respectively) as defined in Sections~\ref{Rdef-sec} and~\ref{Kdef-sec}.

The Bott map takes the form 
$$\beta \co \Sigma^2 \K(G)\to \K(G),$$
and is induced from the classical Bott map $\Sigma^2 \ku \to \ku$ on the connective $K$--theory spectrum $\ku = \K(1)$ 
by smashing (over $\ku$) with the identity map on $\K (G)$. Note here that tensor product of representations makes $\K (G)$ into a \e{ring spectrum} (this was first proven in~\cite{Lawson-prod}) and in fact an $\bS$--algebra. The 
 natural map $q^* \co \K(1) \to \K(G)$ (where $q\co G\to 1$ is the projection) then gives $\K(G)$ the structure of a $\ku$--module (in fact, a $\ku$--algebra).

\begin{lemma}\label{Bott} Let $G$ be finitely generated.
Then the Bott map 
\begin{equation} \label{beta} \beta_* \co  \K_{m-3} (G) \to \K_{m-1} (G)\end{equation}
is injective
if and only if
every $S^m$--family of characters of $G$ is stably geometric. Furthermore, every  $S^m$--family of characters of $G$ is stably nullhomotopic if and only if $\Rdef_m (G) = 0$, which holds if and only if (\ref{beta}) is injective \e{and} $\beta_* \co \K_{m-2} (G) \to \K_{m} (G)$ is surjective.
\end{lemma}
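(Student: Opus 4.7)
The plan is to dispatch the stably-nullhomotopic equivalence directly from Corollary~\ref{Rdef-cor}, and then extract the Bott-map conditions from the long exact sequence of a cofiber sequence relating $\K(G)$ and $\Rdef(G)$.

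For the first equivalence: by Corollary~\ref{Rdef-cor} we have $\Rdef_m(G)\isom [S^m,\ol{\Rep}(G)]/\pi_0\ol{\Rep}(G)$, and the class of an $S^m$--family $\chi$ vanishes in this quotient exactly when there exist constant characters $c,d$ with $\chi\oplus c\heq d$. Since the character monoid is abelian, adding an inverse of $d$ converts this to $\chi\oplus\chi_0\heq \ast$ for some character $\chi_0$, which is precisely stable nullhomotopy; hence $\Rdef_m(G)=0$ iff every $S^m$--family of characters is stably nullhomotopic.

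For the statements involving $\beta_*$, I would work with the cofiber sequence of $\bS$--modules
$$\Sigma^2\K(G)\srm{\beta}\K(G)\srm{p_*}\Rdef(G),$$
where $p_*$ is induced by the natural projection $p\co\Rep(G)_{\hU}\to\ol{\Rep}(G)$ from the Borel construction onto the orbit space. This cofiber sequence arises by smashing (over $\ku=\K(1)$) the classical Bott cofiber sequence $\Sigma^2\ku\srm{\beta}\ku\to H\Z$ with the $\ku$--algebra $\K(G)$, together with T.~Lawson's identification~\cite{Lawson-simul} of $H\Z\sm_\ku\K(G)$ with $\Rdef(G)$. The associated long exact sequence
$$\K_{m-2}(G)\srm{\beta_*}\K_m(G)\srm{p_*}\Rdef_m(G)\srm{\partial}\K_{m-3}(G)\srm{\beta_*}\K_{m-1}(G)$$
yields directly that $\Rdef_m(G)=0$ iff $\beta_*\co\K_{m-2}\to\K_m$ is surjective and $\beta_*\co\K_{m-3}\to\K_{m-1}$ is injective. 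It also gives: $\beta_*\co\K_{m-3}\to\K_{m-1}$ is injective iff $\partial=0$ iff $p_*\co\K_m(G)\to\Rdef_m(G)$ is surjective.

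To finish, I would identify surjectivity of $p_*$ with universal stable geometricity. The factorization $\pi=p\circ i\co\Rep(G)\injects\Rep(G)_{\hU}\to\ol{\Rep}(G)$ gives $p_*\circ i_*=\pi_*$, and by Proposition~\ref{univ} the image of $\pi_*\co\rh_m\Rep(G)\to\rh_m\ol{\Rep}(G)=\Rdef_m(G)$ consists of those classes $[\chi]$ for which some $\chi\oplus c$ is homotopic to $\pi\circ\rho$ for a spherical family of representations $\rho$, i.e.\ the stably geometric characters. Using the splitting $\K_m(G)\isom\rh_m\Rep(G)\oplus K^{-m}(\ast)$ of Theorem~\ref{Kdef}, the $K^{-m}(\ast)$--summand is induced by the augmentation $G\to\{1\}$, and the composite $\K(1)\to\K(G)\srm{p_*}\Rdef(G)$ factors through $\Rdef(1)\heq\Omega B\bbN$, which vanishes in positive degrees. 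Consequently $\mathrm{image}(p_*)=\mathrm{image}(\pi_*)$, so $p_*$ is surjective iff every $S^m$--family of characters is stably geometric. The main obstacle is establishing the cofiber sequence (equivalently, the identification $H\Z\sm_\ku\K(G)\heq\Rdef(G)$); if a direct citation to~\cite{Lawson-simul} does not suffice, I would instead construct $p_*$ explicitly, verify it annihilates the Bott class (so that it factors through the cofiber), and check the induced map is a weak equivalence by analyzing the fibers of $\Rep(G)_{\hU}\to\ol{\Rep}(G)$, which by Schur's lemma (as in Lemma~\ref{bdry}) are products of classifying spaces of unitary groups.
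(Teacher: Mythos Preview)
Your approach is essentially the paper's: both invoke Corollary~\ref{Rdef-cor}/Proposition~\ref{univ} for the stably-nullhomotopic equivalence, T.~Lawson's cofiber sequence $\Sigma^2\K(G)\to\K(G)\to\Rdef(G)$ for the long exact sequence, and the splitting of Theorem~\ref{Kdef} to reduce surjectivity of $p_*$ to surjectivity of $\pi_*\co\rh_m\Rep(G)\to\rh_m\ol{\Rep}(G)$. One small correction: in your first paragraph there is no inverse of $d$ in the character monoid (it is $\bbN$--graded), but none is needed, since $\chi\oplus c\heq d$ with $d$ constant already says $\chi\oplus c$ is nullhomotopic, so take $\chi_0=c$. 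Your reason that the $K^{-m}(\ast)$ summand dies under $p_*$ (factoring through $\Rdef(1)\heq\Omega B\bbN$) is a slight variant of the paper's (which instead notes this summand lies in the image of $\beta_*$, hence in the kernel of $p_*$); both are fine.
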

\begin{proof} The statements are trivially true for $m=0$, so we assume $m>0$.

Corollary~\ref{Rdef-cor} and Proposition~\ref{univ} show that vanishing of $\Rdef_m (G)$ is equivalent to the statement that every family $S^m \to \Hom(G, U(n))/U(n)$ is stably nullhomotopic. To relate vanishing of $\Rdef_m (G)$ to the Bott map, we use T. Lawson's homotopy cofiber sequence~\cite{Lawson-simul}
$$\Sigma^2 \K(G)\srm{\beta} \K(G) \maps \Rdef (G),$$
where the second map is induced by the quotient map  
$\Rep(G)_{hU} \to \ol{\Rep} (G)$. The associated long exact sequence in homotopy has the form
\begin{align*} \cdots \to \pi_{m} \Sigma^2 \K(G) = \K_{m-2} (G) \xmaps{\beta_*} \K_m (G) \to \Rdef_m (G) \hspace{.5in}\\
\hspace{.5in} \to \pi_{m-1} \Sigma^2 \K(G) = \K_{m-3} (G) \xmaps{\beta_*} \K_{m-1} (G)\to \cdots.
\end{align*}
This yields the desired statement regarding vanishing of $\Rdef_m (G)$, and shows that 
shows that injectivity of $\beta_* \co  \K_{m-3} (G) \to  \K_ {m-1}(G)$ is equivalent to surjectivity 
of $\K_m (G) \to   \Rdef_m (G)$. It remains to show that surjectivity of this map is equivalent to the first statement in the lemma. 

Theorem~\ref{Kdef} provides a splitting  
$$\K_m (G) \isom \rh_m (\Rep (G)) \oplus \pi_m \ku,$$
 in which the second summand 
is the image of the natural map $\ku \to \K (G)$ (on homotopy). Since the Bott map on $\pi_* \ku$ is an isomorphism, this summand is in the image of the Bott map on $ \K_*(G)$, and hence maps to zero under the projection $\K_* (G)\to \Rdef_* (G)$. 
Together with the naturality statement in Theorem~\ref{gen-thm}, this implies
that surjectivity of 
$$\K_m (G) \to  \Rdef_m (G)$$ 
is equivalent to  surjectivity of
$$\rh_m (\Rep (G)) \maps \rh_m (\ol{\Rep} (G)),$$
and the desired result follows from Proposition~\ref{univ}.
\end{proof}

\begin{proposition}\label{d<2} If $d\leqs 2$ and $G$ is finitely generated, then every spherical family of characters $S^d \to \Hom(G, U(n))/U(n)$ is stably geometric.
\end{proposition}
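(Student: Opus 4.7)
The plan is to invoke Lemma~\ref{Bott}, which translates stable geometricity of an $S^d$--family of characters into injectivity of the Bott map
\[ \beta_* \co \K_{d-3}(G) \maps \K_{d-1}(G). \]
For $d \leqs 2$ the source sits in degree $d-3 \leqs -1$, so the goal is to see that this source group vanishes, which makes $\beta_*$ trivially injective.

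To that end, I would appeal to the fact that the deformation $K$--theory spectrum $\K(G)$ is \emph{connective}. In the $\bS$--module model described at the start of this section (following \cite[Section 6]{Ramras-stable-moduli}), the spectrum $\K(G)$ is built so that its homotopy groups in non-negative degrees reproduce $\K_m (G) = \pi_m (\Omega B \Rep(G)_{\hU})$; because the underlying construction is the group completion of a topological monoid, there is no way to produce classes in negative degrees, and $\pi_m \K(G) = 0$ for $m < 0$. Thus $\K_{d-3}(G) = 0$ for every $d \leqs 2$, and Lemma~\ref{Bott} then gives the conclusion.

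As a sanity check I would also note that the case $d=0$ is transparent without any spectral input: a map $S^0 \to \Hom(G, U(n))/U(n)$ is just a pair of individual characters, each of which lifts through the surjection $\pi \co \Hom(G, U(n)) \to \Hom(G, U(n))/U(n)$, so such a family is literally geometric.  The cases $d = 1$ and $d = 2$ are where Lemma~\ref{Bott} genuinely does the work, via the degree shift inherent in the Bott map.

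There is no serious obstacle here; the entire proposition is an instance of the general principle that questions about $S^d$--families of characters for small $d$ reduce to statements about $\K_j$ in degrees $j < 0$, which are automatically zero by connectivity. The only thing to be careful about is invoking the correct connectivity statement for whichever $\bS$--module model of $\K(G)$ is in use, but this is built into the construction cited.
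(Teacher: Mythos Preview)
Your proposal is correct and matches the paper's proof essentially verbatim: the paper also invokes Lemma~\ref{Bott} and observes that the domain $\K_{d-3}(G)$ of the Bott map vanishes for $d\leqs 2$. Your additional remarks on connectivity and the $d=0$ sanity check are fine elaborations but not needed for the argument.
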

\begin{proof} This follows immediately from Lemma~\ref{Bott}, because the domain of the Bott map $\K_{d-3} G\to \K_{d-1} G$ is zero.
\end{proof}

In the examples known thus far, two interesting phenomena appear. First, \e{all} spherical families of characters are stably geometric. Second, once the dimension of the spherical family is  sufficiently high, all spherical families of characters are stably nullhomotopic. The dimension bound, interestingly, appears to be  related to the \e{rational cohomological dimension} of $G$, namely the smallest dimension in which 
$H^* (BG; \bbQ)$ is non-zero. We denote this number by $\qcd(G)$.

We now explain several examples of these phenomena. It will also be helpful to understand the structure of $\K(G)$ as a $\ku$--module in these examples.

\

\nd{\bf Finite groups.} If $G$ is finite, then $\Hom(G, U(n))/U(n)$ is a discrete space (it is a subspace of $U(n)^G$, hence compact and Hausdorff, and it is finite since $G$ has finitely many irreducible representations). Hence every family of characters parametrized by a connected space is constant, and in particular both stably nullhomotopic and stably geometric. Note that in this case, $H^* (BG; \bbQ) = 0$. 

As a $\ku$--module, $\K (G)$ is simply a wedge of copies of $\ku$, one for each irreducible representation of $G$. This was first proven by T. Lawson~\cite[Chapter 6.3]{Lawson-thesis}; see also~\cite[Section 5]{Ramras-excision}.
 
\

\nd{\bf  The Heisenberg group.} Let $H$ denote the (integral) Heisenberg group, consisting of $3\cross 3$ upper-triangular integer matrices with 1's on the diagonal. T. Lawson calculated both $\Rdef_* (H)$ and $\K_* (H)$~\cite{Lawson-thesis, Lawson-prod}. In particular, $\Rdef_* (H)$ vanishes for $*>2$.   Proposition~\ref{d<2} then implies that   all spherical families of characters of $H$ are stably geometric, and Corollary~\ref{Rdef-cor} implies that 
those of dimension greater than 2 are stably nullhomotopic. Note that $H$ has rational cohomological dimension 3, because $\bbR^3/H$ is a closed, orientable 3--manifold that models $BH$ (here we view 
$\bbR^3$ as the Lie group of $3\cross 3$ upper-triangular \e{real} matrices with 1's on the diagonal).

We in fact have the following description of $\K(H)$ as a $\ku$--module:
$$\K(H) \heq   \left(\bigvee_{1}^\infty \ku \right) \vee \left(\bigvee_{1}^\infty \Sigma \ku \right)$$ 
This weak equivalence is obtained by choosing bases $\{f_i\}_i$ and $\{g_i\}_i$ for   $\K_0 (H)\isom \bigoplus_{1}^\infty \bbZ$ and $\K_1 (H)\isom \bigoplus_{1}^\infty \bbZ$  (respectively). These classes can be viewed as maps of spectra
$f_i \co \bS^0 \to \K(H)$ and $g_i \co \bS^1\to \K(H)$. We now obtain maps $\wt{f_i} \co \ku \to \K(H)$ as the composites
$$\bS^0 \sm  \ku \xmaps{f_i \sm \textrm{id}} \K(H) \sm \ku \srm{\mu}  \K(H),$$
where $\mu$ is the structure map for the $\ku$--module $\K(H)$, and similarly 
we obtain maps $\wt{g_i} \co \Susp \ku \to \K(H)$ as the composites
$$\bS^1 \sm  \ku \xmaps{g_i \sm \textrm{id}} \K(H) \sm \ku \srm{\mu}  \K(H).$$
The map $(\bigvee_i f_i) \vee (\bigvee_i g_i)$ is now a map of $\ku$--modules and an isomorphism on homotopy, as desired. (Note that these infinite wedges should be interpreted as colimits of finite wedges.)

\

\nd {\bf  Crystallographic groups.}
Recall that a ($d$--dimensional) crystallographic group is a discrete, cocompact subgroup of the group of isometries of $\bbR^d$ (with its standard Euclidean metric).
In~\cite{Ramras-crystal}, it was shown that $\Rdef_* (G)$ vanishes in degrees $*>d$. 
Combining this with Corollary~\ref{Rdef-cor} leads to the following result.

\begin{proposition} \label{crystal} If $G$ is a $d$--dimensional crystallographic group, then every $S^k$--family in $\Hom(G, U(n))/U(n)$, with $k>d$, is stably nullhomotopic.
\end{proposition}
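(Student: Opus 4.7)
The plan is to combine the vanishing result for $\Rdef_{*}(G)$ in high degrees established in~\cite{Ramras-crystal} with the concrete interpretation of $\Rdef_{*}(G)$ provided by Corollary~\ref{Rdef-cor} and Proposition~\ref{univ}. Specifically, for a $d$-dimensional crystallographic group $G$, the cited result of~\cite{Ramras-crystal} gives $\Rdef_{k}(G) = 0$ whenever $k > d$, so the statement reduces to a translation of this vanishing into a statement about spherical families of characters.

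First I would invoke Corollary~\ref{Rdef-cor} to identify
\[
\Rdef_{k}(G) \cong [S^{k}, \ol{\Rep}(G)]/\pi_{0}\ol{\Rep}(G),
\]
and then apply Proposition~\ref{univ} (which is justified here because every component of $\ol{\Rep}(G)$ is anchored, indeed strongly anchored, since $\ol{\Rep}(G)$ is abelian). Under this identification, a class $[\chi] \in \Rdef_{k}(G)$ represented by a family $\chi\colon S^{k}\to \Hom(G,U(n))/U(n)$ vanishes precisely when there exist constant maps $c,d\colon S^{k}\to \ol{\Rep}(G)$ such that $\chi \oplus c \simeq d$; since $d$ is itself nullhomotopic, this is exactly the statement that $\chi$ is stably nullhomotopic. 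In fact the equivalence ``$\Rdef_{m}(G)=0 \iff$ every $S^{m}$-family of characters is stably nullhomotopic'' has already been recorded in Lemma~\ref{Bott}, so I would simply cite it.

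Putting these pieces together, for $k > d$ we have $\Rdef_{k}(G) = 0$ by~\cite{Ramras-crystal}, and hence by Lemma~\ref{Bott} every map $S^{k}\to \Hom(G,U(n))/U(n)$ is stably nullhomotopic, which is the conclusion. There is essentially no obstacle here beyond correctly assembling the references: the hard work was done in~\cite{Ramras-crystal} (to prove the vanishing of $\Rdef_{*}(G)$ above the rational cohomological dimension of a crystallographic group) and in the general machinery of Theorem~\ref{gen-thm} (which turns the abstract vanishing into the geometric statement about spherical families).
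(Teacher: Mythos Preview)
Your proposal is correct and follows exactly the approach the paper takes: the paper simply says that the vanishing of $\Rdef_*(G)$ for $*>d$ from~\cite{Ramras-crystal}, combined with Corollary~\ref{Rdef-cor}, yields the result. Your write-up just makes explicit the translation step (via Proposition~\ref{univ} or equivalently Lemma~\ref{Bott}) that the paper leaves implicit.
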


For the 17 isomorphism classes of 2-dimensional crystallographic groups, it follows that the Bott map is injective in all dimensions. 

\begin{proposition} If $G$ is a 2-dimensional crystallographic group, then all spherical families of characters of $G$ are stably geometric.
\end{proposition}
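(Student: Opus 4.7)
The plan is to reduce the statement to an injectivity property of the Bott map via Lemma~\ref{Bott}, and then obtain that injectivity from the vanishing of $\Rdef_*(G)$ in high degrees guaranteed by Proposition~\ref{crystal}. More precisely, by Lemma~\ref{Bott}, every $S^m$-family of characters of $G$ is stably geometric if and only if the Bott map $\beta_* \co \K_{m-3}(G) \to \K_{m-1}(G)$ is injective, so it suffices to establish this injectivity for every $m \geq 1$.

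The low-dimensional range $m \leq 2$ is immediate from Proposition~\ref{d<2}, which applies to any finitely generated group and already gives stable geometricity (hence, via Lemma~\ref{Bott}, injectivity of the relevant Bott maps). This is the first step I would record.

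For the range $m \geq 3$, the plan is to invoke Proposition~\ref{crystal} with $d = 2$: every $S^m$-family in $\Hom(G, U(n))/U(n)$ with $m > 2$ is stably nullhomotopic. Combining Corollary~\ref{Rdef-cor} with Proposition~\ref{univ} translates this as $\Rdef_m(G) = 0$ for all $m \geq 3$. The second half of Lemma~\ref{Bott} then yields injectivity of $\beta_* \co \K_{m-3}(G) \to \K_{m-1}(G)$ (and, incidentally, surjectivity of the neighboring Bott map, which is not needed here).

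No step is expected to present a genuine obstacle; the proof is essentially a bookkeeping argument that dovetails two previous results and correctly accounts for the index shift. The one subtle point that warrants attention is that Proposition~\ref{crystal} only provides information for $m > d = 2$, so the cases $m = 1, 2$ cannot be handled through the $\Rdef$-vanishing route and must be covered separately by Proposition~\ref{d<2}; once that gap is filled, Lemma~\ref{Bott} assembles the conclusion that every spherical family of characters of $G$ is stably geometric.
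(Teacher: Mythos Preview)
Your proposal is correct and follows essentially the same route as the paper: the paper observes that, for a 2--dimensional crystallographic group, Proposition~\ref{crystal} (via the vanishing of $\Rdef_m(G)$ for $m>2$) forces the Bott map to be injective in all dimensions, and then Lemma~\ref{Bott} yields stable geometricity, with the low range $m\leqs 2$ handled by Proposition~\ref{d<2}. One minor simplification you could make: once Proposition~\ref{crystal} gives that $S^m$--families with $m>2$ are stably nullhomotopic, stable geometricity for those $m$ follows immediately (a nullhomotopic family is homotopic to a constant, which lifts), so the detour through the Bott map is not strictly needed in that range.
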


T. Lawson calculated the deformation $K$--theory of the 2-dimensional crystallographic group 
$G = \bbZ^2 \rtimes \bbZ/4\bbZ$, where $\bbZ/4\bbZ$ acts by an order 4 signed permutation matrix, concluding that $\K (G) \heq (\bigvee_8 \ku) \vee \Susp^2 \ku$.

For many crystallographic groups, $\Rep(G)$ is stably group-like with respect to the trivial representation (see~\cite[Section 10]{Ramras-crystal}), and hence one only needs to stabilize with respect to block sum by identity matrices.

\

\nd{\bf  Surface groups.} Let $\Sigma$ be a \e{product} of compact aspherical surfaces, possibly with boundary. The deformation $K$--theory of $\pi_1 \Sigma$ was calculated in~\cite{Ramras-stable-moduli}. In particular, the Bott map on $\K_* (\pi_1 \Sigma)$ is injective in \e{all} dimensions, and $\Rdef_* (\pi_1 \Sigma)$ vanishes above the \e{rational cohomological dimension} of $\Sigma$~\cite[Proposition 6.11]{Ramras-stable-moduli}. Thus all spherical families of characters are stably geometric, and those of dimension greater than $\qcd (\Sigma)$ are stably nullhomotopic. Note that $\Rep (\pi_1 \Sigma)$ is stably group-like with respect to the trivial representation~\cite[Lemma 6.4]{Ramras-stable-moduli}, so  one only needs to stabilize with respect to block sum by identity matrices.

\begin{proposition}\label{prop:surf} With $\Sigma$ as above, for each spherical family of characters of $\rho \co S^k \to \Hom(\pi_1 \Sigma, U(n))/U(n)$, there exists $N\in \N$ such that $\rho\oplus I_N$ is geometric.
\end{proposition}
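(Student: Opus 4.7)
The strategy is to combine two facts already invoked in the preceding paragraph: (i) the Bott map $\beta_* \co \K_*(\pi_1\Sigma) \to \K_{*+2}(\pi_1\Sigma)$ is injective in all degrees (\cite[Proposition 6.11]{Ramras-stable-moduli}), and (ii) $\Rep(\pi_1\Sigma)$ is stably group-like with respect to the trivial representation (\cite[Lemma 6.4]{Ramras-stable-moduli}).

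First I would apply Lemma~\ref{Bott} with $m=k$ to convert (i) into the assertion that the given family $\rho\co S^k \to \Hom(\pi_1\Sigma, U(n))/U(n)$ is stably geometric. Unpacking this definition yields some $l\geqs 0$, a character $\chi_0 \in \Hom(\pi_1\Sigma, U(l))/U(l)$, and a family of representations $\tilde\rho\co S^k \to \Hom(\pi_1\Sigma, U(n+l))$ such that
$$\pi \circ \tilde\rho \,\heq\, \rho \oplus c_{\chi_0},$$
where $\pi$ denotes the quotient by the conjugation action and $c_{\chi_0}$ is the constant family with value $\chi_0$.

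The remaining task is to replace the auxiliary character $\chi_0$ by the character of a trivial representation, which is where (ii) comes in. Since the quotient map $\Hom(\pi_1\Sigma, U(l)) \to \Hom(\pi_1\Sigma, U(l))/U(l)$ is surjective, choose any lift $\sigma_0$ of $\chi_0$. Property (ii) then produces a representation $\sigma_0' \in \Hom(\pi_1\Sigma, U(l'))$ such that $\sigma_0 \oplus \sigma_0'$ lies in the same path component of $\Hom(\pi_1\Sigma, U(N))$ as the trivial representation $I_N$, where $N = l+l'$. Applying $\pi$ and summing $c_{\pi(\sigma_0')}$ to both sides of the previous homotopy yields
$$\rho \oplus I_N \,\heq\, \rho \oplus c_{\chi_0} \oplus c_{\pi(\sigma_0')} \,\heq\, \pi\circ(\tilde\rho \oplus c_{\sigma_0'}),$$
which exhibits $\rho \oplus I_N$ as geometric.

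No serious obstacle is anticipated: the argument is essentially bookkeeping once Lemma~\ref{Bott} is available. The only mild subtlety is to remember that the stably-group-like property lives in $\Rep(\pi_1\Sigma)$, so it must be applied to a chosen lift $\sigma_0$ rather than to $\chi_0$ itself, and that paths in $\Hom$ descend continuously through $\pi$ to paths in $\Hom/U$.
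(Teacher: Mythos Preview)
Your proposal is correct and follows exactly the approach the paper sketches in the paragraph preceding the proposition: injectivity of the Bott map gives stable geometricity via Lemma~\ref{Bott}, and the stably-group-like property of $\Rep(\pi_1\Sigma)$ with respect to the trivial representation lets one replace the auxiliary character $\chi_0$ by a trivial one. Your careful handling of the lift $\sigma_0$ (applying the stably-group-like condition in $\Rep$ rather than in $\ol{\Rep}$, then pushing down through $\pi$) is precisely the bookkeeping the paper leaves implicit.
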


We note that this class of groups includes free groups (since $\Sigma$ is allowed to have boundary) and free abelian groups. In these cases
 there are deformation retractions
$$\Hom(\pi_1 \Sigma, \GL(n))/ \GL(n) \srm{\heq} \Hom(\pi_1 \Sigma, U(n))/U(n).$$
Indeed, it is shown in Florentino--Lawton--Ramras~\cite[Proposition 3.4]{FLR} that  $\Hom(\pi_1 \Sigma, \GL(n))/ \GL(n)$ (which is usually non-Hausdorff) deformation retracts to the geometric invariant theory quotient $\Hom(\pi_1 \Sigma, \GL(n))/\!\!/ \GL(n)$, which in turn deformation retracts to the subspace of unitary characters by Florentino--Lawton~\cite{FL-free, FL-abelian} (see also Bergeron~\cite{Bergeron}).
Hence in these cases, Proposition~\ref{prop:surf} applies to spherical families of general linear representations as well.
 
 \

These examples raise the following questions.

\begin{question} Does there exist a finitely generated discrete group $G$ and a spherical family of unitary characters that is \e{not} stably geometric?
\end{question}

\begin{question}\label{null-q} If $G$ is a finitely generated discrete group, and $d>\qcd (G)$, are all $S^d$--families of  unitary characters are stably nullhomotopic?
\end{question}

It would also be interesting to answer these questions for other classical  sequences of Lie groups (e.g. the general linear groups) in place of $U(n)$.

\s{.25}

We close by noting that these phenomena interact well with both free and direct products of groups. 

For free products, the main result of~\cite{Ramras-excision} provides short exact sequences
$$0 \maps \pi_m \ku \maps \K_m (G_1) \oplus \K_m (G_2) \maps \K_m (G_1*G_2)\maps 0$$
for each $m\geqs 0$, and the maps in this sequence commute with the Bott maps. By the 4-lemma, injectivity of the Bott maps for $G_1$ and $G_2$ implies injectivity of the Bott map for $G_1*G_2$. 
In all cases in which injectivity of the Bott map is known  (finite groups, products of surface groups, the Heisenberg group, $G = \bbZ^2 \rtimes \bbZ/4\bbZ$) we in fact have a more detailed description of $\K(G)$ as a $\ku$--module: it is \e{elementary}, in the following sense.

\begin{definition} We say that a $\ku$--module $M$ is \e{elementary} if it is a countable wedge sum of $\ku$--modules of the form $\Susp^i \ku$ and $\Susp^i (\ku/n)$. The maximum value of $i$ appearing in this decomposition will be called the \e{suspension degree} of $M$.
\end{definition}

The spectrum $\ku/n$ is the ``cofiber of multiplication by $n$''; its homotopy is given by $\pi_* (\ku/n) = \bbZ/n\bbZ$ for $*$ even, and $\pi_* (\ku/n) = 0$ for $*$ odd, and its Bott map is an isomorphism in all (positive) dimensions. The spectrum $\ku/2$ appears as a summand in the deformation $K$--theory of non-orientable surface groups; see~\cite{Ramras-stable-moduli}. 

We have the following basic observations about elementary modules.

\begin{lemma}\label{elem-lemma} Assume $\K(G)$ is elementary. Then its Bott map is injective in all dimensions, and $\Rdef_m (G)$ is isomorphic to the cokernel of 
$$\beta_* \co \K_{m-2} (G) \to \K_m (G).$$ 
Consequently, all spherical families of characters of $G$ are stably geometric, and the maximum dimension in which $\Rdef_* (G)$ is non-zero is equal to the suspension degree of $\K(G)$.
\end{lemma}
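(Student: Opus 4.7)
The plan is to reduce everything to the behavior of the Bott map on the two elementary pieces $\Susp^i \ku$ and $\Susp^i (\ku/n)$, and then invoke T.~Lawson's cofiber sequence together with Lemma~\ref{Bott} to transfer the resulting information to $\Rdef(G)$ and to families of characters.

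First I would analyze the Bott map summand-by-summand. Since wedges of spectra induce direct sums on homotopy groups, and since Bott is a natural $\ku$--module map, the Bott map on $\K(G)$ decomposes as the direct sum of the Bott maps on the individual summands. On $\ku$ the Bott map induces multiplication by the Bott element on $\pi_*\ku$, which is injective in every degree (trivially so in negative or odd degrees, and an isomorphism $\bbZ\to\bbZ$ in positive even degrees) and fails to be surjective only at $m=0$. The same argument, using the cofiber sequence $\ku\xmaps{n}\ku\to\ku/n$ and the five--lemma, shows that the Bott map on $\ku/n$ is injective in every degree and fails to be surjective only at $m=0$. Shifting by $\Susp^i$ translates this to $\Susp^i\ku$ and $\Susp^i(\ku/n)$: the Bott map is injective in all dimensions, and its cokernel on homotopy is concentrated in the single dimension $m=i$, where it equals $\bbZ$ or $\bbZ/n$ respectively. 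Summing over the summands of $\K(G)$ yields the first assertion of the lemma.

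Next I would feed this into T.~Lawson's cofiber sequence $\Susp^2 \K(G) \xmaps{\beta} \K(G) \to \Rdef(G)$, which (as used in the proof of Lemma~\ref{Bott}) gives the long exact sequence
\begin{equation*}
\cdots \to \K_{m-2}(G) \xmaps{\beta_*} \K_m(G) \to \Rdef_m(G) \to \K_{m-3}(G) \xmaps{\beta_*} \K_{m-1}(G)\to\cdots.
\end{equation*}
Injectivity of $\beta_*$ in every dimension forces the connecting homomorphism $\Rdef_m(G)\to \K_{m-3}(G)$ to vanish, yielding the desired identification
$\Rdef_m(G)\isom \coker(\beta_*\co \K_{m-2}(G)\to \K_m(G))$. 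Then Lemma~\ref{Bott} (together with injectivity of $\beta_*$) immediately implies that every spherical family of characters of $G$ is stably geometric, settling the third assertion.

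Finally, to locate the top non--vanishing dimension of $\Rdef_*(G)$, I would combine the cokernel description with the summand analysis of the first paragraph. Each summand $\Susp^i\ku$ or $\Susp^i(\ku/n)$ of $\K(G)$ contributes to $\coker(\beta_*)$ only in the single dimension $m=i$, so $\Rdef_m(G)\neq 0$ if and only if $\K(G)$ contains a summand of the form $\Susp^m \ku$ or $\Susp^m(\ku/n)$. Taking the supremum over $m$ identifies the largest such dimension with the suspension degree of $\K(G)$. The only subtlety worth double--checking is the compatibility of the direct--sum decomposition of $\pi_*\K(G)$ with the Bott map when $\K(G)$ is an infinite wedge; this is handled by interpreting the infinite wedge as the colimit of its finite sub--wedges, over which $\beta_*$ and homotopy commute.
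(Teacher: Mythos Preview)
The paper does not actually give a proof of this lemma; it is stated as one of the ``basic observations about elementary modules'' and left to the reader. Your argument correctly supplies the expected details: analyzing the Bott map summand-by-summand on $\Susp^i\ku$ and $\Susp^i(\ku/n)$, feeding injectivity into Lawson's cofiber sequence to identify $\Rdef_m(G)$ with $\coker(\beta_*)$, and then reading off the suspension degree from the summand analysis. This is exactly the routine verification the paper is inviting the reader to make, so your proposal is both correct and aligned with what the author had in mind.
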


\begin{proposition}\label{geom} If $G$ is built via free and direct products from groups whose deformation $K$--theory is elementary, then $\K(G)$ is elementary as well, and hence all spherical families of characters of $G$ are stably geometric. 

Furthermore, the suspension degree of  $\K(G_1 * G_2)$ is the maximum of the suspension degrees of $\K(G_1)$  and  $\K(G_2)$.
\end{proposition}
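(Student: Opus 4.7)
The plan is to prove that $\K(G)$ is elementary by induction on the construction of $G$. The base case holds by hypothesis, and the conclusion that all spherical families of characters are stably geometric then follows from Lemma~\ref{elem-lemma}. Thus it suffices to show that the class of groups with elementary deformation $K$-theory is closed under free and direct products, and to establish the suspension degree formula in the free product case.

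For the free product inductive step, I would start from the short exact sequence
$$0 \to \pi_m \ku \to \K_m(G_1) \oplus \K_m(G_2) \to \K_m(G_1 * G_2) \to 0$$
recalled just before the statement, which comes from a cofiber sequence of $\ku$-modules
$$\ku \to \K(G_1) \vee \K(G_2) \to \K(G_1 * G_2).$$
The first map is induced by the projections $q_i \co G_i \to 1$, and the inclusions $\iota_i \co 1 \injects G_i$ together with the natural splitting $\K(G_i) \heq \ku \vee C_i$ provided by Theorem~\ref{Kdef} furnish a $\ku$-module retraction of this map. The cofiber sequence therefore splits as $\ku$-modules, yielding $\K(G_1) \vee \K(G_2) \heq \ku \vee \K(G_1 * G_2)$. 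Cancelling one copy of $\ku$ gives $\K(G_1 * G_2) \heq \ku \vee C_1 \vee C_2$; since each $C_i$ is a wedge summand of the elementary module $\K(G_i)$, it is itself elementary, and the suspension degree of the resulting wedge equals $\max(d_1, d_2)$ where $d_i$ is the suspension degree of $\K(G_i)$.

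For the direct product inductive step, I would invoke T.~Lawson's product formula from~\cite{Lawson-prod}, which yields a $\ku$-algebra equivalence $\K(G_1 \times G_2) \heq \K(G_1) \sm_{\ku} \K(G_2)$. Distributing the smash product over the assumed wedge decompositions of each $\K(G_i)$ reduces the question to showing that each piece $\Sigma^a \ku/n \sm_{\ku} \Sigma^b \ku/m$ is elementary. A cofiber-sequence computation starting from $\ku/m \xmaps{n} \ku/m \to \ku/n \sm_{\ku} \ku/m$ then identifies $\ku/n \sm_{\ku} \ku/m \heq \ku/d \vee \Sigma \ku/d$, where $d = \gcd(n, m)$ and we adopt the convention $\ku/0 := \ku$, completing the induction.

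The main obstacle will be the direct product step. On the one hand, one must verify that Lawson's product formula can be applied in the $\ku$-module form stated above; on the other hand, one must promote the straightforward additive computation of $\pi_{*}(\ku/n \sm_{\ku} \ku/m)$ to an actual wedge splitting in the $\ku$-module category, which requires a short Bockstein-style argument rather than a mere homotopy group computation. The free product step, by contrast, is essentially formal once one observes that the cofiber sequence splits as $\ku$-modules, courtesy of the splittings from Theorem~\ref{Kdef}.
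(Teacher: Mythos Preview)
Your approach is essentially the paper's: it phrases the free product step via the homotopy pushout $\K(G_1) \leftarrow \ku \to \K(G_2)$ (equivalent to your cofiber sequence, split the same way) and for the direct product step cites \cite[Lemma~6.7]{Ramras-stable-moduli} for the identification $(\ku/n)\sm_\ku(\ku/m) \heq \ku/\gcd(n,m) \vee \Sigma(\ku/\gcd(n,m))$ rather than sketching the Bockstein argument. One small correction: the spectrum-level splitting $\K(G_i) \heq \ku \vee C_i$ follows directly from the retraction induced by $1 \to G_i \to 1$ (functoriality of $\K$), not from Theorem~\ref{Kdef}, which only records the splitting on homotopy groups.
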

\begin{proof}
The main result of~\cite{Lawson-prod} provides a weak-equivalence of $\ku$--modules
\begin{equation}\label{prod-form}\K (G_1 \cross G_2) \heq \K(G_1) \sm_{\ku} \K(G_2),\end{equation}
where the smash product on the right is calculated in the derived category of $\ku$--modules.  
Since the smash product is a left adjoint~\cite[III.3.2]{EKMM}, it commutes with colimits, and in particular it distributes over (countable) wedge sums.
By~\cite[Lemma 6.7]{Ramras-stable-moduli}, there is a weak equivalence of $\ku$--modules
\begin{equation}\label{kun}(\ku/n)\sm_\ku (\ku/m) \heq \ku/\textrm{gcd}(n,m) \vee \Susp (\ku/\textrm{gcd}(n,m)).\end{equation}
It follows that if $\K(G_1)$ and $\K(G_2)$ are elementary, so is $\K(G_1 \cross G_2)$.

For free products, the main result of~\cite{Ramras-excision} shows that 
$\K(G_1*G_2)$ is the homotopy pushout of the diagram
\begin{equation}\label{free}\K(G_1) \longleftarrow \ku \maps \K(G_2),\end{equation}
where the maps are induced by the projections $G_i\to 1$. These maps $\ku\to \K(G_i)$ are split by the map induced by $1 \to G$, and hence they are isomorphisms onto $\ku$--summands in $\K(G_i)$. Writing $\K(G_1) \heq \ku \vee M_1$ and $\K(G_2) \heq \ku \vee M_2$, it follows that the homotopy pushout of (\ref{free}) is weakly equivalent to $\ku \vee M_1 \vee M_2$, which is again elementary and has the claimed suspension degree.
\end{proof}

\begin{corollary} If  Question~\ref{null-q} has a positive answer for $G_1$ and for $G_2$, and $\K(G_1)$ and $\K(G_2)$ are elementary, then the question also has a positive answer for $G_1 *G_2$.
\end{corollary}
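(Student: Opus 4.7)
The plan is to chain together the three lemmas/propositions immediately preceding the corollary. Since both $\K(G_1)$ and $\K(G_2)$ are assumed elementary, Lemma~\ref{elem-lemma} tells us that for each $i$, the maximum dimension in which $\Rdef_*(G_i)$ is non-zero is the suspension degree of $\K(G_i)$. The assumption that Question~\ref{null-q} has a positive answer for $G_i$, combined with Lemma~\ref{Bott}, means $\Rdef_d(G_i) = 0$ whenever $d > \qcd(G_i)$. Consequently, the suspension degree of $\K(G_i)$ is at most $\qcd(G_i)$.

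Next I would apply Proposition~\ref{geom}: $\K(G_1 * G_2)$ is elementary, with suspension degree equal to $\max(s_1, s_2)$ where $s_i$ is the suspension degree of $\K(G_i)$. Combined with the previous step, the suspension degree of $\K(G_1 * G_2)$ is at most $\max(\qcd(G_1), \qcd(G_2))$.

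The remaining input is the identification
\[
\qcd(G_1 * G_2) = \max(\qcd(G_1), \qcd(G_2)),
\]
which follows from the fact that $B(G_1 * G_2) \heq BG_1 \vee BG_2$, so in positive degrees $H^*(B(G_1*G_2);\bbQ) \isom H^*(BG_1;\bbQ) \oplus H^*(BG_2;\bbQ)$, and the top nonvanishing degree of a direct sum is the maximum of the top nonvanishing degrees of the summands (the case where one factor is trivial is vacuous). Combining with the previous paragraph, we conclude that the suspension degree of $\K(G_1*G_2)$ is at most $\qcd(G_1 * G_2)$.

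Finally, applying Lemma~\ref{elem-lemma} once more to the elementary $\ku$--module $\K(G_1 * G_2)$, we obtain $\Rdef_d(G_1 * G_2) = 0$ for every $d > \qcd(G_1 * G_2)$, and Lemma~\ref{Bott} then translates this vanishing into the statement that every $S^d$--family of unitary characters of $G_1 * G_2$ is stably nullhomotopic in those dimensions, which is the positive answer to Question~\ref{null-q} for $G_1 * G_2$. There is no substantive obstacle here: all the heavy lifting is already done in Lemmas~\ref{Bott} and~\ref{elem-lemma} and in Proposition~\ref{geom}; the only new ingredient is the elementary (and well-known) computation of $\qcd$ for a free product via the wedge decomposition of classifying spaces.
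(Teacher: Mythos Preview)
Your proof is correct and follows essentially the same route as the paper's: both use Lemma~\ref{elem-lemma} and the hypothesis to bound the suspension degree of $\K(G_i)$ by $\qcd(G_i)$, invoke Proposition~\ref{geom} for the suspension degree of the free product, and compute $\qcd(G_1*G_2)$ via the wedge decomposition $B(G_1*G_2)\heq BG_1\vee BG_2$ (the paper phrases this last step as Mayer--Vietoris). Your version is simply more explicit about the role of Lemma~\ref{Bott} in translating between Question~\ref{null-q} and vanishing of $\Rdef_*$, which the paper leaves implicit.
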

\begin{proof} By Lemma~\ref{elem-lemma}, we know that the suspension degree of $\K(G_i)$ is at most $\qcd (G_i)$ ($i=1,2$), and we must show that the same holds for $G_1*G_2$.
By Proposition~\ref{geom}, the suspension degree of $\K(G_1 * G_2)$ is the maximum of those for $\K (G_1)$ and $\K (G_2)$.
Similarly,   the Mayer--Vietoris sequence applied to $B(G_1 * G_2) =  BG_1  \vee BG_2$ shows that $\qcd (G_1 * G_2) = \max (\qcd (G_1), \qcd (G_2))$. 
\end{proof}

For direct products, the situation is somewhat subtler, because the additional suspension appearing in (\ref{kun}) can, in principle, prevent the suspension degree from being additive under direct products. However, the suspension degree \e{is} additive for elementary modules of the form 
$$\left( \bigvee_j \Susp^{i_j}  \ku \right) \vee \left(\bigvee_k \Susp^{l_k} (\ku/n_k)  \right)$$ 
that further satisfy $\max\{i_j\}_j > \max\{l_k\}_k$, and 
in all the cases considered above in which $\K(G)$ is known to be elementary, this holds: terms of the form $\Sigma^l (\ku/n)$ only appears for  fundamental groups of aspherical, non-orientable surfaces, whose deformation $K$--theory has the form 
$$\ku \vee \left(\bigvee_i \Susp \ku \right) \vee \ku/2$$ 
with $i>1$. Thus if $G$ is a direct product of the groups above for which we know that $\K(G)$ is elementary, then we have a positive answer to Question~\ref{null-q} for $G$.

 \appendix

\section{Abelian monoids}\label{LH-sec}

In this Appendix we consider the case of abelian monoids $A$, in which every element is automatically strongly anchored. 
In this case we can give a different proof of Theorem~\ref{gen-thm} (under a mild point-set assumption), using Quillen's approach to group completion~\cite[Appendix Q]{Friedlander-Mazur}. The argument presented here is similar to the arguments of Friedlander--Mazur~\cite[Section 2]{Friedlander-Mazur} (see also Friedlander--Lawson~\cite[Section 1]{FL}). 

In this section, $BA$ will denote the \e{thin} realization $|N.\underline{A}|$ of the bar construction on $A$, and we will 
assume that the inclusion of the identity element into $A$ is a closed cofibration. As discussed in the Introduction, 
this condition, which we call \e{properness}, implies that the natural map $||N.\underline{A}||\to |N.\underline{A}|$ is a homotopy equivalence.

Let $S$ be a set, and let $F(S)$ be the free abelian monoid generated by $S$. 
We may consider $F(S)$ as a category whose object set is the underlying set of $F(S)$, and whose morphisms $x\to y$ are those $z\in F(S)$ satisfying $x+z=y$. This is a filtered category, since given two objects $x,y\in F$, there exist morphisms $x\to x+y$ and $y\to y+x = x+y$ (and any two parallel arrows are equal).
In what follows, we will use the fact that
homotopy groups commute  with filtered colimits of simplicial sets~\cite[Proposition A.2.5.3]{DGM}.

Let $A$ be a topological abelian monoid. Following \cite{Friedlander-Mazur}, we define a \e{base system} for $A$ to be a set $\mathcal{B}$ of representatives for $\pi_0 A$ (in fact, Friedlander and Mazur allow more general base systems).
Given an arrow $a\to a'$ in $F(\mathcal{B})$, there exists a unique $b\in F(\mathcal{B})$ such that $a' = a+b$, and we
say that this arrow is \e{labeled by} $b$.

A choice of base system $\mathcal{B}$ defines a functor
 from $F(\mathcal{B})$ to simplicial sets,  sending each object to $S. A$ (the singular set of $A$), and
 each morphism labeled by $b$ to the map  $S. A \to S. A$ induced by multiplication by $b\in A$.
 We will denote the colimit of this functor simply by $\colim_\mathcal{B} S. A$, and similarly for the functor obtained by composing with geometric realization.

\begin{theorem}\label{Ab-thm} Let $A$ be a proper  topological abelian monoid, and let $\mathcal{B}\subset A$ be a base system for $A$. 
Then there are  weak equivalences
$$\colim_\mathcal{B} |S. A| \isom |\colim_\mathcal{B} S. A |\srm{\heq} |\Gr (S. A)| \heq \Omega BA,$$
where $\Gr (S. A)$ denotes the level-wise group completion of the simplicial $($discrete, abelian$)$ monoid $S. A$. 
\end{theorem}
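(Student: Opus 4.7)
The plan is to establish the three arrows of the statement separately. The first isomorphism $\colim_{\mathcal{B}} |S. A| \isom |\colim_{\mathcal{B}} S. A|$ is purely formal: the geometric realization functor on simplicial sets is a left adjoint (to the singular functor), so it commutes with every colimit, and in particular with the filtered colimit over $F(\mathcal{B})$.

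For the middle weak equivalence $|\colim_{\mathcal{B}} S. A| \srm{\heq} |\Gr (S. A)|$, I would construct a natural map of simplicial sets
$$\theta \co \colim_{\mathcal{B}} S. A \maps \Gr(S. A),$$
sending a representative pair $(\sigma, b)$ with $\sigma \in S_n A$ and $b \in F(\mathcal{B})$ (viewed as a constant $n$-simplex) to $[\sigma] - [b] \in \Gr(S_n A)$. This is well-defined because passing to stage $b+b'$ in the colimit replaces $(\sigma,b)$ by $(\sigma+b',b+b')$, which has the same image. To show $|\theta|$ is a weak equivalence, I would compare homotopy groups. Since realization of filtered colimits of simplicial sets commutes with $\pi_k$, the left-hand side computes $\colim_{b \in F(\mathcal{B})} \pi_k(A, b)$, with structure maps induced by translation by elements of $\mathcal{B}$; on $\pi_0$ this recovers $\Gr(\pi_0 A)$ precisely because $F(\mathcal{B}) \to \pi_0 A$ is surjective. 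The right-hand side is the realization of a simplicial abelian group, whose homotopy is computed by the Moore complex, and a direct check identifies it with the same colimit. Crucially the abelian hypothesis ensures translation by $b$ acts as an equivalence on each path component, so these two descriptions match.

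The third equivalence $|\Gr(S. A)| \heq \Omega BA$ is the heart of Quillen's approach to group completion for (simplicial) abelian monoids. Writing $G := |\Gr(S.A)|$, note that $G$ is a topological abelian group because $\Gr(S.A)$ is a simplicial abelian group; hence the canonical map $G \to \Omega BG$ is a weak equivalence by standard delooping of topological abelian groups. The natural map of simplicial abelian monoids $S. A \to \Gr(S. A)$ induces a map $B|S. A| \to BG$, and the fact that this is a weak equivalence is the classical (discrete) abelian group completion theorem applied level-wise combined with a spectral sequence (or homological) argument. Finally, since $|S. A| \heq A$ and $A$ is proper, the bar constructions satisfy $BA \heq B|S. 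A|$ (using the comparison between thin and thick realizations as in the Introduction), yielding the chain $\Omega BA \heq \Omega B|S. A| \heq \Omega BG \heq G = |\Gr(S. A)|$.

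The main obstacle is the middle step: because the level-wise map $\colim_{\mathcal{B}} S_n A \to \Gr(S_n A)$ is \emph{not} a bijection (the domain inverts only constant simplices in $F(\mathcal{B})$, whereas the Grothendieck group inverts all $n$-simplices), one cannot argue simplicial level by simplicial level. The weak equivalence must be extracted after realization by a careful homotopy-group comparison, and it is here that the abelian hypothesis on $A$ becomes essential: it is what makes multiplication by elements of $\mathcal{B}$ well-behaved on higher homotopy and forces the two different kinds of localization to coincide up to weak equivalence.
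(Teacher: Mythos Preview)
Your overall structure mirrors the paper's: the first isomorphism is formal, the third equivalence is assembled from the delooping of topological abelian groups together with the level-wise discrete group completion theorem (this is essentially the paper's zig-zag (\ref{zz}), organized slightly differently), and the middle arrow is correctly singled out as the crux. The map $\theta$ you write down is exactly the paper's map $\phi$.

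The gap is in your verification of the middle equivalence. You propose to compute $\pi_k |\Gr(S.A)|$ via the Moore complex and then assert that ``a direct check identifies it with the same colimit'' $\colim_{b} \pi_k (A, b)$. No such direct check is available: the normalized complex of $\Gr(S.A)$ in degree $n$ consists of formal differences of singular $n$--simplices lying in the common kernel of the positive face maps, and there is no evident way to read off its homology as a filtered colimit of homotopy groups of $A$. (The functor $\Gr$ is a left adjoint, so it does not commute with the kernels defining the Moore complex.) Even if you could produce an abstract isomorphism of groups, you would still owe an argument that the specific map $\theta_*$ realizes it. In effect, what you are calling a direct check is the content of the group completion theorem itself; using the already-established equivalence $|\Gr(S.A)|\heq \Omega BA$ to compute these homotopy groups would be circular, since $\pi_* \Omega BA$ is precisely the unknown.

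The paper's route around this is to invoke Quillen's result (Friedlander--Mazur, Appendix~Q) in its homological form: since each $S_k A$ is abelian and hence good, the map $\phi$ restricts to an integral \emph{homology} isomorphism between the identity components. One then observes that both $|\colim_{\mathcal{B}} S.A|$ and $|\Gr(S.A)|$ are group-like topological abelian monoids (hence all components are simple and are translates of the identity component), and that $\phi$ induces a bijection on $\pi_0$; the paper checks this last point carefully using the zig-zag already constructed for $|\Gr(S.A)|\heq \Omega BA$. A homology isomorphism between simple spaces is a weak equivalence, and the result follows. The missing ingredients in your argument are thus Quillen's homology statement and the simple-space upgrade; without them the homotopy comparison you sketch does not go through.
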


\begin{proof} We begin by noting that since $A$ is abelian, so is $S_k A$ (for each $k$), and hence $S_k A$ and $S. A$ are \e{good} in the sense of~\cite[Appendix Q]{Friedlander-Mazur}.

We first construct the weak equivalence
$$ |\Gr (S. A)| \heq \Omega BA,$$
which will be induced by a natural zig-zag of weak equivalences
\begin{equation} \label{zz}|\Gr (S. A)| \srm{\heq} |\Omega B \Gr (S. A)| \stackrel{\heq}{\longleftarrow} |\Omega B (S. A)| \srm{\heq} \Omega | B S. A| \srm{\heq} \Omega BA.
\end{equation}
We will explain the intermediate simplicial spaces as we go.

For each $k$, $\Gr (S_k A)$ is a discrete abelian group, and hence 
the natural maps 
$$\gamma\co \Gr (S_k A) \maps \Omega B \Gr (S_k A)$$
are weak equivalences. Letting $k$ vary, we obtain the first weak equivalence of simplicial spaces in (\ref{zz}).

Next, since  $S_k A$ is good, the natural map $S_k A \to \Gr (S_k A)$ induces a weak equivalence
$$\Omega BS_k A \srm{\heq} \Omega B \Gr (S_k A)$$
by Quillen~\cite[Appendix Q, Propositions Q.1, Q.2]{Friedlander-Mazur}. 
These maps assemble to give the second weak equivalence in (\ref{zz}).

The third weak equivalence in (\ref{zz}) is the natural map from the level-wise loop space of a simplicial space to the loop space of its realization; since $B S_k A$ is connected for each $k$, this map 
is a weak equivalence by May~\cite[Theorem 12.3]{May-GOILS}.\footnote{We are using the thin geometric realization in this section so that we can apply May's result.}

Finally, the last map in (\ref{zz}) is induced by the weak equivalence 
$$B S. A \srm{\heq} BA$$
obtained by viewing $B S. A$ as the geometric realization of the simplicial space $k \mapsto |(S. A)^k| \isom |S. (A^k)|$, which admits a level-wise weak equivalence to the (proper) simplicial space $k\goesto A^k$ underlying $BA$.

To complete the proof, we will show that 
there is natural weak equivalence of simplicial sets
\begin{equation}\label{a1}\colim_\mathcal{B} S. A \srm{\phi} \Gr (S. A).\end{equation}
 Let $t\co F(\mathcal{B}) \to A$ be the monoid homomorphism induced by the inclusion $\mathcal{B}\injects A$.
Elements in $\colim_\mathcal{B} S_k A$ are represented by pairs $(b, \sigma)$ with $b \in F(\mathcal{B})$, $\sigma\co \Delta^k\to A$, and we set
$$\phi([(b, \sigma)]) = \sigma - c_{t(b)} \in \Gr (S_k A),$$
where $c_{t(b)}$ is the degenerate $k$--simplex with value $t(b) \in A$. 
 Since  $S. A$ is good,  the main result of Quillen~\cite[Appendix Q]{Friedlander-Mazur}  shows that $\phi$ restricts to a homology equivalence between the connected components of the identity elements
(namely the element in  $\colim_\mathcal{B} S_k A$ represented by $(0, c_0)$ and the element in  $c_0 \in \Gr (S_0 A)$).
We will show that the domain and range of $\phi$ are group-like topological abelian monoids, with $\phi$ inducing an isomorphism between their groups of connected components.  Then $\phi$ is a homology isomorphism between simple spaces, hence a weak equivalence~\cite[Proposition 4.74, Example 4A.3]{Hatcher}.

Since  $|\Gr (S. A)|$ is the realization of a simplicial abelian group, it is a topological abelian group. 
Next, we claim that $\colim_\mathcal{B} S. A$ has the structure of a simplicial abelian monoid, so that 
its geometric realization is a topological abelian monoid. For each $k$, the  addition map $A\cross A \to A$ induces a map 
$$ (S_k A) \cross (S_k A) \isom S_k (A\cross A)\maps S_k A$$
 and since $A$ is (strictly) abelian, there is a well-defined map
$$\colim_\mathcal{B} S_k A \cross \colim_\mathcal{B} S_k A \maps \colim_A S_k A,$$
induced by 
$$(b, \sigma)+(b', \sigma') = (b+b', \sigma+\sigma').$$ 
 This map makes $\colim_\mathcal{B} S_k A$ a monoid (with identity element represented by $(0, c_0)$). Moreover, the simplicial  structure maps in $\colim_\mathcal{B} S. A$ are monoid homomorphisms.  

The map 
\begin{equation}\label{pi0-ab}\pi_0 |\colim_\mathcal{B} S. A| \to \pi_0 |\Gr (S. A)|\end{equation}
is induced by a map of simplicial monoids, so it is a monoid homomorphism. 
We have natural isomorphisms of monoids
$$\pi_0 |\colim_\mathcal{B} S. A|    \isom \colim_\mathcal{B} \pi_0 |S. A| \isom  \colim_\mathcal{B} \pi_0 A
\underset{\isom}{\overset{f}{\maps}}
\Gr (\pi_0 A),$$
where $f$ sends the point in $ \colim_\mathcal{B} \pi_0 A$ represented by $(b, [a])$ to $[a]-[t(b)]\in \Gr (\pi_0 A)$.
Hence the domain of (\ref{pi0-ab}) is a group.
Let
$$g\co \Gr (\pi_0 A) \srm{\isom} \pi_0 \Omega BA$$
be the natural isomorphism sending $[a'] - [a]$ to the path component of the loop $\gamma(a') \cc \ol{\gamma(a)}$. 
Letting $h$ denote the bijection $\pi_0 (\Omega BA)\srt{\isom} \Gr (S. A)$ induced by the zig-zag of weak equivalences (\ref{zz}) constructed above
tracing through the  definitions (and applying Lemma~\ref{loops}) shows that (\ref{pi0-ab}) agrees with the bijection $h\circ g\circ f$. 
This shows that (\ref{pi0-ab}) is an isomorphism of groups, completing the proof.
\end{proof}

\begin{corollary} Let $A$ be a proper,   topological  abelian monoid with base system $\mathcal{B}$. Then for each $k\geqs 1$, there is a natural isomorphism
$$\pi_k (\Omega BA) \isom \colim_{b\in F(\mathcal{B})} \pi_k (A, t(b)) \isom [S^k, A]/\pi_0 A.$$
\end{corollary}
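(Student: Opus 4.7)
The plan is to deduce both isomorphisms from Theorem~\ref{Ab-thm} together with Proposition~\ref{univ}. For the first isomorphism, I would apply $\pi_k$ to the weak equivalence $\colim_\mathcal{B}|S. A| \heq \Omega BA$ furnished by Theorem~\ref{Ab-thm}. Since homotopy groups commute with filtered colimits of simplicial sets (as noted at the start of the appendix) and the level-wise map $|S. A| \to A$ is a weak equivalence, this yields
$$\pi_k(\Omega BA) \isom \colim_\mathcal{B}\pi_k(A, t(b)).$$
The basepoint at stage $b \in F(\mathcal{B})$ is $t(b)$: the structure map $\iota_b\co |S. A| \to \colim_\mathcal{B}|S. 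A|$ satisfies $\iota_b(t(b)) = \iota_0(0)$, because the transition map from stage $0$ to stage $b$ (labeled by $b$) sends $0 \in A$ to $t(b)$, so $\iota_b \circ ({\bt t(b)}) = \iota_0$.

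For the second isomorphism, I would define the natural forgetful map
$$\alpha\co \colim_\mathcal{B}\pi_k(A, t(b)) \maps [S^k, A]/\pi_0 A, \qquad [(b, [\phi])] \goesto [\phi].$$
This is well defined because the transition maps multiply by constants, which vanish modulo $\pi_0 A$. Surjectivity is straightforward: given $[\psi] \in [S^k, A]/\pi_0 A$, choose the unique $b \in \mathcal{B}$ with $[\psi(1)] = [t(b)]$ in $\pi_0 A$ and, using non-degeneracy of $1 \in S^k$, adjust $\psi$ by an unbased homotopy so that $\psi(1) = t(b)$; then $\psi$ represents a class in $\pi_k(A, t(b))$ mapping to $[\psi]$.

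The main obstacle will be injectivity of $\alpha$. Given $\phi_b \in \pi_k(A, t(b))$ with $\alpha([(b, [\phi_b])]) = 0$, Proposition~\ref{univ} supplies constants $c, d\co S^k \to A$ with $\phi_b \bt c \heq d$ unbased, and I would arrange $c = c_{t(\beta_c)}$ and $d = c_{t(\beta_d)}$ for $\beta_c, \beta_d \in \mathcal{B}$ by replacing $c, d$ with unbased-homotopic constants whose values lie in $t(\mathcal{B})$. Multiplying this unbased homotopy on both sides by $c_{t(\beta_d)}$ produces
$$\phi_b \bt c_{t(\beta_c+\beta_d)} \heq c_{t(2\beta_d)} \quad \text{(unbased)},$$
whose left-hand side is the image of $\phi_b$ at stage $b+\beta_c+\beta_d$, based at $t(b+\beta_c+\beta_d)$. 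By Lemma~\ref{act}, the based class of $\phi_b \bt c_{t(\beta_c+\beta_d)}$ in $\pi_k(A, t(b+\beta_c+\beta_d))$ equals the action of the basepoint-track path on $[c_{t(2\beta_d)}] = 0$, which is $0$ because the path-action preserves the identity element. Hence $\phi_b$ vanishes at stage $b+\beta_c+\beta_d$ in the colimit, proving injectivity. The delicate point is converting the \emph{unbased} nullhomotopy supplied by Proposition~\ref{univ} into a \emph{based} triviality statement by passing sufficiently far in the colimit.
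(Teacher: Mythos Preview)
Your proposal is correct and follows essentially the same route as the paper: deduce the first isomorphism from Theorem~\ref{Ab-thm} by commuting $\pi_k$ with the filtered colimit, then define the forgetful map $\colim_{b} \pi_k(A, t(b)) \to [S^k, A]/\pi_0 A$ and check bijectivity directly.

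Your injectivity argument, however, is more elaborate than needed and invokes Lemma~\ref{act} imprecisely. Once you have $\phi_b \bt c_{t(\beta_c)} \heq c_{t(\beta_d)}$ (unbased), you are already done: $\phi_b \bt c_{t(\beta_c)}$ is unbased homotopic to a constant, hence extends over $D^{k+1}$, hence is \emph{based} nullhomotopic in $\pi_k(A, t(b+\beta_c))$; and this is precisely the image of $\la\phi_b\ra$ at stage $b+\beta_c$ in the colimit. The further multiplication by $c_{t(\beta_d)}$ buys nothing. Moreover, Lemma~\ref{act} as stated requires the basepoint track $\eta$ to be a \emph{loop} at $x_0$, whereas in your situation the endpoints $t(b+\beta_c+\beta_d)$ and $t(2\beta_d)$ differ; what you actually need is the change-of-basepoint isomorphism along a path. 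The paper's proof bypasses this entirely via the elementary fact that a based map out of a sphere which is unbased nullhomotopic is automatically based nullhomotopic, and you should do the same.
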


The colimit above refers to the functor from $F(\mathcal{B})$ to groups sending $b\in F(\mathcal{B})$ to $\pi_k (A, t(b))$, and sending each morphism labeled by $c$ to the map induced by multiplication by $t(c)$.

\begin{proof} The first isomorphism follows from Theorem~\ref{Ab-thm}. Indeed,
letting $|c_a|$ denote the point in $|S.A|$ corresponding to the $0$--simplex at $a\in A$,
 we have
$$\pi_k (\colim_{b\in F(\mathcal{B})} |S. A|, [(0, |c_0|)]) \isom \colim_{b\in F(\mathcal{B})} \pi_k (|S. A|, |c_{t(b)}|) \isom \colim_{b\in F(\mathcal{B})} \pi_k (A, t(b)),$$
where the second isomorphism follows from the fact that the weak equivalence $|S. A|\to A$ sends $|c_a|$ to $a$ and induces a map between colimits.
(Note that here we are taking a colimit in the category of groups.)

Consider the map
$$\psi\co  \colim_{b\in F(\mathcal{B})} \pi_k (A, t(b)) \maps  [S^k, A]/\pi_0 A$$
defined by  $\psi(b, \la \alpha \ra) = [\alpha]$. This map is well-defined, and Lemma~\ref{EH} shows that it is a monoid homomorphism. Surjectivity of $\psi$ follows from the fact that  every $\alpha\co S^k \to A$ is homotopic to a map $\alpha'$ with $\alpha'(1)\in \mathcal{B}$. To check injectivity of $\psi$, note that if $[\alpha]$ is zero in $[S^k, A]/\pi_0 A$, then there exists a constant map $c$ such that $\alpha+c$ is nullhomotopic. Now  $[c] = [c']$ for some $c'\in \mathcal{B}$, and 
$$(\alpha(1), \la \alpha \ra) \equiv (\alpha(1)+c', \la \alpha + c'\ra)$$
in $\colim_{b\in F(\mathcal{B})} \pi_k (A, t(b))$.
Since $\alpha+c' \heq \alpha+c$ is nullhomotopic, we have $\la \alpha + c'\ra = 0\in \pi_k (A, \alpha(1)+c')$.
\end{proof}

 \section{Connected components of the homotopy group completion}\label{Appendix}
 
We give a short proof of the following  well-known result.
    
 \begin{proposition} Let $M$ be a topological monoid. 
Then
the natural map $\gamma\co M\to \Omega BM$ induces an isomorphism 
$$\Gr(\pi_0 M)  \srm{\isom} \pi_0 (\Omega BM)$$
\end{proposition}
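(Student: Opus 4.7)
The plan is to identify $\pi_0(\Omega BM)$ with $\pi_1(BM, *)$ and then compute the latter directly from the cell structure of the bar construction.

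First, I would use the loop--space adjunction $\pi_0(\Omega BM) \isom \pi_1(BM, *)$; under this identification, $\gamma_*$ sends $m \in M$ to the class of the loop $\ell_m$ coming from the $1$--cell associated with $m \in N_1 \underline{M} = M$. The face maps of the non-degenerate $2$--cells (parametrized by $M^2 = N_2 \underline{M}$) impose $\ell_m \cc \ell_n \heq \ell_{mn}$, so $\pi_0(\gamma)$ is a monoid homomorphism from $\pi_0 M$ into the group $\pi_1(BM, *)$ and therefore factors uniquely through a homomorphism $\Gr(\pi_0 M) \to \pi_1(BM, *)$; this is the map whose bijectivity we need to establish.

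To construct the inverse, I would analyze the skeletal filtration of the thick realization $BM$. Every based loop is homotopic into the $1$--skeleton $BM^{(1)}$ and thus represented by a word in the $1$--cells $\ell_m^{\pm 1}$; since a path from $m$ to $m'$ in $M$ yields a based homotopy $\ell_m \heq \ell_{m'}$ (via the $1$--parameter family of $1$--cells the path traces out, filled in by the product structure on $M\times I \to BM$), this word descends to a word in $\pi_0 M$. The nondegenerate $2$--cells impose precisely the relations $[m][n] = [mn]$, while the degenerate $2$--cells coming from $s_0(m)$ and $s_1(m)$ contribute only the tautological relations $[e][m] = [m][e] = [m]$ together with $[e]=1$. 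The resulting quotient presentation is exactly that of $\Gr(\pi_0 M)$, so one obtains a well-defined homomorphism $\pi_1(BM, *) \to \Gr(\pi_0 M)$ which is directly checked to be two-sided inverse to $\gamma_*$.

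The main obstacle is making the cellular-approximation arguments rigorous for the thick realization (which is not literally a CW complex but has an analogous skeletal structure) and bookkeeping the orientations of the $2$--cell attaching maps to confirm that the relation obtained is the one claimed. Both steps are standard but require some diagrammatic care; a cleaner alternative would be to reduce to the case of a discrete monoid via the map $BM \to B(\pi_0 M)$ induced by the monoid map $M \to \pi_0 M$, and then invoke the well-known computation that $\pi_1 BN = \Gr(N)$ for any discrete monoid $N$ (which can itself be proved via the CW presentation described above, but now with genuine CW cells).
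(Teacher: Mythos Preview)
Your approach is essentially the same as the paper's: identify $\pi_0(\Omega BM)$ with $\pi_1(BM)$ and read off the latter from the low-dimensional structure of the bar construction. The difference lies in how each handles the obstacle you identify, namely that $||N.\underline{M}||$ is not literally a CW complex. The paper resolves this by replacing $M$ with its singular simplicial set $S.M$: the natural map $||S.M||\to M$ is a weak equivalence, and applying the bar construction level-wise yields a bisimplicial \emph{set} $[p,q]\mapsto (S_p M)^q$ whose realization is a genuine CW complex weakly equivalent to $BM$. The $\pi_1$ computation then proceeds exactly as you sketch, but with honest cells: $1$--cells are singular paths $\sigma\in S_1 M$, $2$--cells are pairs $(\tau,\nu)\in S_2(M\times M)$ giving the relation $\lra{d_2\tau}\lra{d_0\nu}=\lra{(d_1\tau)\bullet(d_1\nu)}$, and one shows $\lra{\sigma}=\lra{\sigma(0)}$ by exhibiting an explicit $2$--cell built from a nullhomotopy of $\sigma\cc\ol{\sigma}$.

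Your ``cleaner alternative'' via $B(\pi_0 M)$ is also valid, but you have left unjustified its key step: that $BM\to B(\pi_0 M)$ is a $\pi_1$--isomorphism. This does not follow formally from $M\to\pi_0 M$ being a $\pi_0$--bijection (constant simplicial spaces give counterexamples to such an implication). It \emph{does} follow from the standard skeletal connectivity estimate for maps of simplicial spaces: since $M^n\to(\pi_0 M)^n$ is $0$--connected for $n\geqs 1$ and an isomorphism for $n=0$, the map on thick realizations is $2$--connected, hence an isomorphism on $\pi_1$. Once you supply this, your alternative works; the paper's route via $S.M$ simply trades this connectivity lemma for the more routine fact that level-wise weak equivalences of simplicial spaces realize to weak equivalences.
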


As in the body of the paper, we work in the category of compactly generated spaces, and we form $BM$ using the thick geometric realization.
However, if $M$ is not compactly generated, the identity map induces an isomorphism
 $\Gr(\pi_0 M)\isom \Gr(\pi_0 (k(M)))$, where $k(M)$ denotes $M$ with the compactly generated topology, so the isomorphism
 $$\Gr (\pi_0 M) \isom  \pi_0 (\Omega B k(M))$$
 holds even when $M$ is not compactly generated.

 \begin{proof}
Following the method in Section~\ref{sec:proof}, we consider the commutative diagram
$$
\xymatrix{
M\ar[r]^-\gamma & \Omega BM \\
||S. M|| \ar[r]^-\gamma \ar[u]^\heq & \Omega B ||S.M||. \ar[u]^\heq
}
$$
The vertical map on the left is a weak equivalence by~\cite[Lemma 1.11]{ERW}.
The vertical map on the right is a weak equivalence by~\cite[Theorem 2.2]{ERW},  because it is induced by a level-wise weak equivalence (here it is important that we use the thick realization if $M$ is not proper).
After applying the functor $\Gr (\pi_0 (-))$ to the above diagram, we see that it suffices to show that the map
$$\gamma_* \co \Gr ( \pi_0 ||S. M|| ) \maps \pi_0 ( \Omega B S.M)\isom \pi_1 B S.M$$
induced by $\gamma$ is an isomorphism. The inclusion of the constant simplices into $S.M$ induces an isomorphism $\pi_0 M\xmaps{\isom} \pi_0  ||S.M||$, so we just need to show that the induced map
\begin{equation}\label{eq:app}g\co \Gr (\pi_0 M)\maps \pi_1  B S.M\end{equation}
is an isomorphism. 

As in Section~\ref{sec:proof}, we view $B S.M$ as the diagonal of a bisimplicial set. As such,  $B S.M$ has a single 0--simplex; its set of  1--simplices is $S_1 M$; and its set of two--simplices is $S_2 (M\cross M) \isom S_2  M \cross S_2 M$. 
The associated CW structure on $B S.M$ yields a presentation  for $\pi_1  B S.M$ with the 1--simplices as generators 
and the 2--simplices as relations. We will simply write $\lra{\sigma}$ for the element in $\pi_1 B S.M$ associated to the singular 1--simplex 
$$\sigma \co [0,1]\to M$$
 in $S_1 M$. Each 2--simplex is represented a pair of singular 2--simplices 
 $$\tau, \nu \co \Delta^2\to M,$$ 
 and  the 2--simplex
$(\tau, \nu)$  
yields the relation 
$$\lra{d_2 \tau}\lra{d_0 \nu} = \lra{(d_1 \tau)\bullet (d_1 \nu)},$$
where the $d_i$ are the face operators in the simplicial set $S. M$ and
$\bullet$ denotes the point-wise product of maps into $M$.

Let $F(S_1 M)$ denote the free group on the set $S_1 M$.
Consider the homomorphism
$$f\co F(S_1 M) \maps \Gr (\pi_0 M)$$
defined by $f(\sigma) = [\sigma(0)]$. 
Since the 2--simplex is path connected, we see that $f$ sends all relators
$(d_2 \tau)\cdot (d_0 \nu)\cdot \left((d_1 \tau)\bullet (d_1 \nu)\right)^{-1}$ to the identity. Hence
 $f$ descends to a homomorphism out of $\pi_1 B S.M$ (which we continue to denote by $f$). Moreover, letting $u\co \pi_0 M \to \Gr (\pi_0 M)$ denote the the universal map from $\pi_0 M$ to its group completion, we see that
 $$\pi_0 M \srm{u} \Gr (\pi_0 M) \srm{g} \pi_1 B S.M \srm{f} \Gr (\pi_0 M)$$
is simply $u$, and the universal property of $u$ implies that $f\circ g$ must be the identity map on $\Gr (\pi_0 M)$.
 
 To complete the proof, we will show that $g\circ f$ is the identity on $\pi_1 B S.M$. For each generator $\sigma\in S_1 M$ we have 
 $g\circ f ([\sigma]) = \lra{\sigma(0)}$, where $\sigma (0)$ is the constant map $[0,1]\to M$ with value $\sigma (0)$. 
 We must show that $\lra{\sigma}=\lra{\sigma(0)}$ in $\pi_1 B S.M$.
Let $\ol{\sigma}$ denote the reverse of $\sigma$ (so $\ol{\sigma}(t) = \sigma(1-t)$).
 A based nullhomotopy of the loop $\sigma \cc \ol{\sigma}$ yields a singular 2--simplex $\tau\co \Delta \to M$ with $d_0 \tau = \ol{\sigma}$, $d_2 (\tau) = \sigma$, and $d_1 (\tau) = \sigma(0)$. Now
 consider the
 2--simplex $(\tau, s_0 \ol{\sigma})\in S_2 (M\cross M)$, where $s_0$ denotes the degeneracy operator in $S. M$.
This yields the relation $\lra{\sigma}\lra{\ol{\sigma}} = \lra{\sigma(0) \bullet \ol{\sigma}}$ in $\pi_1 B S.M$. Since  $\pi_1  B S.M$ is a group, to complete the proof it  suffices to show that $\lra{\sigma(0) \bullet \ol{\sigma}} = \lra{ \sigma(0)}\lra{\ol{\sigma}}$. 
More generally, for any $\alpha, \beta\in S_1 M$, the relation $\lra{\alpha\bullet \beta}= \lra{\alpha}\lra{\beta} $
is witnessed by the 2--simplex $(s_1 \alpha, s_0 \beta) \in S_2 (M\cross M)$.
\end{proof}
 
\def\cprime{$'$}

\end{document}